\newtheorem{theorem}{Theorem}[section]
\newtheorem{lemma}[theorem]{Lemma}
\newtheorem{proposition}[theorem]{Proposition}
\theoremstyle{definition}
\newtheorem{remark}[theorem]{Remark}
\newtheorem{definition}[theorem]{Definition}
\newtheorem{example}[theorem]{Example}
\newcommand{\R}{{\mathbb{R}}}
\newcommand{\C}[1]{\mathbf{C^{#1}}}
\renewcommand{\L}[1]{\mathbf{L^{#1}}}
\newcommand{\Lloc}[1]{\mathbf{L^{#1}_{loc}}}
\newcommand{\tv}{\mathrm{TV}}
\newcommand{\Of}{\Omega_f}
\newcommand{\Oc}{\Omega_c}
\numberwithin{equation}{section}
\author[1]{Edda Dal Santo}
\author[1]{Massimiliano D.\ Rosini}
\author[1]{Nikodem Dymski}
\author[2]{Mohamed Benyahia}
\affil[1]{Instytut Matematyki, Uniwersytet Marii Curie-Sk\l odowskiej,
   \centerline{pl.\ Marii Curie-Sk\l odowskiej~1, 20-031 Lublin, Poland}
   \centerline{dalsantoedda@gmail.com, mrosini@umcs.lublin.pl, ndymski@o2.pl}}
\affil[2]{Gran Sasso Science Institute
   \centerline{Viale F.\ Crispi 7, 67100 L'Aquila, Italy}
   \centerline{benyahia.ramiz@gmail.com}}
\date{}                     
\begin{document}

\setlength{\abovedisplayskip}{1pt}
\setlength{\belowdisplayskip}{1pt}
\setlength{\abovedisplayshortskip}{1pt}
\setlength{\belowdisplayshortskip}{1pt}

\title{General phase transition models for vehicular traffic\\with point constraints on the flow}

\maketitle

\begin{abstract}
We generalize the phase transition model studied in \cite{Colombo}, that describes the evolution of vehicular traffic along a one-lane road. 
Two different phases are taken into account, according to whether the traffic is low or heavy. 
The model is given by a scalar conservation law in the \emph{free-flow} phase and by a system of two conservation laws in the \emph{congested} phase.  
In particular, we study the resulting Riemann problems in the case a local point constraint on the flux of the solutions is enforced. 
\end{abstract}


\section{Introduction} %


This paper deals with phase transition models (PT models for short) of hyperbolic conservation laws for traffic. 
More precisely, we focus on models that describe vehicular traffic along a unidirectional one-lane road, which has neither entrances nor exits and where overtaking is not allowed.

In the specialized literature, vehicular traffic is shown to behave differently depending on whether it is free or congested. 
This leads to consider two different regimes corresponding to a \emph{free-flow phase} denoted by $\Of$ and a \emph{congested phase} denoted by $\Oc$. 
The PT models analyzed here are given by a scalar conservation law in the free-flow phase, coupled with a $2\times 2$ system of conservation laws in the congested phase. The coupling is achieved via \emph{phase transitions}, namely discontinuities that separate two states belonging to different phases and that satisfy the Rankine-Hugoniot conditions.

This two-phase approach was introduced by Colombo in \cite{Colombo} and is motivated by experimental observations, according to which for low densities the flow of vehicles is free and approximable by a one-dimensional flux function, while at high densities the flow is congested and covers a $2$-dimensional domain in the fundamental diagram, see \cite[Figure 1.1]{Colombo}. 
Hence, it is reasonable to describe the dynamics in the free regime by a first order model and those in the congested regime by a second order model. 

Colombo proposed to let the free-flow phase $\Of$ be governed by the classical LWR model by Lighthill, Whitham and  Richards  \cite{LighthillWhitham, Richards}, which expresses the conservation of the number of vehicles and assumes that the velocity is a function of the density alone; on the other hand, the congested phase $\Oc$ includes one more equation for the conservation of a linearized momentum.  
Furthermore, his model uses a Greenshields (strictly parabolic) flux function in the free-flow regime and one consequence is that $\Of$ cannot intersect $\Oc$, see \cite[Remark 2]{Colombo}. 

The two-phase approach was then exploited by other authors in subsequent papers, see \cite{BenyahiaRosini01, BenyahiaRosini02,Blandin,goatin2006aw}. For instance, in \cite{goatin2006aw} Goatin couples the LWR equation for the free-flow phase $\Of$ with the ARZ model formulated by Aw, Rascle and Zhang \cite{AwRascle, Zhang} for the congested phase $\Oc$. 
In the author's intentions such a model has the advantage of correcting the drawbacks of the LWR and ARZ models taken separately.
Recall that this PT model has been recently generalized in \cite{BenyahiaRosini01, BenyahiaRosini02}.
Another variant of the PT model of Colombo is obtained in \cite{Blandin}, where the authors take an arbitrary flux function in $\Oc$ and consider this phase as an extension of LWR that accounts for heterogeneous driving behaviours.

In this paper we further generalize the two PT models treated in  \cite{BenyahiaRosini01,BenyahiaRosini02,goatin2006aw} and \cite{Blandin}.
For more clarity, we refer to the first model as the PT$^p$ model and to the latter as the PT$^a$ model.
We omit these superscripts only when they are not necessary. 
We point out that in \cite{BenyahiaRosini01,BenyahiaRosini02,goatin2006aw} the authors assume that $\Of \cap \Oc = \emptyset$, while in \cite{Blandin} the authors assume that $\Of \cap \Oc \neq \emptyset$.
Here we do not impose any assumption on the intersection of the two phases for both the PT$^p$ and the PT$^a$ models. 
Moreover, in order to avoid the loss of well-posedness of the Riemann problems in the case $\Of \cap \Oc \neq \emptyset$ as noted in \cite[Remark 2]{Colombo}, we assume that the free phase $\Of$ is characterized by a unique value of the velocity, that coincides with the maximal one.

In the next sections, we study Riemann problems coupled with a local point constraint on the flow. More precisely, we analyze in detail two constrained Riemann solvers corresponding to the cases $\Of\cap \Oc\ne \emptyset$ and $\Of\cap \Oc=\emptyset$.
We recall that a local point constraint on the flow is a condition requiring that the flow of the solution at the interface $x=0$ does not exceed a given constant quantity $F$. 
This can model, for example, the presence at $x=0$ of a toll gate with capacity $F$.
We briefly summarize the literature on conservation laws with point constraint recalling that:
\begin{itemize}[leftmargin=*]\setlength{\itemsep}{0cm}%
\item
the LWR model with a local point constraint is studied analytically in \cite{colombogoatinconstraint, Rosini2013Chap6} and numerically in \cite{AndreianovGoatinSeguin, CancesSeguin, ChalonsGoatinSeguin, ColomboGoatinRosiniESAIM2011};
\item
the LWR model with a non-local point constraint is studied analytically in \cite{AndreianovDonadelloRazafisonRosiniMBE2015, AndreianovDonadelloRosiniM3ASS2014} and numerically in \cite{AndreianovDonadelloRazafisonRosiniESAIM2016};
\item
the ARZ model with a local point constraint is studied analytically in \cite{AndreianovDonadelloRosiniM3ASS2016, GaravelloGoatin2011, garavello2016cauchy} and numerically in \cite{AndreianovDonadelloRazafisonRollandRosiniNHM2016};
\item
the PT model \cite{BenyahiaRosini01,goatin2006aw} with a local point constraint is analytically studied in \cite{BenyahiaRosini02}.
\end{itemize}
To the best of our knowledge, the model presented in \cite{BenyahiaRosini02} is so far the only PT model with point constraint.

The paper is organized as follows. 
In Section~\ref{sec:PTmodels} we introduce the PT$^p$ and PT$^a$ models by giving a unified description valid in both cases. 
In particular, we list the basic notations and the main assumptions needed throughout the paper and we give a general definition of admissible solutions to a Riemann problem for a PT model.  
In sections~\ref{sec:inter} and \ref{sec:noninter} we outline the costrained Riemann solver in the case of intersecting phases and non-intersecting phases, respectively. 
Section~\ref{sec:tv} contains some total variation estimates that may be useful to compare the difficulty of applying the two solvers in a wave-front tracking scheme; see \cite{HoldenRisebroWFT} and the references therein.
In Section~\ref{sec:simu01} we apply the PT$^0$ model to compute an explicit example reproducing the effects of a toll gate on the traffic along a one-lane road.
Finally, in Section~\ref{sec:tech} we collect all the proofs of the properties previously stated. 


\section{The general PT models}\label{sec:PTmodels} %


In this section, we introduce the PT models, collect some useful notations and recall the main assumptions on the parameters already discussed in \cite{BenyahiaRosini01,Blandin}. 
See \figurename~\ref{fig:notations} for a picture with the main notations used throughout the article.

\begin{figure}[ht]
\centering{
\begin{psfrags}\scriptsize
      \psfrag{a}[l,t]{}
      \psfrag{b}[l,B]{}
      \psfrag{m}[l,B]{}
      \psfrag{c}[c,B]{$u_+^{f,c}\hphantom{a}$}
      \psfrag{d}[c,B]{$u_-^{f,c}\hphantom{a}$}
      \psfrag{l}[c,b]{$u_1^{f,c}\hphantom{a}$}
      \psfrag{e}[B,c]{}
      \psfrag{h}[B,l]{$V_{f,c}$}
      \psfrag{f}[B,l]{$f$}
      \psfrag{g}[c,c]{$\hphantom{aa}u_+$}
      \psfrag{i}[B,c]{$v(u_+)$}
      \psfrag{n}[c,B]{$\hphantom{a}u_*$}
      \psfrag{o}[l,c]{$\,u_2^+$}
      \psfrag{p}[c,c]{$u_2^-~$}
      \psfrag{q}[c,c]{$u_-$}
      \psfrag{R}[c,B]{$R$}
      \psfrag{s}[c,B]{$\rho_1^0(w_-)$}
      \psfrag{r}[l,B]{$\rho$}
\includegraphics[width=.3\textwidth]{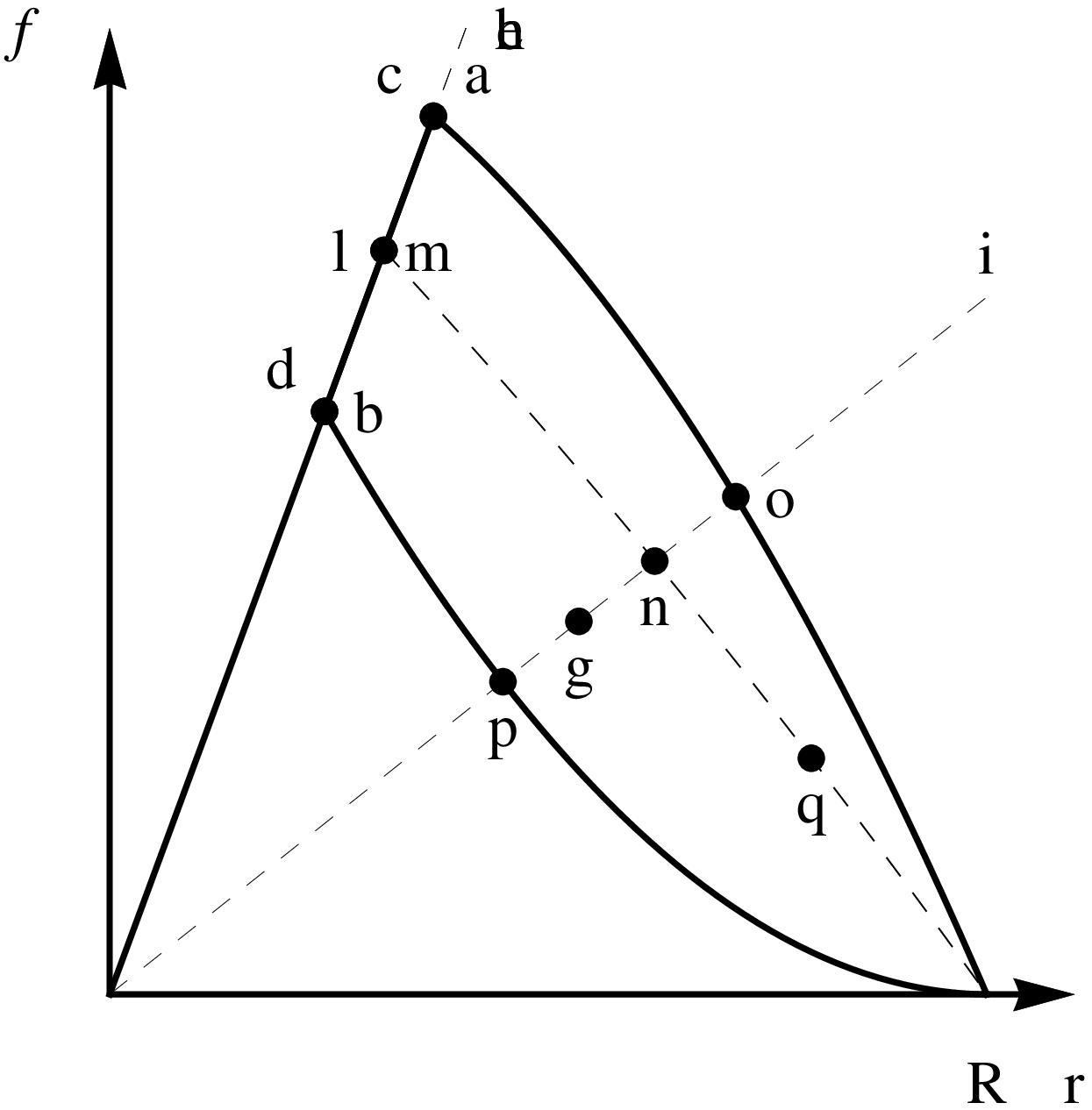}\qquad
      \psfrag{q}[l,t]{$u_-$}
      \psfrag{n}[l,b]{$\hphantom{a}u_*$}
\includegraphics[width=.3\textwidth]{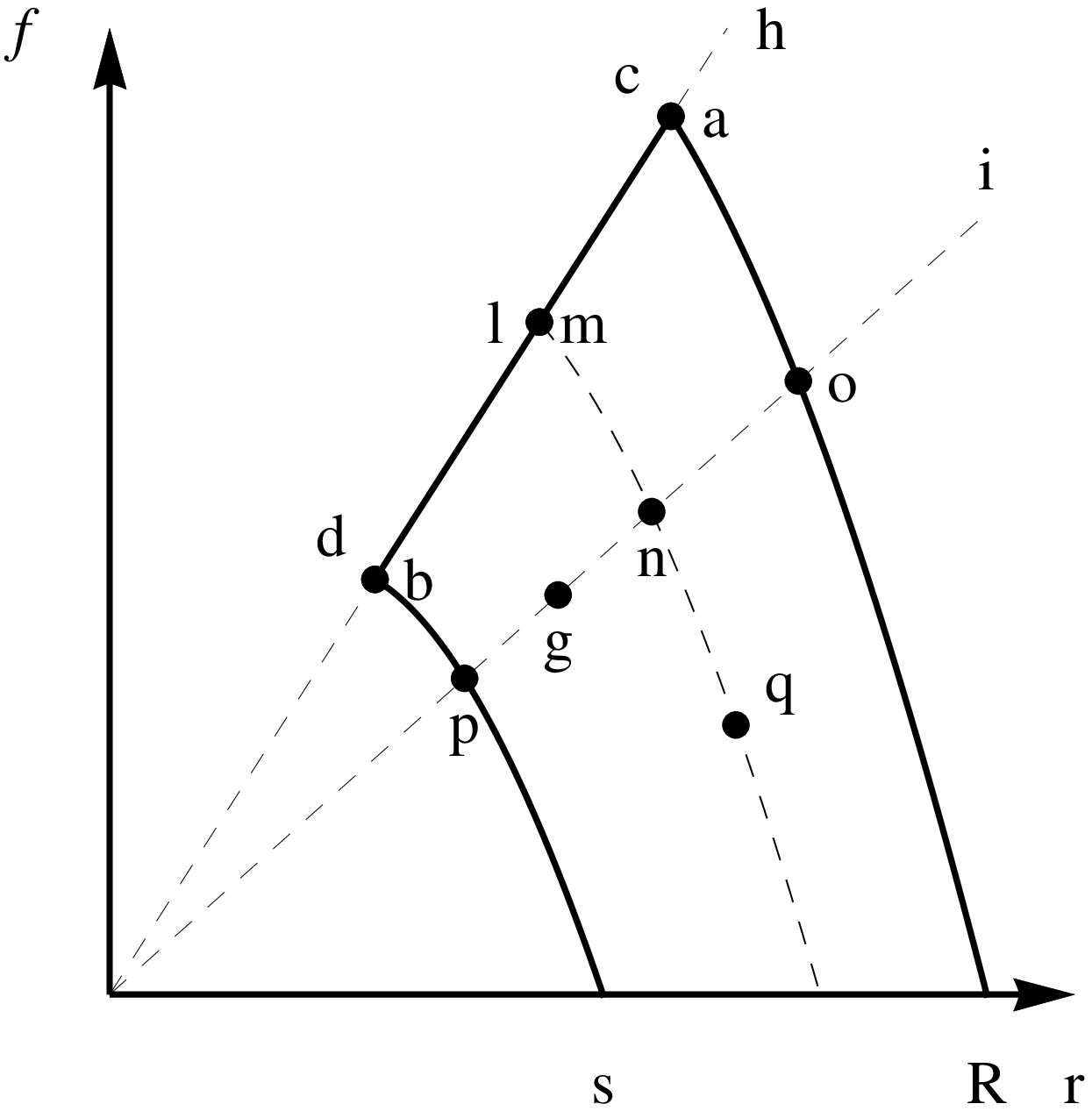}\\
      \psfrag{e}[B,c]{$\hphantom{a}V_c$}
      \psfrag{h}[B,c]{$\hphantom{a}V_f$}
      \psfrag{c}[c,B]{$u_+^f\hphantom{a}$}
      \psfrag{d}[c,B]{$u_-^f\hphantom{a}$}
      \psfrag{l}[c,b]{$u_1^f\hphantom{a}$}
      \psfrag{a}[l,t]{$\,u_+^c$}
      \psfrag{b}[l,B]{$\,u_-^c$}
      \psfrag{m}[l,B]{$\,u_1^c$}
      \psfrag{n}[c,B]{$\hphantom{a}u_*$}
      \psfrag{q}[c,c]{$u_-$}
\includegraphics[width=.3\textwidth]{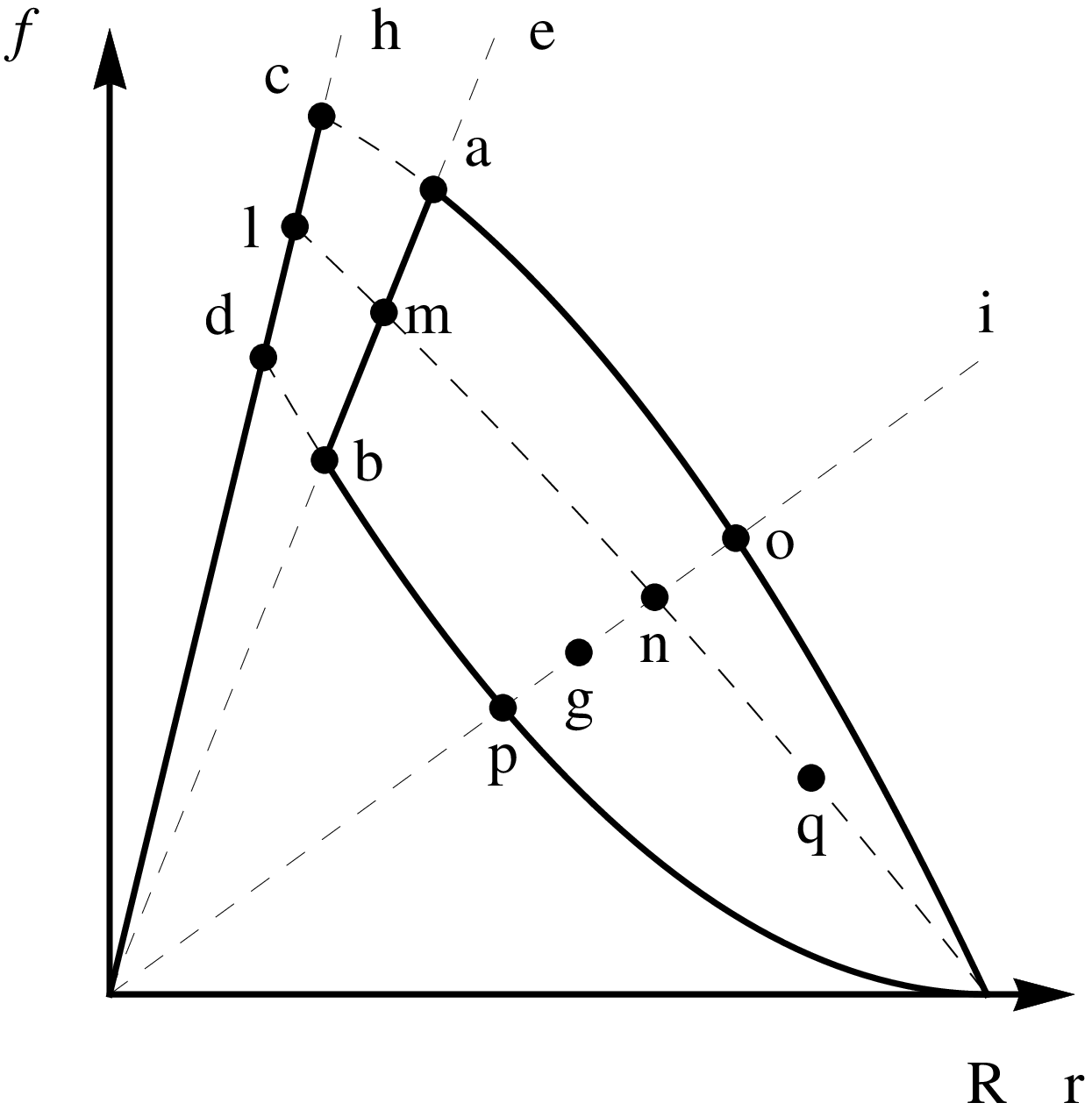}\qquad
      \psfrag{a}[l,B]{$\,u_+^c$}
      \psfrag{q}[l,t]{$u_-$}
      \psfrag{n}[l,b]{$\hphantom{a}u_*$}
\includegraphics[width=.3\textwidth]{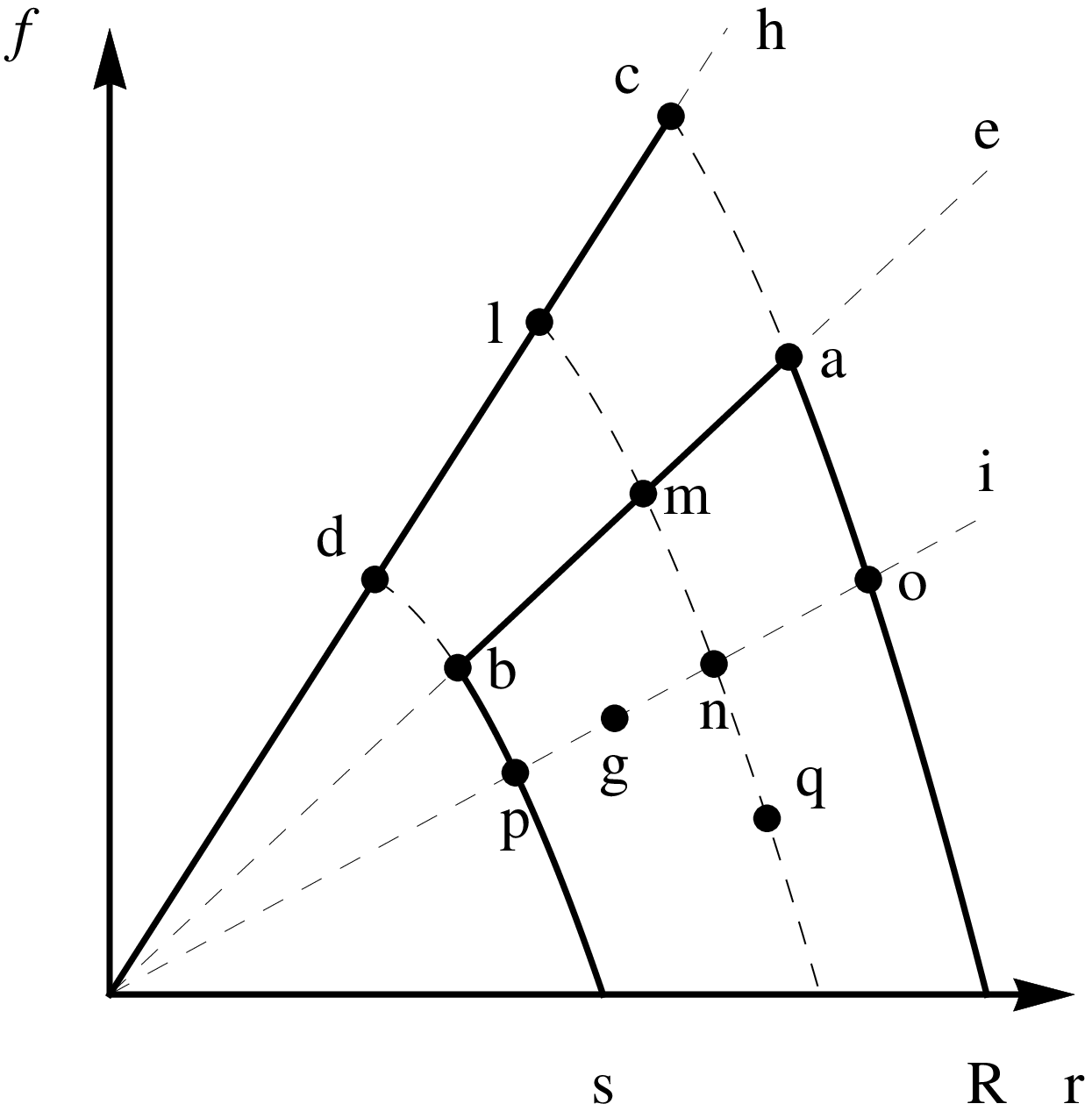}
\end{psfrags}}
\caption{Above we represent in the $(\rho,f)$-plane $u_* \doteq u_*(u_-,u_+)$, $u_\pm^f \doteq (\sigma_\pm^f,w_\pm \, \sigma_\pm^f)$, $u_\pm^c \doteq (\sigma_\pm^c,w_\pm \, \sigma_\pm^c)$, $u_2^\pm \doteq \psi_2^\pm(u_+)$, $u_1^f \doteq \psi_1^f(u_-)$ and $u_1^c \doteq \psi_1^c(u_-)$ defined in Section~\ref{sec:PTmodels}.
The pictures on the left refer to the PT$^a$ model, those on the right to the PT$^p$ model; the pictures in the first line refer to the case $\Of\cap\Oc \ne \emptyset$, namely $V_c = V_f$, and those in the last line to the case $\Of\cap\Oc = \emptyset$, namely $V_c < V_f$.}
\label{fig:notations}
\end{figure}
\noindent
The fundamental parameters that are common to PT$^a$ and PT$^p$ are the following:
\begin{itemize}
\item $V_f > 0$ is the unique velocity in the free phase, namely it is the maximal velocity;
\item $V_c \in (0,V_f]$ is the maximal velocity in the congested phase;
\item $R>0$ is the maximal density (in the congested phase).
\end{itemize}

We rewrite both models as:
\begin{align}\label{eq:system}
&\begin{array}{l}
\text{\textbf{Free flow}}\\[2pt]
\begin{cases}
u \doteq (\rho,q) \in \Of,\\
\rho_t+f(u)_x=0,\\
v(u)=V_f,
\end{cases}
\end{array}
&
\begin{array}{l}
\text{\textbf{Congested flow}}\\[2pt]
\begin{cases}
u \doteq (\rho,q) \in \Oc,\\
\rho_t+f(u)_x=0,\\
q_t + \left[q\,v(u)\right]_x=0.
\end{cases}
\end{array}
\end{align}
Above, $\rho \in [0,R]$ represents the density and $q$ the (linearized) momentum of the vehicles, while $\Of$ and $\Oc$ denote the domains of the free-flow  phase and of the congested phase, respectively. Observe that in $\Of$ the density $\rho$ is the unique independent variable, while in $\Oc$ the independent variables are both $\rho$ and $q$.
Moreover, the (average) speed $v \ge 0$ and the flow $f \ge 0$ of the vehicles are defined as
\begin{align*}
&v(u)\doteq \begin{cases}
v^a(u) \doteq v_{eq}^a(\rho) \, (1+q) &\text{for the PT$^a$ model}, \\[2pt]
v^p(u) \doteq\frac{q}{\rho} - p(\rho) &\text{for the PT$^p$ model},
\end{cases}
&f(u) \doteq \rho \, v(u).
\end{align*}
In the PT$^a$ model, $v_{eq}^a(\rho)$ is the equilibrium velocity defined by
\[
(0,R] \ni \rho \mapsto
v_{eq}^a(\rho) \doteq \left(\dfrac{R}{\rho} - 1\right)\left(\dfrac{V_f \,\sigma}{R-\sigma} + a \, (\sigma - \rho)\right),
\]
where $a \in \R$ and $\sigma \in (0,R)$ are fixed parameters, while the term $(1+q)>0$ is a perturbation which provides a thick fundamental diagram in the congested phase (in accordance with the experimental observations depicted in \cite[Figure 3.1]{Blandin}).
\begin{remark}
Observe that for any $\alpha < 0$ and $V \ge V_f$ we have that $v_{eq}^a$ coincides with the Newell-Daganzo \cite{Daganzo, Newell} velocity $\rho \mapsto \min\{V,\alpha \, (1-R/\rho)\}$ on $[\sigma,R]$ for $a=0$ and $\sigma = R \, (1+V_f/\alpha)^{-1}$.
Moreover, for any $V > V_f$ we have that $v_{eq}^a$ reduces to the Greenshields \cite{Greenshields} velocity $\rho \mapsto V(1-\rho/R)$ for $a=-V/R$ and $\sigma = R \, (1-V_f/V)$.
Observe that $v_{eq}^a(\sigma) = V_f$ by definition.
\end{remark}
\noindent
On the other hand, in the PT$^p$ model we require that $p \colon (0,R] \to \R$ satisfies
\begin{align}\tag{P}\label{P}
    &p \in \C2((0,R];\R),&
    &p'(\rho)>0,&
    &2p'(\rho) + \rho \, p''(\rho)>0&
    \text{for every }\rho \in (0,R].
\end{align}
A typical choice is $p(\rho) \doteq \rho^\gamma$, $\gamma>0$, see \cite{AwRascle}.

Let $\sigma_\pm^f$, $\sigma_\pm^c$ and $q_\pm$ be fixed parameters such that
\begin{align*}
&q_- < q_+,&
&0<\sigma_-^f < \sigma_+^f < R,&
&v(\sigma_\pm^f,\sigma_\pm^f \, q_\pm/R)=V_f,
\\&&
&0<\sigma_-^c < \sigma_+^c<R,&
&v(\sigma_\pm^c,\sigma_\pm^c \, q_\pm/R)=V_c.
\end{align*}
By definition $\sigma_\pm^f \le \sigma_\pm^c$, with the equality holding if and only if $V_f = V_c$.
Then, we can introduce the free and congested domains
\begin{align*}
&\Of \doteq \Bigl\{ u \in [0,\sigma_+^f] \times \R \,:\, q = Q(\rho)\Bigr\},&
&\Oc \doteq 
\Bigl\{ u \in [\sigma_-^c,R] \times \R \,:\, 0\le v(u)\le V_c,\, w_- \le \dfrac{q}{\rho} \le w_+ \Bigr\},
\end{align*}
where $w_\pm \doteq q_\pm/R $ and
\[
Q(\rho)\doteq
\begin{cases}
\dfrac{(\rho-\sigma)[V_f R+a(R-\rho)(R-\sigma)]}{(R-\rho)[V_f\sigma +a \, (\sigma-\rho)(R-\sigma)]} &\text{for the PT$^a$ model},\\[10pt]
\rho \left[V_f+p(\rho)\right]	&\text{for the PT$^p$ model}.
\end{cases}
\]
Observe that $v(u) = V_f$ for any $u \in \Of$ and $w_\pm= Q(\sigma_\pm^c)/\sigma_\pm^c=Q(\sigma_\pm^f)/\sigma_\pm^f$.
Moreover,
\[
u_-^c \doteq (\sigma_-^c, w_- \, \sigma_-^c)
\]
is the point in $\Oc$ with minimal $\rho$-coordinate.
Furthermore, we denote $\Omega \doteq \Of \cup \Oc$ and
\begin{align*}
&\Of^-\doteq \bigl\{u\in \Of \,:\, \rho \in [0,\sigma_-^f) \bigr\},&
&\Of^+\doteq \bigl\{u\in \Of \,:\, \rho \in [\sigma_-^f,\sigma_+^f] \bigr\},
\\
&\Oc^- \doteq \Oc\setminus\Of^+,&
&\Oc^{ex} \doteq \bigl\{ u \in (0,R] \times \R \,:\, v(u) \in [0, V_f] ,\ w(u) \in [w_-,w_+]\bigr\},
\end{align*}
where
\[
w(u) \doteq
\begin{cases}
q/\rho&\text{if }u \in \Oc^{ex},
\\
w_-+V_f \left(\dfrac{\rho}{\sigma_-^f}-1\right) &\text{if }u \in \Of^- .
\end{cases}
\]
We point out that
\begin{align*}
&V_c = V_f&&\Rightarrow&
&\Of\cap\Oc = \Of^+,&
&\Oc^- \subset \Oc,&
&\Oc^{ex} = \Oc,
\\
&V_c < V_f&&\Rightarrow&
&\Of\cap\Oc = \emptyset,&
&\Oc^- = \Oc,&
&\Oc^{ex} \supset \Oc\cup\Of^+.
\end{align*}

\begin{remark}
Observe that in the congested phase $w$ is a Lagrangian marker, since it satisfies $w(u)_t + v(u) \, w(u)_x = 0$ as long as the solution $u$ to \eqref{eq:system} attains values in $\Oc$.
\end{remark}

We introduce functions that are practical in the definition of the Riemann solvers given in the next sections:
\begin{align*}
&\psi_1^{f,c} \colon (\Oc\cup\Of^+) \to (\Oc\cup\Of^+),&
&u=\psi_1^{f,c}(u_o):
\begin{cases}
w(u)=w(u_o),\\
v(u)=V_{f,c},
\end{cases}
\\
&\rho_1^0 \colon [w_-,w_+] \to (0,R],&
&\rho_1^0(w) \doteq
\begin{cases}
R &\text{for the PT$^a$ model}, \\
p^{-1}(w) &\text{for the PT$^p$ model},
\end{cases}
\\
&\psi_2^\pm \colon \Omega \to (\Oc\cup\Of^+),&
&u=\psi_2^\pm(u_o):
\begin{cases}
w(u)=w_\pm,\\
v(u)=v(u_o),
\end{cases}
\\
&u_*\colon(\Oc\cup\Of^+)^2 \to (\Oc\cup\Of^+),&
&u=u_*(u_-,u_+) :\begin{cases}
w(u) = w(u_-),\\
v(u) = v(u_+),
\end{cases}
\\
&\Lambda \colon \left\{(u_-,u_+) \in \Omega^2 \,:\, \rho_- \neq \rho_+\right\} \to \R,&
&\Lambda(u_-, u_+) \doteq \frac{f(u_+)-f(u_-)}{\rho_+-\rho_-}.
\end{align*}
We underline that $\rho_1^0(w_+) = R$ both for the PT$^a$ and the PT$^p$ models.
Observe that, according to the definition of the Lax curves given in the next section, the above functions have the following geometrical meaning, see Figure~\ref{fig:notations}:
\begin{itemize}[leftmargin=*]\setlength{\itemsep}{0cm}%
\item
the point $\psi_1^{f,c}(u_o)$ is the intersection of the Lax curve of the first family passing through $u_o$ and $\{u \in \Omega \,:\, v(u) = V_{f,c}\}$;
\item
for any $w \in [w_-,w_+]$ the point $(\rho_1^0(w), \rho_1^0(w) \, w)$ is the intersection of the Lax curve of the first family corresponding to $w$ and $\{u \in \Oc \,:\, v(u) = 0\}$;
\item
the point $\psi_2^\pm(u_o)$ is the intersection of the Lax curve of the second family passing through $u_o$ and $\{u \in \Omega \,:\, w(u) = w_\pm\}$;
\item
the point $u_*(u_-,u_+)$ is the intersection between the Lax curve of the first family passing through $u_-$ and the Lax curve of the second family passing through $u_+$;
\item
$\Lambda(u_-, u_+)$ is the Rankine-Hugoniot speed connecting two states $u_-,u_+$, namely it is the slope of the segment connecting $u_-$ and $u_+$.
\end{itemize}
Finally, we introduce the maps
\begin{align*}
&\rho_1^{f,c} \colon [w_-,w_+] \to [\sigma_-^{f,c},\sigma_+^{f,c}]&
& \text{ and }&
&\rho_2^\pm \colon [0,V_f] \to [\sigma_\pm^f,\rho_1^0(w_\pm)]
\end{align*}
such that $\rho_1^{f,c}(w(u_o))$ and $\rho_2^\pm(v(u_o))$ are respectively the $\rho$-components of $\psi_1^{f,c}(u_o)$ and $\psi_2^\pm(u_o)$.


\subsection{Main assumptions} %


The model consists of a scalar conservation law in the free-flow regime and of a $2\times 2$ system of conservation laws in the congested one. In the free-flow phase the characteristic speed is $V_f$.
Below we collect the eigenvalues, eigenvectors and Riemann invariants for the system in the congested phase:
\begin{align*}
&\lambda_1(u) \doteq v(u) + u \cdot \nabla v(u),&
&\lambda_2(u) \doteq v(u), \\
&r_1(u) \doteq \begin{pmatrix}
\rho\\
q
\end{pmatrix},& 
&r_2(u) \doteq \begin{pmatrix}
 v_q(u)\\
- v_\rho(u)
\end{pmatrix},\\
&w_1(u) \doteq w(u),& 
&w_2(u) \doteq v(u).
\end{align*}
For later use, we consider the natural extension of the above functions to $\Oc^{ex}$.
We observe that $\lambda_2(u) \ge 0$ for all $u\in\Oc^{ex}$ with the equality holding if and only if $\rho = \rho_1^0(w(u))$.
For simplicity, we assume that
\begin{gather}\tag{H1}\label{H1}
\lambda_1(u) < 0 \text{ for all }u\in\Oc^{ex}.
\end{gather}
As a consequence, in the congested phase the waves of the first characteristic family have negative speed and those of the second family have non-negative speed.

Further computations show that
\begin{align*}
&\nabla\lambda_1(u)\cdot r_1(u)=
2 \, u \cdot \nabla v(u) + \rho^2 \, v_{\rho\rho}(u) +2 \, \rho \, q \, v_{\rho q}(u) + q^2 \, v_{qq}(u),
\\
&\nabla\lambda_2(u)\cdot r_2(u)= 0,
\end{align*}
and we can infer that the second characteristic field is linearly degenerate.
Moreover, for simplicity we assume that
\begin{equation}\tag{H2}\label{H2}
\text{the first characteristic field is genuinely nonlinear in $\Oc^{ex}$, except for the PT$^a$ model with }a = 0.
\end{equation}
We point out that for the PT$^0$ model the first characteristic field is genuinely nonlinear except in $\{u \in \Oc^{ex} \,:\, w(u) = 0\}$.
\begin{remark}
Concerning the PT$^p$ model, \eqref{H1} corresponds to require 
\begin{align*}
 &V_f < \rho \, p'(\rho),& &\text{ for every }\rho \in [\sigma_-^f,\sigma_+^f].
\end{align*}
Let us underline that if we ask that $p$ satisfies also $p'(\rho)+\rho\, p''(\rho)>0$ for every $\rho\in[\sigma_-^f,\sigma_+^f]$ as done in \cite{AndreianovDonadelloRazafisonRollandRosiniNHM2016, AndreianovDonadelloRosiniM3ASS2016},
then the above condition reduces to $\sigma_-^f\,p'(\sigma_-^f)>V_f$.
Furthermore, by \eqref{P} we have $\nabla\lambda_1(u)\cdot r_1(u) = -\rho \, \bigl(2p'(\rho) + \rho \, p''(\rho)\bigr) < 0$ for every $\rho \in (0,R]$ and \eqref{H2} easily follows.

On the other hand, for the PT$^a$ model in general \eqref{H1} and \eqref{H2} cannot be easily expressed in terms of the parameters of the model.
Here we just recall from \cite{GaravelloPiccoli} that in the simplest case $a=0$ \eqref{H1} is guaranteed by
\[
-\frac{1}{R} < w_-<0<w_+<\frac{1}{R}.
\]
\end{remark}

In the $(\rho,f)$-plane the Lax curves of the first and second characteristic families passing through a fixed point $u_o\in \Oc^{ex}$ are described respectively by the graphs of the maps
\begin{align*}
&[\rho_1^f(w(u_o)),\rho_1^0(w(u_o))] \ni \rho \mapsto L_{w(u_o)}(\rho) \doteq f(\rho, w(u_o) \, \rho),
&[\rho_2^-(v(u_o)),\rho_2^+(v(u_o))]\ni \rho \mapsto v(u_o) \, \rho.
\end{align*}

\begin{remark}\label{rem:Mars}
We point out that \eqref{H1} and \eqref{H2} can be reformulated in terms of the first Lax curves.
Indeed, since $L_{w}'(\rho)  = \lambda_1(\rho,w\,\rho)$, we have that \eqref{H1} is equivalent to require that the first Lax curves are strictly decreasing, so that the \emph{capacity drop} in the passage from the free phase to the congested phase is ensured.
On the other hand, since $L_{w}''(\rho) = \frac{1}{\rho} \, \nabla\lambda_1(\rho,w\,\rho)\cdot r_1(\rho,w\,\rho)$, we also have that \eqref{H2} is equivalent to require that the first Lax curves are \emph{strictly} concave or convex, except for the one for the PT$^0$ model corresponding to $w=0$.
In particular, since by \eqref{P} $L_w''(\rho) < 0$ for all $\rho\in[\rho_1^f(w),\rho_1^0(w)]$, for the PT$^p$ model we have that the first Lax curves are strictly concave.
\end{remark}


\subsection{The constrained Riemann problem} %


Let us consider the Riemann problem for the PT model, namely the Cauchy problem for \eqref{eq:system} with initial datum
\begin{equation}\label{eq:Rdata}
u(0,x)=\begin{cases}
u_\ell &\hbox{if }x<0,\\ 
u_r &\hbox{if }x>0.
\end{cases}
\end{equation}
We recall the following general definition of solution to \eqref{eq:system},\eqref{eq:Rdata} given in \cite[p.\ 712]{Colombo}.

\begin{definition}\label{def:Colombo}
For any $u_\ell, u_r \in \Omega$, an \emph{admissible} solution to the Riemann problem \eqref{eq:system},\eqref{eq:Rdata} is a self-similar function $u \doteq (\rho, q) \colon \R_+ \times \R \to \Omega$ that satisfies the following conditions.

\begin{itemize}

\item If $u_\ell,u_r\in \Of$ or $u_\ell,u_r\in \Oc$, then $u$ is the usual Lax solution to \eqref{eq:system},\eqref{eq:Rdata} (and it does not perform any phase transition).

\item If $u_\ell\in\Of^-$ and $u_r\in\Oc^-$, then there exists $\Lambda\in \R$ such that:

\begin{itemize}

\item
$u(t,(-\infty,\Lambda\,t)) \subseteq \Of$ and $u(t,(\Lambda\,t,+\infty)) \subseteq \Oc$ for all $t >0$;

\item
the Rankine-Hugoniot jump conditions
\[
\Lambda \, [\rho(t,\Lambda \, t^+) - \rho(t,\Lambda \, t^-) ] = f(u(t,\Lambda \, t^+)) - f(u(t,\Lambda \, t^-))
\]
are satisfied for all $t >0$;

\item 
the functions
\begin{align*}
(t,x) \mapsto \begin{cases}
u(t,x)&\text{if } x<\Lambda \, t,
\\
u(t,\Lambda \, t^-)&\text{if } x>\Lambda \, t,
\end{cases}
&&
(t,x) \mapsto \begin{cases}
u(t,\Lambda \, t^+)&\text{if } x<\Lambda \, t,
\\
u(t,x)&\text{if } x>\Lambda \, t,
\end{cases}
\end{align*}
are respectively the usual Lax solutions to the Riemann problems
\begin{align*}
\begin{cases}
\rho_t + f(\rho)_x=0,\\
v(u) = V_f,\\
u(0,x)=\begin{cases}
u_\ell &\hbox{if }x<0,\\ 
u(t,\Lambda \, t^-) &\hbox{if }x>0,
\end{cases}
\end{cases}
&
\begin{cases}
\rho_t+f(u)_x=0,\\
q_t + \left[q\,v(u)\right]_x=0,\\
u(0,x)=\begin{cases}
u(t,\Lambda \, t^+) &\hbox{if }x<0,\\ 
u_r &\hbox{if }x>0.
\end{cases}
\end{cases}
\end{align*}

\end{itemize}

\item
If $u_\ell\in\Oc^-$ and $u_r\in \Of^-$, conditions entirely analogous to the previous case are required.
\end{itemize}
\end{definition}

We denote by $\mathcal{R}$ and $\mathcal{S}$ the Riemann solvers associated to the Riemann problem \eqref{eq:system},\eqref{eq:Rdata}, respectively in the cases of intersecting and non-intersecting phases.
We point out that these Riemann solvers are defined below according to Definition~\ref{def:Colombo}, in the sense that $(t,x) \mapsto \mathcal{R}[u_\ell,u_r](x/t)$ and $(t,x) \mapsto \mathcal{S}[u_\ell,u_r](x/t)$ are admissible solutions to the Riemann problem \eqref{eq:system},\eqref{eq:Rdata}.

Besides the initial condition \eqref{eq:Rdata}, we enforce a local point constraint on the flow at $x=0$, i.e.\ we add the further condition that the flow of the solution at the interface $x=0$ is lower than a given constant quantity $F \in (0,V_f\,\sigma_+^f)$ and impose
\begin{equation}\label{eq:const}
f(u(t,0^\pm)) \le F.
\end{equation}
In general, \eqref{eq:const} is not satisfied by an admissible solution to \eqref{eq:system},\eqref{eq:Rdata}.
For this reason we introduce the following concept of admissible constrained solution to \eqref{eq:system},\eqref{eq:Rdata},\eqref{eq:const}.  

\begin{definition}\label{def:42}
For any $u_\ell,u_r\in\Omega$, an \emph{admissible constrained} solution to the  Riemann problem \eqref{eq:system}, \eqref{eq:Rdata},\eqref{eq:const} is a self-similar function $u \doteq (\rho, q)\colon \R_+\times\R\to \Omega $ such that $\hat{u}\doteq u(t,0^-)$ and $\check{u}\doteq u(t,0^+)$ satisfy:

\begin{itemize}

\item $f(\hat{u})=f(\check{u})\le F$;

\item the functions
\begin{align*}
(t,x) \mapsto \begin{cases}
u(t,x)&\text{if } x<0,
\\
\hat{u}  &\text{if } x>0,
\end{cases}
&&
(t,x) \mapsto \begin{cases}
\check{u} &\text{if } x<0,
\\
u(t,x)&\text{if } x>0,
\end{cases}
\end{align*}
are admissible solutions to the Riemann problems for \eqref{eq:system} with  Riemann data respectively given by
\begin{align*}
&u(0,x) = \begin{cases}
u_\ell &\text{if } x<0,
\\
\hat{u}&\text{if } x>0,
\end{cases}
&
u(0,x) = \begin{cases}
\check{u} &\text{if } x<0,
\\
u_r&\text{if } x>0.
\end{cases}
\end{align*}

\end{itemize}
\end{definition}

We denote by $\mathcal{R}_F$ and $\mathcal{S}_F$ the constrained Riemann solvers associated to the Riemann problems \eqref{eq:system},\eqref{eq:Rdata},\eqref{eq:const}, respectively in the cases of intersecting and non-intersecting phases.
We point out that these Riemann solvers are defined below according to Definition~\ref{def:42}, in the sense that $(t,x) \mapsto \mathcal{R}_F[u_\ell,u_r](x/t)$ and $(t,x) \mapsto \mathcal{S}_F[u_\ell,u_r](x/t)$ are admissible constrained solutions.

We let (with a slight abuse of notation)
\begin{align*}
&\mathcal{R}_F \doteq \mathcal{R} &\text{ in }&& \mathcal{D}_1 \doteq \{ (u_\ell,u_r) \in \Omega^2 \,:\, f(\mathcal{R}[u_\ell,u_r](t,0^\pm)) \le F \},
\\
&\mathcal{S}_F \doteq \mathcal{S} &\text{ in }&& \mathcal{D}_1 \doteq \{ (u_\ell,u_r) \in \Omega^2 \,:\, f(\mathcal{S}[u_\ell,u_r](t,0^\pm)) \le F \},
\end{align*}
and we denote $\mathcal{D}_2 \doteq \Omega^2 \setminus \mathcal{D}_1$.

In the next sections, we introduce the Riemann solvers and discuss their main properties, such as their consistency, $\Lloc1$-continuity and their invariant domains. 
In this regard, we recall the following definitions. 

\begin{definition}\label{def:cons}
A Riemann solver $\mathcal{T} \colon \Omega^2 \to \L\infty(\R;\Omega)$ is said to be consistent if it satisfies both the following statements for any $u_\ell, u_m, u_r \in \Omega$ and $\bar x \in \R$:
\begin{align}\label{P1}\tag{I}
  \mathcal{T}[u_\ell,u_r](\bar x) = u_m&
  & \Rightarrow &
  &&\begin{cases}
     \mathcal{T}[u_\ell,u_m](x)= 
      \begin{cases}
         \mathcal{T}[u_\ell,u_r](x)& \hbox{if } x < \bar x ,
          \\
          u_m & \hbox{if } x \ge \bar x ,
      \end{cases}
      \\[10pt]
      \mathcal{T}[u_m,u_r](x) =
      \begin{cases}
          u_m & \hbox{if } x < \bar x ,
          \\
         \mathcal{T}[u_\ell,u_r](x)& \hbox{if } x \geq \bar x,
      \end{cases}
      \end{cases}
      \\
      \begin{rcases}
 \mathcal{T}[u_\ell,u_m](\bar x)=u_m 
  \\\label{P2}\tag{II}
  \mathcal{T}[u_m,u_r](\bar x)=u_m
  \end{rcases}&
  & \Rightarrow &
  &&\mathcal{T}[u_\ell,u_r](x)=
      \begin{cases}
     \mathcal{T}[u_\ell,u_m](x)& \hbox{if } x < \bar x ,
      \\
     \mathcal{T}
     [u_m,u_r](x) & \hbox{if } x \geq \bar x .
      \end{cases}
\end{align}
\end{definition}
We point out that the consistency of a Riemann solver is a necessary condition for the well-posedness of the Cauchy problem in $\bf{L^1}$.

\begin{definition}
An invariant domain for $\mathcal{T}$ is a set $\mathcal{I}\subseteq \Omega$ such that $\mathcal{T}[\mathcal{I},\mathcal{I}](\R) \subseteq \mathcal{I}$.  
\end{definition}


\section{The PT models with intersecting phases}\label{sec:inter}


In this section we consider the case in which $\Of \cap \Oc = \Of^+ \ne \emptyset$, namely the maximal velocities for the free phase and congested phase coincide, $V_f = V_c$.
For notational simplicity, we call 
\begin{align*}
&V \doteq V_f = V_c,
&\psi_1 \doteq \psi_1^{f}=\psi_1^{c},&
&\sigma_- \doteq \sigma_-^{f} = \sigma_-^{c}.
\end{align*}
Below we give the definitions of the Riemann solver $\mathcal{R}$ and of the constrained Riemann solver $\mathcal{R}_F$, which are valid for both the PT$^a$ and PT$^p$ models. 

\begin{definition}\label{def:R}
The Riemann solver $\mathcal{R} \colon \Omega^2 \to \L\infty(\R;\Omega)$  associated to the Riemann problem \eqref{eq:system},\eqref{eq:Rdata} is defined as follows.

\begin{enumerate}[label={(R.\arabic*)},leftmargin=*]\setlength{\itemsep}{0cm}%

\item If $u_\ell,u_r\in \Of$, then 
the solution consists of a contact discontinuity from $u_\ell$ to $u_r$ with speed $V$.

\item If $u_\ell,u_r\in\Oc$, then the solution consists of a $1$-wave from $u_\ell$ to $u_*(u_\ell,u_r)$ and of a $2$-contact discontinuity from $u_*(u_\ell,u_r)$ to $u_r$.

\item If $u_\ell\in\Oc^-$ and $u_r\in\Of^-$, then the solution consists of a $1$-wave from $u_\ell$ to $\psi_1(u_\ell)$ and a contact discontinuity from $\psi_1(u_\ell)$ to $u_r$.

\item \label{R4} If $u_\ell\in\Of^-$, $u_r\in \Oc^-$ and $\Lambda(u_\ell,\psi_2^-(u_r))\ge \lambda_1(\psi_2^-(u_r))$, then the solution consists of a phase transition from $u_\ell$ to $\psi_2^-(u_r)$ and a $2$-contact discontinuity from $\psi_2^-(u_r)$  to $u_r$.

\item \label{R4b} If $u_\ell\in\Of^-$, $u_r\in \Oc^-$ and  $\Lambda(u_\ell,\psi_2^-(u_r))< \lambda_1(\psi_2^-(u_r))$, then let $u_p = u_p(u_\ell)$ be the state satisfying $w(u_p)=w_-$ and $\Lambda(u_\ell,u_p
)=\lambda_1(u_p)$. In this case, the solution consists of a phase transition from $u_\ell$ to $u_p$, a $1$-rarefaction from $u_p$ to $\psi_2^-(u_r)$ and a $2$-contact discontinuity from $\psi_2^-(u_r)$ to $u_r$.

\end{enumerate}
\end{definition}

\noindent Notice that $L_{w_-}''(\sigma_-)\le 0$ implies that $\Lambda(u_\ell,\psi_2^-(u_r)) \ge \lambda_1(\psi_2^-(u_r))$ for all $u_\ell\in\Of^-$, $u_r\in \Oc^-$ and, hence, \ref{R4b} never occurs.
In particular, by \eqref{P} this is the case for the PT$^p$ model, see Remark~\ref{rem:Mars}.

The next proposition lists the main properties of $\mathcal{R}$. For the proof we defer to Section~\ref{sec:tec0}.

\begin{proposition}\label{prop:cc0}
The Riemann solver $\mathcal{R}$ is $\Lloc1$-continuous and consistent. 
\end{proposition}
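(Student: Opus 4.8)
The plan is to verify the two properties separately, treating each of the six cases (R.1)–(R.6) of Definition~\ref{def:R} and, crucially, the transitions between cases as the Riemann data cross the boundaries of the regions defining the solver.

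For $\Lloc1$-continuity, I would argue as follows. Fix $(u_\ell,u_r)\in\Omega^2$ and a sequence $(u_\ell^n,u_r^n)\to(u_\ell,u_r)$. Inside the interior of each of the six regions, the wave speeds, the intermediate states (given by the smooth maps $u_*$, $\psi_1$, $\psi_2^-$, $u_p$) and the rarefaction fans depend continuously on the data, so $\mathcal{R}[u_\ell^n,u_r^n]\to\mathcal{R}[u_\ell,u_r]$ in $\Lloc1$ by the standard continuity of Lax solutions for strictly hyperbolic systems with genuinely nonlinear/linearly degenerate fields (here using \eqref{H1}, \eqref{H2} and the fact, recalled after Definition~\ref{def:R}, that (R.6) in fact never occurs for the PT$^p$ model while for PT$^a$ the transition (R.5)$\leftrightarrow$(R.6) happens exactly where $\Lambda(u_\ell,\psi_2^-(u_r))=\lambda_1(\psi_2^-(u_r))$, so the two formulas agree there). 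The nontrivial part is when $(u_\ell,u_r)$ sits on a boundary between regions — e.g.\ $u_\ell\in\Of^-$, $u_r\in\Of^-$ but $u_r^n\in\Oc^-$, or $u_\ell$ on the interface $\partial\Of^-$, or $u_r$ with $w(u_r)=w_-$ so that $\psi_2^-(u_r)=u_r$. For each such boundary I would check that the two adjacent definitions produce solutions that coincide in the limit: typically one wave family degenerates to a zero-strength wave (a phase transition shrinking to a contact, or the $2$-contact disappearing when $w(u_r)=w_-$), so both one-sided limits agree and equal $\mathcal{R}$ at the limit point. This is a finite case check; the main bookkeeping is matching the $1$-wave–phase-transition pair across $V_f=V_c$.

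For consistency, I would verify \eqref{P1} and \eqref{P2} of Definition~\ref{def:cons}. The clean way is to observe that $\mathcal{R}[u_\ell,u_r]$ is always a monotone (in a suitable sense) concatenation of waves, each of which is either a wave of the congested system or the single phase transition, ordered by increasing speed: a $1$-wave (or phase transition) with negative speed, then the $2$-contact with non-negative speed (this ordering uses \eqref{H1}: $\lambda_1<0\le\lambda_2$ on $\Oc^{ex}$, and the phase-transition speed $\Lambda(u_\ell,\cdot)$ is also negative — this should be recorded as a lemma or is implicit in Definition~\ref{def:R}). Given this structure, if $u_m=\mathcal{R}[u_\ell,u_r](\bar x)$ is one of the constant states appearing in the fan, then splitting the fan at $\bar x$ and re-solving the two halves reproduces the same pieces, which is exactly \eqref{P1}; and conversely, if solving $(u_\ell,u_m)$ ends at $u_m$ by speed $\bar x$ and solving $(u_m,u_r)$ starts at $u_m$ by speed $\bar x$, concatenation gives $\mathcal{R}[u_\ell,u_r]$, which is \eqref{P2}. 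The point requiring care is that $u_m$ may fall in the middle of a rarefaction fan (case (R.6)) or may be a state in $\Of^+=\Of\cap\Oc$ that can be read either as free or as congested; in the latter situation one must check that the solver gives the same answer whichever phase label $u_m$ carries, which is where the identifications $\psi_1^f=\psi_1^c$, $\sigma_-^f=\sigma_-^c$ and the agreement of the free contact (speed $V$) with the corresponding congested $1$-wave speed at points of $\Of^+$ are used.

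The main obstacle I anticipate is not any single estimate but the combinatorial completeness of the boundary analysis: one must enumerate every way a triple $(u_\ell,u_m,u_r)$ or a converging sequence can straddle the partition $\{\Of^-,\Of^+,\Oc^-\}$ together with the internal threshold $\Lambda(u_\ell,\psi_2^-(u_r))\gtrless\lambda_1(\psi_2^-(u_r))$, and confirm that the piecewise definition glues continuously and consistently at each seam. I would organize this as a short lemma establishing (i) the speed ordering of the waves in every case and (ii) that adjacent cases in Definition~\ref{def:R} agree on their common boundary (zero-strength degeneration), after which both \eqref{P1}–\eqref{P2} and $\Lloc1$-continuity follow from the general theory of Lax solutions applied piece by piece. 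All of this is carried out in detail in Section~\ref{sec:tec0}.
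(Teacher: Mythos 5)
Your plan coincides in substance with the paper's proof in Section~\ref{sec:tec0}: there, too, $\Lloc1$-continuity is reduced to the configurations involving a phase transition (case \ref{R4}, with the sub-cases $\rho_\ell=0$, $u_r\ne u_p(u_\ell)$ and $u_r=u_p(u_\ell)$ handled via continuity of $\psi_2^-$, $u_p$, $\Lambda$ and $\lambda_1$), and consistency is reduced to the \ref{R4}/\ref{R4b} configurations after observing that no wave can follow a contact discontinuity. One small correction to your speed-ordering remark: the phase-transition speed $\Lambda(u_\ell,\psi_2^-(u_r))$ in \ref{R4} need not be negative (it is positive for $\rho_\ell$ small, as in the waves $\mathsf{PT}_3$, $\mathsf{PT}_4$ of Section~\ref{sec:simu01}), but the inequality you actually need, $\Lambda(u_\ell,\psi_2^-(u_r))\le v(u_r)$, still holds since $v(u_r)\le V_f$.
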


Before introducing the Riemann solver $\mathcal{R}_F$, we observe that in the present case
\[\begin{array}{r@{}c@{\,}l}
\mathcal{D}_1 = &&
\{ (u_\ell , u_r) \in \Of^2 \,:\, f(u_\ell) \le F \}
\cup
\{ (u_\ell , u_r) \in \Oc^2 \,:\, f(u_*(u_\ell,u_r)) \le F \}
\\[2pt]&\cup&
\{ (u_\ell , u_r) \in \Oc^- \times \Of^- \,:\, f(\psi_1(u_\ell)) \le F \}
\\[2pt]&\cup&
\{ (u_\ell , u_r) \in \Of^- \times \Oc^- \,:\, \min\{f(u_\ell),f(\psi_2^-(u_r))\} \le F \}.
\end{array}\]

\begin{definition}\label{def:01}
The constrained Riemann solver $\mathcal{R}_F \colon \Omega^2 \to \L\infty(\R;\Omega)$ associated to \eqref{eq:system},\eqref{eq:Rdata},\eqref{eq:const} is defined as
\[
\mathcal{R}_F[u_\ell,u_r](x)\doteq \begin{cases}
\mathcal{R}[u_\ell,u_r](x) &\hbox{if }(u_\ell,u_r) \in \mathcal{D}_1,\\[5pt]
\begin{cases}
\mathcal{R}[u_\ell,\hat{u}](x) &\hbox{if }x<0,\\
\mathcal{R}[\check{u},u_r](x) &\hbox{if }x>0,
\end{cases}
&\hbox{if }(u_\ell,u_r) \in \mathcal{D}_2,
\end{cases}
\]
where $\hat{u}  = \hat{u}(u_\ell,F) \in \Oc$ and $\check{u}=\check{u}(u_r,F)  \in \Omega$ are uniquely selected by the conditions
\begin{align*}
&f(\hat{u})=f(\check{u})=F,&
&w(\hat{u}) = \max\left\{w(u_\ell),w_-\right\},&
&v(\check{u})=\begin{cases}
V&\hbox{if }f(\psi_2^-(u_r))> F,\\
v_r &\hbox{if }f(\psi_2^-(u_r))\le F.
\end{cases}
\end{align*}
\end{definition}
\noindent
In \figurename~\ref{fig:uhatucheck0} we specify the selection criterion for $\hat{u}$ and $\check{u}$ given above in all the possible cases. 
We point out that $\hat{u}$ and $\check{u}$ satisfy the following general properties.
\begin{equation}\label{gen.properties}
\begin{minipage}{.64\textwidth}
If $(u_\ell,u_r) \in \mathcal{D}_2$, then $w(u_\ell) > w(\check{u})$ and $v(u_r) > v(\hat{u})$.\\
If $(u_\ell,u_r) \in \mathcal{D}_2$ and $u_\ell \in \Of^-$, then $w(\hat{u}) = w_-$.\\
If $(u_\ell,u_r) \in \mathcal{D}_2$ and $u_r \in \Of$, then $v(\check{u}) = V$.
\end{minipage}
\end{equation}

\begin{figure}
    \centering
    \begin{subfigure}[b]{0.24\textwidth}
    \begin{psfrags}
      \psfrag{f}[c,c]{$f$}
      \psfrag{R}[c,B]{$R$}
      \psfrag{r}[c,B]{$\rho$}
      \psfrag{1}[c,B]{$F$}
      \psfrag{0}[l,B]{$u_r$}
      \psfrag{2}[l,B]{$\check{u}$}
      \psfrag{3}[c,B]{$\hat{u}_2$}
      \psfrag{4}[c,B]{$\hat{u}_1$}
      \psfrag{5}[c,B]{$u_\ell^2~$}
      \psfrag{6}[c,B]{$u_\ell^1~$}
      \includegraphics[width=\textwidth]{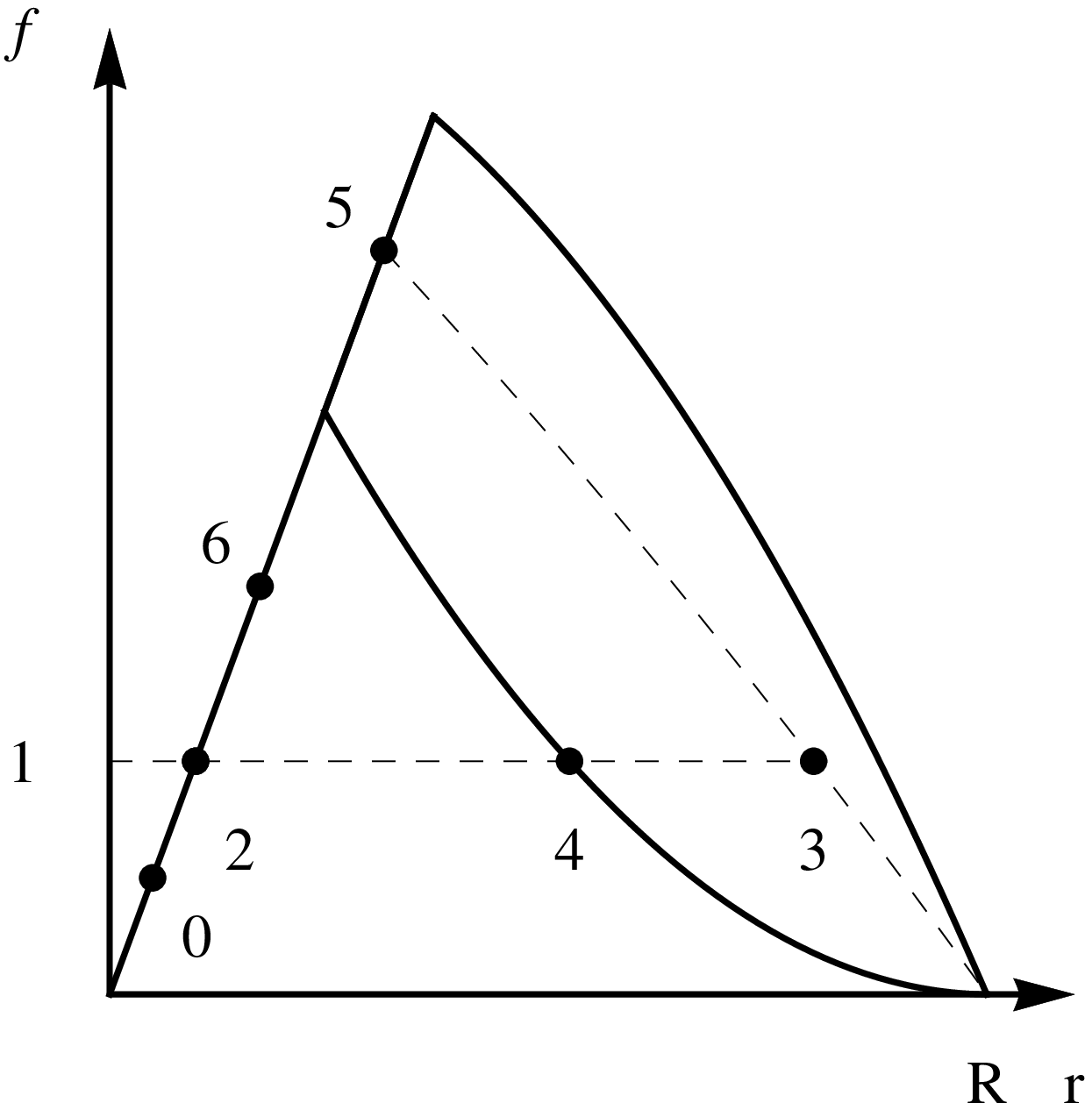}
    \end{psfrags}
        \caption{$(u_\ell,u_r) \in \Of^2$}
    \end{subfigure}
    \begin{subfigure}[b]{0.24\textwidth}
    \begin{psfrags}
      \psfrag{f}[c,c]{$f$}
      \psfrag{R}[c,B]{$R$}
      \psfrag{r}[c,B]{$\rho$}
      \psfrag{1}[c,B]{$F_1$}
      \psfrag{4}[c,B]{$F_2$}
      \psfrag{2}[l,b]{$\check{u}_1$}
      \psfrag{5}[c,b]{$\check{u}_2$}
      \psfrag{3}[c,b]{$\hat{u}_1$}
      \psfrag{6}[l,c]{$\vphantom{\int^{\int^\int}}\hat{u}_2~$}
      \psfrag{7}[c,b]{$u_*~$}
      \psfrag{8}[c,c]{$~u_r$}
      \psfrag{9}[c,B]{$u_\ell~$}
      \includegraphics[width=\textwidth]{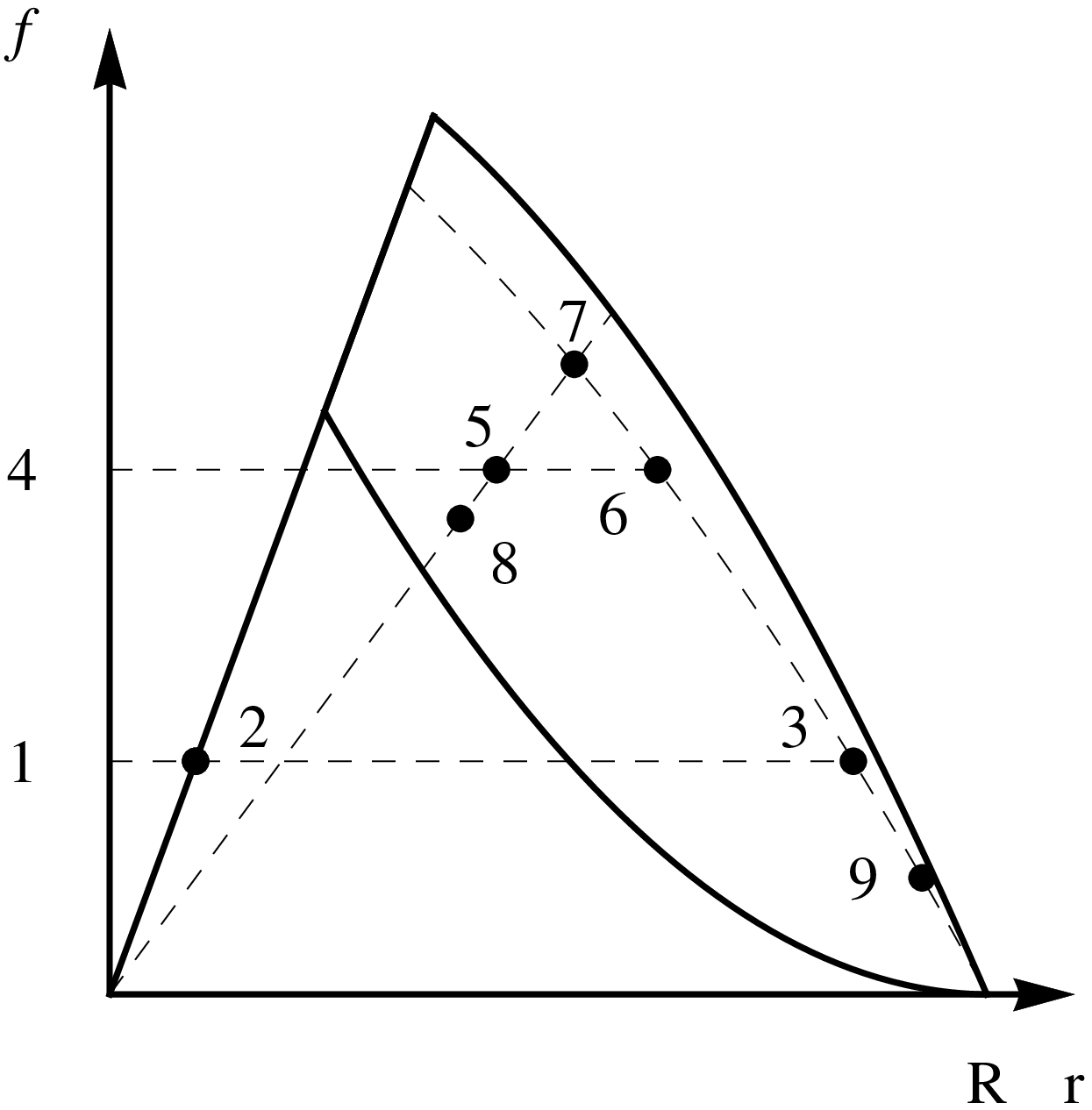}
    \end{psfrags}
        \caption{$(u_\ell,u_r) \in \Oc^2$}
    \end{subfigure}
    \begin{subfigure}[b]{0.24\textwidth}
    \begin{psfrags}
      \psfrag{f}[c,c]{$f$}
      \psfrag{R}[c,B]{$R$}
      \psfrag{r}[c,B]{$\rho$}
      \psfrag{1}[c,B]{$F$}
      \psfrag{5}[c,c]{$u_\ell~$}
      \psfrag{4}[l,c]{$u_r$}
      \psfrag{2}[c,b]{$\check{u}$}
      \psfrag{3}[c,b]{$\hat{u}~$}
      \includegraphics[width=\textwidth]{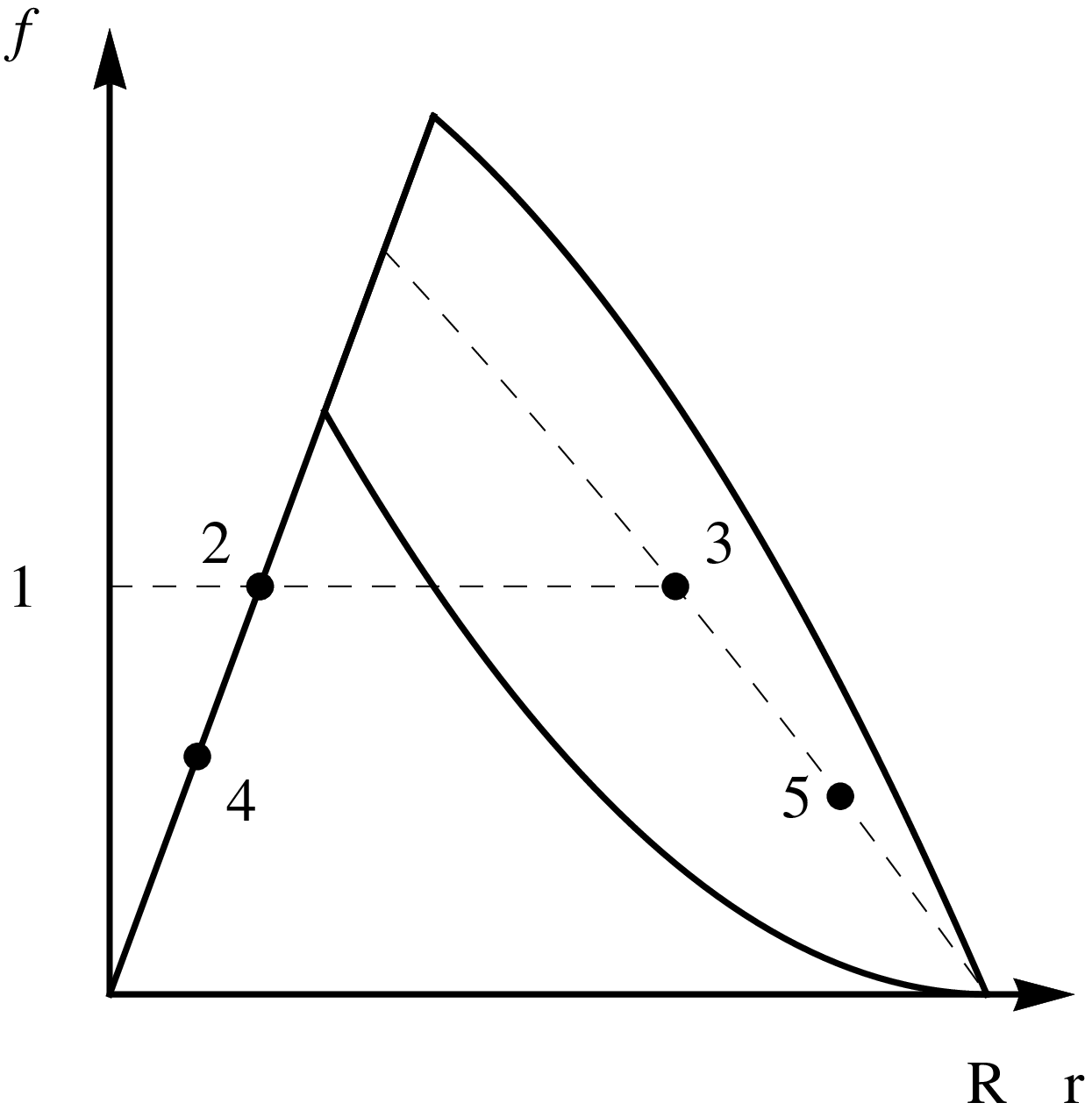}
    \end{psfrags}
        \caption{$(u_\ell,u_r) \in \Oc\times\Of^-$}
    \end{subfigure}
    \begin{subfigure}[b]{0.24\textwidth}
    \begin{psfrags}
      \psfrag{f}[c,c]{$f$}
      \psfrag{R}[c,B]{$R$}
      \psfrag{r}[c,B]{$\rho$}
      \psfrag{1}[c,B]{$F$}
      \psfrag{5}[c,b]{$u_\ell$}
      \psfrag{6}[c,B]{$u_r~$}
      \psfrag{2}[l,b]{$\check{u}$}
      \psfrag{3}[c,b]{$~\hat{u}$}
      \psfrag{4}[c,b]{}
      \includegraphics[width=\textwidth]{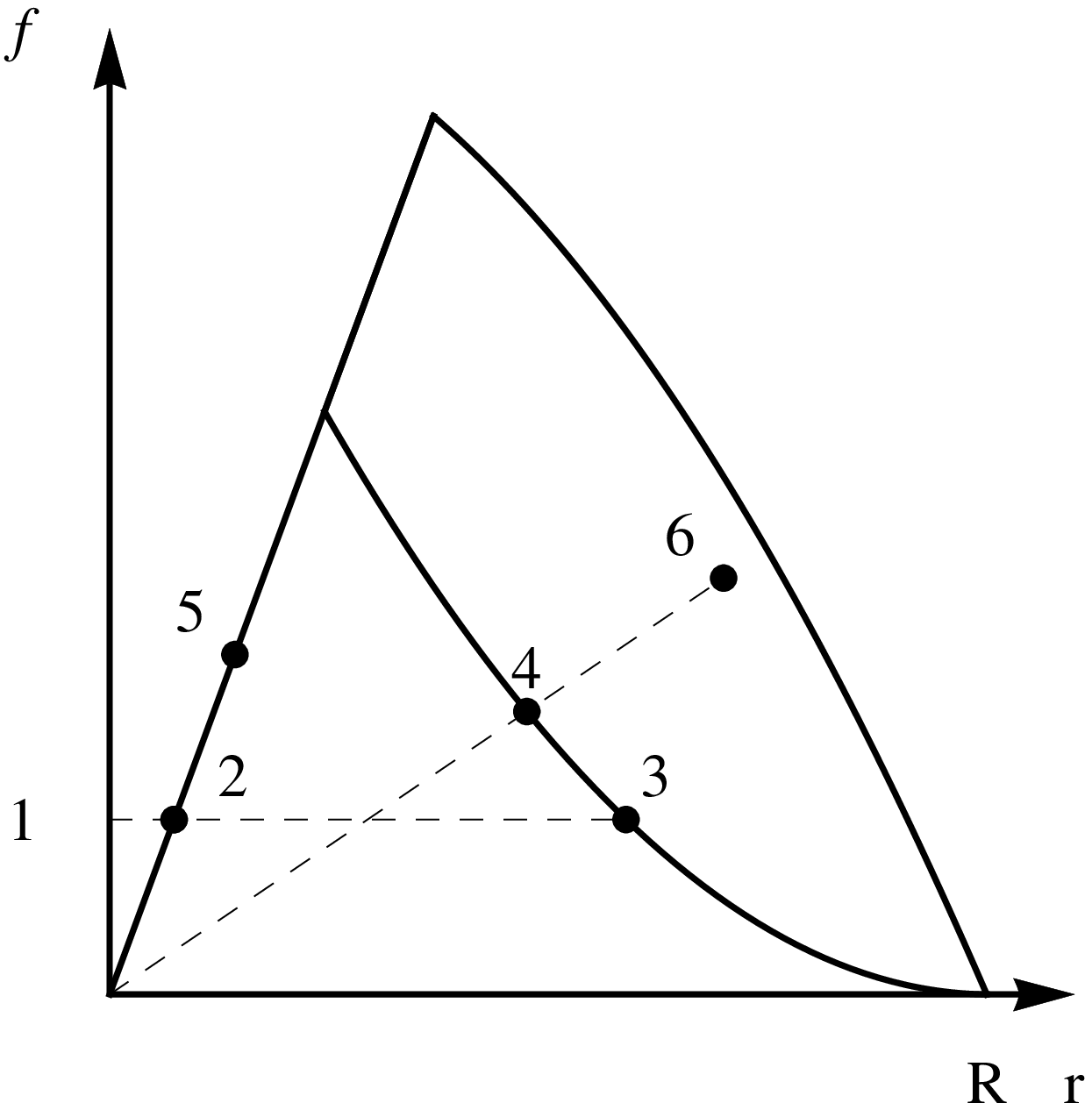}
    \end{psfrags}
        \caption{$(u_\ell,u_r) \in \Of^-\times\Oc$}
    \end{subfigure}
\caption{The selection criterion for $\hat{u}$ and $\check{u}$ given in Definition~\ref{def:01}.}
\label{fig:uhatucheck0}
\end{figure}

%
%
%
%
%

In the next two propositions we list the main properties of $\mathcal{R}_F$; the proofs are a case by case study and are deferred to Section~\ref{sec:tec1}.

\begin{proposition}\label{prop:cc}
The constrained Riemann solver $\mathcal{R}_F$ is $\Lloc1$-continuous and is not consistent, because it satisfies \eqref{P2} but not \eqref{P1} of Definition~\ref{def:cons}.
\end{proposition}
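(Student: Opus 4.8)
The plan is to verify the three claimed properties — $\Lloc1$-continuity, validity of \eqref{P2}, and failure of \eqref{P1} — separately, in each case reducing to the explicit description of $\mathcal{R}_F$ in Definition~\ref{def:01} and exploiting the fact that $\mathcal{R}$ is already known to be consistent and $\Lloc1$-continuous by Proposition~\ref{prop:cc0}. First I would observe that $\Omega^2 = \mathcal{D}_1 \cup \mathcal{D}_2$ and that on the interior of $\mathcal{D}_1$ the solver coincides with $\mathcal{R}$, so continuity there is inherited; the work is concentrated on $\mathcal{D}_2$ and on the interface $\partial\mathcal{D}_1 \cap \partial\mathcal{D}_2$. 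On $\mathcal{D}_2$ the solver is built from the two ``half'' Riemann problems $\mathcal{R}[u_\ell,\hat u]$ and $\mathcal{R}[\check u,u_r]$, so by the $\Lloc1$-continuity of $\mathcal{R}$ it suffices to show that the selected traces $\hat u = \hat u(u_\ell,F)$ and $\check u = \check u(u_r,F)$ depend continuously on $(u_\ell,u_r)$. For $\hat u$ this follows because $w \mapsto w(\hat u) = \max\{w(u_\ell),w_-\}$ is continuous and the level set $\{f = F\}$ intersected with $\Oc$ is a graph over $w$ (using \eqref{H1}, i.e.\ the first Lax curves are strictly decreasing, so $L_w(\rho) = F$ has a unique solution depending continuously on $w$); for $\check u$ the only subtlety is the jump in the defining condition for $v(\check u)$ across the set $\{f(\psi_2^-(u_r)) = F\}$, but exactly there one has $v(\check u) = V = v_r$ from both branches since $f(\psi_2^-(u_r)) = F$ forces $v_r = V$ on that interface, so $\check u$ is in fact continuous. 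Matching at $\partial\mathcal{D}_1$ is checked similarly case by case using \figurename~\ref{fig:uhatucheck0}: when $f(\mathcal{R}[u_\ell,u_r](0^\pm))\to F^-$ the selected $\hat u,\check u$ converge to the traces of the unconstrained solution.

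For \eqref{P2} I would argue that if $\mathcal{R}_F[u_\ell,u_m](\bar x) = u_m$ and $\mathcal{R}_F[u_m,u_r](\bar x) = u_m$, then one reconstructs $\mathcal{R}_F[u_\ell,u_r]$ by concatenation. The key point is that the state $u_m$ appearing as a trace at $\bar x$ in both sub-solvers ``sees'' the constraint consistently: either both sub-problems lie in $\mathcal{D}_1$, in which case \eqref{P2} for $\mathcal{R}_F$ reduces to \eqref{P2} for $\mathcal{R}$ which holds by Proposition~\ref{prop:cc0}; or at least one sub-problem lies in $\mathcal{D}_2$, in which case $\bar x = 0$ necessarily (the constraint only acts at $x=0$, and a trace equal to $u_m$ at a negative/positive location pins $u_m$ to be a trace of one of the unconstrained halves), and then the selected intermediate states $\hat u,\check u$ for $(u_\ell,u_r)$, $(u_\ell,u_m)$ and $(u_m,u_r)$ coincide because they depend on $u_\ell,u_r$ only through $w(u_\ell)$ and through $v(u_r)$ and the dichotomy $f(\psi_2^-(u_r)) \lessgtr F$, all of which are preserved when $u_m$ is itself the relevant trace. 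This is again a finite case analysis over the four regions of \figurename~\ref{fig:uhatucheck0}.

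For the failure of \eqref{P1} I would simply exhibit a counterexample: take $(u_\ell,u_r)\in\mathcal{D}_2$ with, say, $u_\ell,u_r\in\Of$ and $f(u_\ell)>F$, let $u_m = \mathcal{R}_F[u_\ell,u_r](\bar x)$ for some $\bar x>0$ in the region where the solution already equals $\check u$ (so $u_m = \check u \in \Oc$ with $f(u_m) = F$). Then $\mathcal{R}_F[u_\ell,u_m]$ has $(u_\ell,u_m)\in\Of\times\Oc$ and by construction its solution near $x=0$ is governed by a \emph{new} constrained problem whose right trace need not reproduce $\mathcal{R}_F[u_\ell,u_r]$ for $x<\bar x$ — concretely the nonclassical phase-transition/undercompressive shock at $x=0$ created by the constraint is destroyed when one restarts from $u_\ell$ on the left and the already-congested $u_m$ on the right, so the first identity in \eqref{P1} fails. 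I expect this counterexample to be the cleanest part; the genuine obstacle is the bookkeeping in the continuity proof at the interface $\partial\mathcal{D}_1$, where one must check that all four cases of \figurename~\ref{fig:uhatucheck0} glue together without a jump — in particular that the two prescriptions for $v(\check u)$ agree in the limit $f(\psi_2^-(u_r))\to F$, and that $\hat u$ does not jump when $w(u_\ell)$ crosses $w_-$ (it does not, since there $w(\hat u)=w_-$ from both sides).
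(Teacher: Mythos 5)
Your overall architecture (lean on Proposition~\ref{prop:cc0} and run a case analysis over \figurename~\ref{fig:uhatucheck0}) is the same as the paper's, but two of your three steps contain genuine errors. For continuity, the reduction ``it suffices to show that $\hat u$ and $\check u$ depend continuously on $(u_\ell,u_r)$'' is not available, because $\check u$ is \emph{not} continuous. Your patch — that $f(\psi_2^-(u_r))=F$ forces $v_r=V$ — is false: $f(\psi_2^-(u_r))$ ranges over $[0,V\sigma_-]$ as $v_r$ ranges over $[0,V]$, so whenever $F<V\,\sigma_-$ there are $u_r\in\Oc$ with $v_r<V$ and $f(\psi_2^-(u_r))=F$, and across this set $\check u$ jumps from the free state $(F/V,Q(F/V))$ to $\psi_2^-(u_r)$. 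The solver is nevertheless $\Lloc1$-continuous, but for a different reason: the phase transition from $(F/V,Q(F/V))$ to $\psi_2^-(u_r^\varepsilon)$ has speed $\Lambda\to 0^+$ and collapses onto $x=0$, so the \emph{restriction} of $\mathcal{R}[\check u^\varepsilon,u_r^\varepsilon]$ to $\R_+$ still converges. The same collapsing-wave mechanism, not convergence of the traces, is what you need at $\partial\mathcal{D}_1$: e.g.\ for $u_\ell\in\Of^-$ with $f(u_\ell)=F$, $\hat u^\varepsilon$ does not converge to the trace $u_\ell$ of the unconstrained solution; rather $\Lambda(u_\ell^\varepsilon,\hat u^\varepsilon)\to 0$ and the shock leaves $\R_-$. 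The proof must therefore be run on the restricted half-solutions, as the paper does.

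More seriously, your counterexample to \eqref{P1} does not work. With $u_\ell,u_r\in\Of$, $f(u_\ell)>F$, $u_m=\check u$ and $\bar x\in(0,V)$, property \eqref{gen.properties} gives $v(\check u)=V$, so $u_m=(F/V,Q(F/V))$ lies on the free branch with $f(u_m)=F$ and $\check u(u_m,F)=u_m$; since $f(\mathcal{R}[u_\ell,u_m](0^\pm))=f(u_\ell)>F$, the pair $(u_\ell,u_m)$ is still in $\mathcal{D}_2$ with the \emph{same} $\hat u$, so $\mathcal{R}_F[u_\ell,u_m]=\mathcal{R}[u_\ell,\hat u]$ on $\R_-$ and $\equiv u_m$ on $\R_+$. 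The undercompressive discontinuity is not destroyed, and one checks directly that \emph{both} identities of \eqref{P1} hold for this triple. The violation must be produced the other way around: one needs $(u_\ell,u_r)\in\mathcal{D}_1$ whose middle state \emph{reactivates} the constraint. The paper takes $u_\ell\in\Of^-$, $u_r\in\Oc^-$ with $f(u_\ell)<F<f(\psi_2^-(u_r))$ (so $(u_\ell,u_r)\in\mathcal{D}_1$) and $u_m\doteq\psi_2^-(u_r)=\mathcal{R}_F[u_\ell,u_r](\bar x)$ for suitable $\bar x>0$; then $(u_m,u_r)\in\mathcal{D}_2$ and the second identity of \eqref{P1} fails because $\mathcal{R}_F[u_m,u_r]$ is not identically $u_m$ on $x<\bar x$. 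Your sketch of \eqref{P2} is in the right spirit (the paper carries out the same kind of case analysis), but as written the proposal neither establishes continuity nor exhibits a valid violation of \eqref{P1}.
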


We conclude the section with some remarks on the invariant domains. 
Clearly, $\Omega$ is an invariant domain for both $\mathcal{R}$ and $\mathcal{R}_F$.
Moreover, $\Of$ and $\Oc$ are invariant domains for $\mathcal{R}$ but not for $\mathcal{R}_F$.
For this reason we look for minimal (w.r.t.\ inclusion) invariant domains for $\mathcal{R}_F$ containing $\Of$ or $\Oc$, see \figurename~\ref{fig:invariantSF1}.

\begin{proposition}\label{prop:IDR}
Let $\mathcal{R}_F$ be the constrained solver introduced in Definition~\ref{def:01}.

\begin{enumerate}[label={(IR.\arabic*)},leftmargin=*]\setlength{\itemsep}{0cm}%

\item\label{I11} The minimal invariant domain containing $\Of$ is $\mathcal{I}_f \doteq \Of \cup \mathcal{I}_1 \cup \mathcal{I}_2$, where
\begin{align*}
&\mathcal{I}_1 \doteq \bigl\{ u \in \Oc \,:\, f(u) \le F \le f(\psi
_2^+(u)) \bigr\},
&\mathcal{I}_2 \doteq \bigl\{ u \in \Oc \,:\, f(u) > F ,\ L_{w(u)}''(\rho) > 0 \bigr\}.
\end{align*}

\item\label{I12} The minimal invariant domain containing $\Oc$ is $\mathcal{I}_c \doteq \Oc \cup \left\{ \bigl( F/V, Q(F/V) \bigr)\right\}$.

\end{enumerate} 
\end{proposition}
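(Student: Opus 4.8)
The plan is to prove each part by a double inclusion: first show that the candidate set is invariant, then show that any invariant domain containing $\Of$ (resp.\ $\Oc$) must contain it. Throughout I would reason with the geometric picture in the $(\rho,f)$-plane (\figurename~\ref{fig:uhatucheck0} and \figurename~\ref{fig:invariantSF1}), tracking for each pair $(u_\ell,u_r)$ in the relevant square the intermediate states produced by $\mathcal{R}_F$, namely the waves listed in Definition~\ref{def:R} together with $\hat u,\check u$ from Definition~\ref{def:01}.

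For \ref{I11}, to show $\mathcal{I}_f = \Of\cup\mathcal{I}_1\cup\mathcal{I}_2$ is invariant I would run through the cases of Definition~\ref{def:R} applied to $u_\ell,u_r\in\mathcal{I}_f$. When both data lie in $\Of$, the solver either leaves the datum in $\Of$ (unconstrained case) or, in $\mathcal{D}_2$, produces $\hat u\in\Oc$ with $w(\hat u)=w_-$ and $f(\hat u)=F$ — by \eqref{gen.properties} — and $\check u\in\Of$ with $v(\check u)=V$, i.e.\ $\check u\in\Of^+$; I must check $\hat u\in\mathcal{I}_1$, which amounts to $F\le f(\psi_2^+(\hat u))$, a consequence of $L_{w}$ being decreasing in $v$ and the bound $F<V_f\sigma_+^f$. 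When $u_\ell$ or $u_r$ lies in $\mathcal{I}_1$ or $\mathcal{I}_2$ (so in $\Oc$), I would use that: the $1$-wave from $u_\ell\in\Oc$ runs along $L_{w(u_\ell)}$ toward $u_*(u_\ell,u_r)$, and since $u_*$ has $w(u_*)=w(u_\ell)$ and $v(u_*)=v(u_+)$ it stays on the same first Lax curve; convexity/concavity of $L_{w}$ (governed by the sign of $L_w''$, i.e.\ by \eqref{H2} and Remark~\ref{rem:Mars}) controls whether the wave is a shock or rarefaction and hence which states are actually attained; one then verifies those attained states still satisfy $f\le F\le f(\psi_2^+)$ or $f>F$ with $L_w''>0$. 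The key algebraic facts I expect to need repeatedly are: $f(\psi_2^+(u))$ is monotone along a first Lax curve, $\psi_1,\psi_2^\pm$ preserve the relevant invariant ($w$ or $v$), and on the branch where $L_w''>0$ the first wave is a rarefaction that does not decrease $f$, so it cannot leave $\mathcal{I}_2$ downward past $F$ without entering $\mathcal{I}_1$.

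For minimality in \ref{I11}: given an invariant $\mathcal{I}\supseteq\Of$, I would exhibit, for each $u\in\mathcal{I}_1\cup\mathcal{I}_2$, a pair $(u_\ell,u_r)\in\Of^2$ or $(\Of\cup\mathcal{I}_1\cup\mathcal{I}_2)^2$ whose $\mathcal{R}_F$-solution attains $u$ as a trace. For $u\in\mathcal{I}_1$ one picks $u_\ell\in\Of^-$ with $f(u_\ell)$ suitably large and $u_r$ such that the constrained solver forces $\hat u=u$ (the flux level $F$ and $w(\hat u)=w_-$ pin it down — but note $\mathcal{I}_1$ has varying $w$, so instead one uses pairs with $u_\ell\in\Oc\cap\mathcal{I}_1$ already, hence the need to close under $\mathcal{R}_F$, or realizes $u$ via $u_*(u_\ell,u_r)$ for $u_\ell,u_r\in\Oc$); for $u\in\mathcal{I}_2$, $u$ appears along a $1$-rarefaction emanating from a congested left state forced by the constraint. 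Part \ref{I12} is easier: $\Oc$ fails to be invariant only through the single new state $\check u$ that arises when $u_r\in\Of$ and $f(\psi_2^-(u_r))>F$, forcing $v(\check u)=V$ and $f(\check u)=F$, i.e.\ $\check u=(F/V,Q(F/V))$; adding this one point suffices because starting from $\Oc\cup\{(F/V,Q(F/V))\}$ no further new states are produced — one checks that using $(F/V,Q(F/V))$ as $u_\ell$ or $u_r$ in Definition~\ref{def:R} yields only states in $\Oc\cup\{(F/V,Q(F/V))\}$ — and minimality is immediate since that point is genuinely attained.

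The main obstacle I anticipate is the case analysis for invariance of $\mathcal{I}_f$ when a congested state interacts with a free state under the constraint: there one must simultaneously control the first Lax curve's convexity (which switches with the sign of $w$ for the PT$^0$ model, cf.\ the remark after \eqref{H2}), the phase-transition/rarefaction alternative of \ref{R4}--\ref{R4b}, and the selection of $\hat u$ on the line $w=w_-$ versus $w=w(u_\ell)$; keeping track of exactly which intermediate states are traversed — and checking each lands in $\mathcal{I}_1$ or $\mathcal{I}_2$ — is where the bookkeeping is heaviest. The rest is routine monotonicity of $f$ and $v$ along Lax curves together with the structural facts already established in Section~\ref{sec:PTmodels}.
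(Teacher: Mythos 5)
Your plan follows the paper's proof essentially verbatim: invariance of the candidate sets is dispatched as a routine case check of Definitions~\ref{def:R} and~\ref{def:01}, and minimality is obtained by exhibiting families of Riemann data in $\Of^2$, $\Oc^2$ and $\Of^+\times\Oc$ whose constrained solutions sweep out $\mathcal{I}_1\cup\mathcal{I}_2$ (resp.\ the single extra trace $\bigl(F/V,Q(F/V)\bigr)$ for \ref{I12}). Two small slips to repair in the write-up: for $(u_\ell,u_r)\in\Of^2\cap\mathcal{D}_2$ one has $w(\hat{u})=w_-$ only when $u_\ell\in\Of^-$, whereas for $u_\ell\in\Of^+$ one gets $w(\hat{u})=w(u_\ell)$ — which is exactly how the full $w$-range of $\{u\in\Oc : f(u)=F\}$, and hence of $\mathcal{I}_1$, is reached; and in \ref{I12} the new state $\check{u}=\bigl(F/V,Q(F/V)\bigr)$ is produced by data $(u_\ell,u_r)\in\Oc^2$ with $f(\psi_2^-(u_r))>F$ (not $u_r\in\Of$), and it lies outside $\Oc$ only when $F<V\sigma_-$.
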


\begin{figure}[ht]
\centering{
\begin{psfrags}
      \psfrag{f}[l,c]{$f$}
      \psfrag{F}[l,B]{$F$}
      \psfrag{R}[c,B]{$R$}
      \psfrag{r}[l,B]{$\rho$}
\includegraphics[width=.24\textwidth]{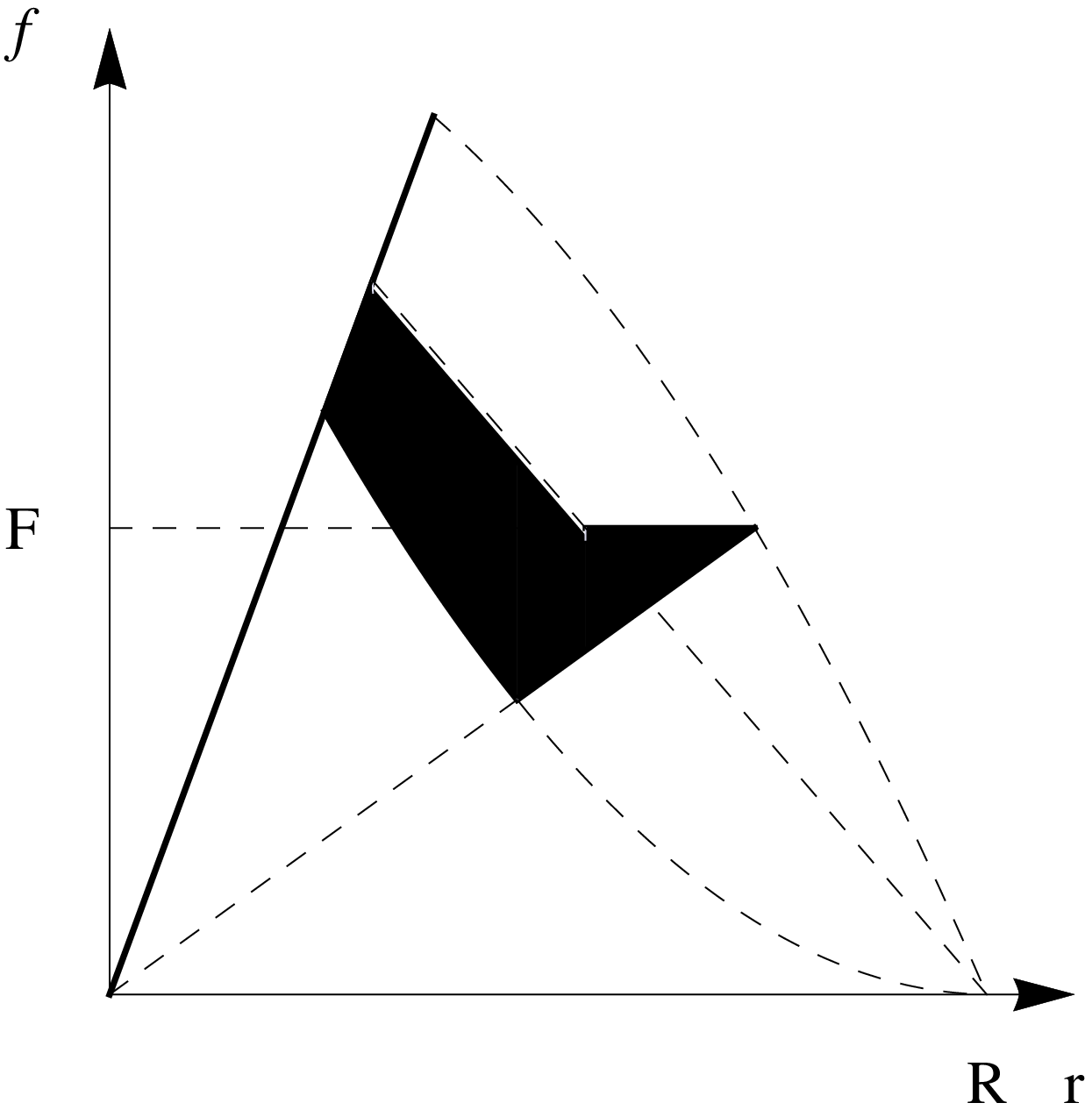}\qquad
\includegraphics[width=.24\textwidth]{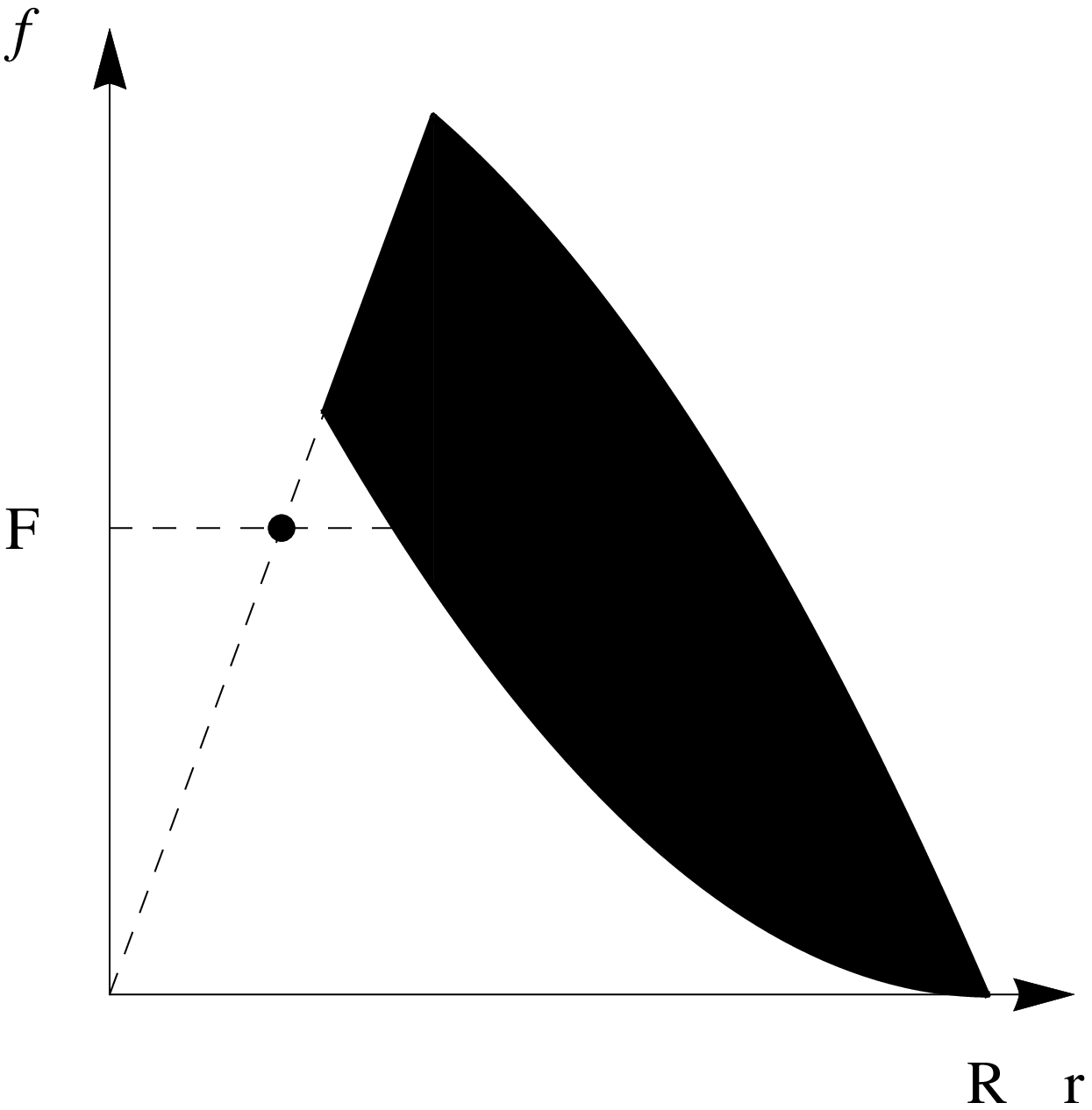}
\end{psfrags}}
\caption{Starting from the left, we represent $\mathcal{I}_f$ and $\mathcal{I}_c$ described respectively in \ref{I11} and \ref{I12} of Proposition~\ref{prop:IDR}.}
\label{fig:invariantSF1}
\end{figure}

\noindent We remark that $\mathcal{I}_2 = \emptyset$ for the PT$^p$ model, as well as for the PT$^a$ model in the case $L_{w_-}''(\sigma_-) \le 0$.


\section{The PT models with non-intersecting phases}\label{sec:noninter}


In this section we consider the case in which $\Of \cap \Oc = \emptyset$, namely the maximal velocities for the free and congested phases do not coincide, $V_c< V_f$.
This implies that a new kind of phase transition waves connecting $\Of^+$ and $\Oc$ appears, besides the ones from $\Of^-$ to $\Oc$. 
Below we give the definitions of the Riemann solver $\mathcal{S}$ and the constrained Riemann solver $\mathcal{S}_F$, which are valid both for the PT$^a$ and PT$^p$ models.

We remark that the analysis on the Riemann problem (with and without constraints) for the PT$^p$ model in the case of non-intersecting phases has already been carried out in \cite{BenyahiaRosini01,BenyahiaRosini02} and here it is understood in a more general framework.

\begin{definition}
The Riemann solver $\mathcal{S} \colon \Omega^2 \to \L\infty(\R;\Omega)$ associated to \eqref{eq:system},\eqref{eq:Rdata} is defined as follows.

\begin{enumerate}[label={(S.\arabic*)},leftmargin=*]\setlength{\itemsep}{0cm}%

\item
We let $\mathcal{S}[u_\ell,u_r]\doteq\mathcal{R}[u_\ell,u_r]$ whenever
\[\begin{array}{r@{}c@{\,}l@{\,}l}
(u_\ell, u_r)\in&& \Of^2 \cup \Oc^2 
&\cup
\{ (u_\ell , u_r) \in \Oc\times\Of \,:\, L_{w(u_\ell)}''(\rho_\ell) \ge 0 \}
\\[2pt]&&&\cup
\{ (u_\ell , u_r) \in \Of^-\times\Oc \,:\, \Lambda(u_\ell,u^c_-)\ge\lambda_1(u^c_-) \}
\\[2pt]&&&\cup
\{ (u_\ell , u_r) \in \Of^+\times\Oc \,:\, L_{w(u_\ell)}''(\rho_\ell) \le 0 \}.
\end{array}\]

\item\label{S2}
If $u_\ell\in\Oc$, $u_r\in \Of$ and $L_{w(u_\ell)}''(\rho_\ell) < 0$, then we let
\[
\mathcal{S}[u_\ell,u_r](x)\doteq 
\begin{cases}
\mathcal{R}[u_\ell,\psi_1^c(u_\ell)](x) &\text{for }x<\Lambda(\psi_1^c(u_\ell),\psi_1^f(u_\ell)),\\
\mathcal{R}[\psi_1^f(u_\ell),u_r](x) &\text{for }x>\Lambda(\psi_1^c(u_\ell),\psi_1^f(u_\ell)).
\end{cases} 
\]

\item\label{S3}
If $u_\ell\in\Of^-$, $u_r\in\Oc$ and $\Lambda(u_\ell,u^c_-)<\lambda_1(u^c_-)$, then we let
\[
\mathcal{S}[u_\ell,u_r](x)\doteq 
\begin{cases}
u_\ell&\text{for }x<\Lambda(u_\ell,u^c_-),\\
\mathcal{R}[u^c_-,u_r](x) &\text{for }x>\Lambda(u_\ell,u^c_-).
\end{cases} 
\]

\item\label{S4}
If $u_\ell \in \Of^+$, $u_r\in\Oc$ and $L_{w(u_\ell)}''(\rho_\ell) > 0$, then we let
\[
\mathcal{S}[u_\ell,u_r](x)\doteq
\begin{cases}
u_\ell&\text{for }x<\Lambda(u_\ell,\psi_1^c(u_\ell)),\\
\mathcal{R}[\psi_1^c(u_\ell),u_r](x) &\text{for }x>\Lambda(u_\ell,\psi_1^c(u_\ell)).
\end{cases}
\]

\end{enumerate}
\end{definition}

\begin{remark}\label{rem:specialRS}
Notice that $\mathcal{S}$ differs from $ \mathcal{R}$ (corresponding to $V \doteq V_f$) only in the cases described in \ref{S2}, \ref{S3} and \ref{S4}, namely $\mathcal{S}[u_\ell,u_r]$ differs from $ \mathcal{R}[u_\ell,u_r]$ if and only if $(u_\ell,u_r)$ satisfies one of the following conditions:
\begin{align}\label{eq:specialRS1}
&u_\ell \in \Oc,&
&u_r \in \Of,&
&L''_{w(u_\ell)}(\rho_\ell) <0,
\\\label{eq:specialRS2}
&u_\ell \in \Of^+,&
&u_r \in \Oc,&
&L''_{w(u_\ell)}(\rho_\ell) >0,
\\\label{eq:specialRS3}
&u_\ell \in \Of^-,&
&u_r \in \Oc,&
&\Lambda(u_\ell,u_-^c) < \lambda_1(u_-^c).
\end{align}
In particular, for the PT$^p$ model we have that $\mathcal{S}[u_\ell,u_r]$ differs from $ \mathcal{R}[u_\ell,u_r]$ (corresponding to $V \doteq V_f$) if and only if $(u_\ell, u_r)\in \Oc\times\Of$; this is also the case for the PT$^a$ model if $L_{w_-}''(\sigma_-^f) < 0$.
\end{remark}

In the next proposition we list the main properties of $\mathcal{S}$; the proof is a case by case study and is deferred to Section~\ref{sec:tec2}.

\begin{proposition}\label{prop:cc2}
The Riemann solver $\mathcal{S}$ is $\Lloc1$-continuous and consistent.
\end{proposition}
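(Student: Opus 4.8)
The plan is to leverage Proposition~\ref{prop:cc0}, which gives $\Lloc1$-continuity and consistency of $\mathcal{R}$, and then to control only the discrepancies between $\mathcal{S}$ and $\mathcal{R}$, which by Remark~\ref{rem:specialRS} occur exactly on the three regions described by \eqref{eq:specialRS1}, \eqref{eq:specialRS2} and \eqref{eq:specialRS3}. Since $\mathcal{S}$ agrees with $\mathcal{R}$ outside these regions, the key observation is that in each of these cases $\mathcal{S}[u_\ell,u_r]$ is obtained by inserting one additional wave (a phase transition from $\Of$ to $\Oc$ along a first Lax curve, or a stationary-type phase transition) and then solving a residual Riemann problem with $\mathcal{R}$. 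First I would record, for each of the three cases, the explicit intermediate state ($\psi_1^f(u_\ell)$ and $\psi_1^c(u_\ell)$ in case \ref{S2}, $\psi_1^c(u_\ell)$ in case \ref{S4}, $u_-^c$ in case \ref{S3}) and verify that the wave speeds are correctly ordered: the new phase-transition wave must have speed no larger than the leading speed of the subsequent $\mathcal{R}$-pattern, so that the concatenation is a genuine self-similar admissible solution in the sense of Definition~\ref{def:Colombo}. This uses \eqref{H1} (negativity of $\lambda_1$ on $\Oc^{ex}$), \eqref{H2}, and the sign conditions $L''_{w(u_\ell)}(\rho_\ell)\lessgtr 0$ that define the cases.

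For $\Lloc1$-continuity I would argue by a standard patching argument. Away from the boundaries of the three special regions, continuity of $\mathcal{S}$ is inherited directly from that of $\mathcal{R}$ (Proposition~\ref{prop:cc0}) and from continuity of the maps $\psi_1^{f,c}$, $\psi_2^\pm$, $u_*$, $\Lambda$, $\lambda_1$ and $L''_{w}$ in their arguments. The only delicate point is the behaviour as $(u_\ell,u_r)$ crosses one of the three thresholds, e.g. $L''_{w(u_\ell)}(\rho_\ell)\to 0^-$ in \ref{S2}: there one checks that $\psi_1^c(u_\ell)\to\psi_1^f(u_\ell)$ and $\Lambda(\psi_1^c(u_\ell),\psi_1^f(u_\ell))$ stays bounded, so the extra wave shrinks to zero and $\mathcal{S}[u_\ell,u_r]$ converges in $\Lloc1$ to $\mathcal{R}[u_\ell,u_r]$, matching the first branch (S.1). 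The analogous limits $\Lambda(u_\ell,u_-^c)\to\lambda_1(u_-^c)$ in \ref{S3} and $L''_{w(u_\ell)}(\rho_\ell)\to 0^+$ in \ref{S4} are handled the same way: the inserted wave degenerates and the two definitions agree in the limit. Hence $\mathcal{S}$ is $\Lloc1$-continuous on all of $\Omega^2$.

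For consistency I would check \eqref{P1} and \eqref{P2} by a case analysis on which phases $u_\ell,u_m,u_r$ lie in and which branch of the definition is active for each pair. When all three states trigger the branch (S.1), consistency is immediate from consistency of $\mathcal{R}$. The substantive cases are those in which splitting at an intermediate state $u_m=\mathcal{S}[u_\ell,u_r](\bar x)$ changes the active branch, e.g. $u_\ell\in\Oc$, $u_r\in\Of$, $L''_{w(u_\ell)}(\rho_\ell)<0$: here $u_m$ is read off as either $\psi_1^c(u_\ell)$, $\psi_1^f(u_\ell)$, or a point of the $\mathcal{R}[\psi_1^f(u_\ell),u_r]$ fan, and in each subcase one verifies directly that recomposing $\mathcal{S}[u_\ell,u_m]$ with $\mathcal{S}[u_m,u_r]$ (using that $w$ and $v$ are the Riemann invariants and that the special waves lie on first Lax curves of constant $w$) reproduces $\mathcal{S}[u_\ell,u_r]$; the reverse implication \eqref{P2} is symmetric. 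I expect the main obstacle to be exactly this bookkeeping: ensuring that when the middle state falls on the new phase-transition wave, the left sub-solver $\mathcal{S}[u_\ell,u_m]$ again activates case \ref{S2}/\ref{S3}/\ref{S4} and produces only the truncated wave, while the right sub-solver $\mathcal{S}[u_m,u_r]$ either stays in the same branch or collapses to an $\mathcal{R}$-pattern — this requires knowing that the threshold conditions ($L''$-sign, $\Lambda$ versus $\lambda_1$) are preserved under such truncation, which follows from genuine nonlinearity \eqref{H2} and the monotonicity built into \eqref{H1}. The remaining combinations (middle state in $\Of$, or in $\Oc$ off the special wave) reduce to the consistency of $\mathcal{R}$ already established.
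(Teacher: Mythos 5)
Your overall strategy (reduce to the three discrepancy regions \eqref{eq:specialRS1}--\eqref{eq:specialRS3} via Remark~\ref{rem:specialRS}, then do a case analysis on the inserted phase-transition wave) is the same as the paper's, and your treatment of consistency is in line with Lemma~\ref{lem:08}. However, the continuity argument contains a genuine error. You claim that as $L''_{w(u_\ell)}(\rho_\ell)\to 0^-$ in \ref{S2} one has $\psi_1^c(u_\ell)\to\psi_1^f(u_\ell)$, so that ``the extra wave shrinks to zero.'' This is false: $\psi_1^c(u_\ell)$ and $\psi_1^f(u_\ell)$ are the intersections of the first Lax curve through $u_\ell$ with $\{v=V_c\}$ and $\{v=V_f\}$ respectively, and since $V_c<V_f$ throughout Section~\ref{sec:noninter} these two states are separated by a fixed gap that has nothing to do with the sign or size of $L''$. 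The phase-transition wave from $\psi_1^c(u_\ell)$ to $\psi_1^f(u_\ell)$ never degenerates; what actually happens at the threshold $L''_{w(u_\ell)}=0$ (which under \eqref{H2} occurs only for the PT$^0$ model at $w(u_\ell)=0$, where the Lax curve is affine) is that the speeds of the $1$-wave $u_\ell\to\psi_1^c(u_\ell)$, of the jump $\psi_1^c(u_\ell)\to\psi_1^f(u_\ell)$, and of the single shock $u_\ell\to\psi_1^f(u_\ell)$ prescribed by (S.1) all coalesce to the common slope of the line, so the two branch formulas define the same $\Lloc1$ function. The same misconception affects your discussion of \ref{S3} and \ref{S4}: the inserted waves there do not degenerate either; it is the agreement of speeds (e.g.\ $u_p(u_\ell)\to u_-^c$ as $\Lambda(u_\ell,u_-^c)\to\lambda_1(u_-^c)$) that makes the definitions match.

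Moreover, the configurations the paper actually singles out as delicate are not these thresholds but the points where $\mathcal{S}[u_\ell,u_r]$ degenerates to a \emph{single} phase transition inside one of the special regions — for instance, in \eqref{eq:specialRS1}, $v(u_\ell)=V_c$ and $u_r=\psi_1^f(u_\ell)$. There a perturbation $(u_\ell^\varepsilon,u_r^\varepsilon)$ opens up a $1$-rarefaction fan $\mathcal{R}[u_\ell^\varepsilon,\psi_1^c(u_\ell^\varepsilon)]$ and a trailing contact discontinuity, and one must verify that both edges $\lambda_1(u_\ell^\varepsilon)$ and $\lambda_1(\psi_1^c(u_\ell^\varepsilon))$ of the fan converge to the same limit $\lambda_1(u_\ell)$ while $\Lambda(\psi_1^c(u_\ell^\varepsilon),\psi_1^f(u_\ell^\varepsilon))\to\Lambda(u_\ell,u_r)$, so that the extra waves vanish in $\Lloc1$. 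Your proposal asserts that away from the $L''$ and $\Lambda$ thresholds continuity is ``inherited directly'' from continuity of the constituent maps, but these single-phase-transition configurations lie in the interior of the threshold regions and are precisely where that inheritance fails to be automatic; they need the explicit speed-convergence check that the paper carries out and your argument omits.
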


Before introducing the Riemann solver $\mathcal{S}_F$, we observe that in the present case
\[\begin{array}{r@{}c@{\,}l}
\mathcal{D}_1 = &&
\{ (u_\ell , u_r) \in \Of^2 \,:\, f(u_\ell) \le F \}
\cup
\{ (u_\ell , u_r) \in \Oc^2 \,:\, f(u_*(u_\ell,u_r)) \le F \}
\\[2pt]&\cup&
\{ (u_\ell , u_r) \in \Oc \times \Of \,:\, f(\psi_1^f(u_\ell)) \le F \}
\cup
\{ (u_\ell , u_r) \in \Of^- \times \Oc \,:\, \min\{f(u_\ell),f(\psi_2^-(u_r))\} \le F \}
\\[2pt]&\cup&
\{ (u_\ell , u_r) \in \Of^+ \times \Oc \,:\, f(u_*(u_\ell,u_r)) \le F \}.
\end{array}\]

\begin{figure}
    \centering
    \begin{subfigure}[b]{0.27\textwidth}
    \begin{psfrags}
      \psfrag{f}[c,c]{$f$}
      \psfrag{R}[c,B]{$R$}
      \psfrag{r}[l,B]{$\rho$}
      \psfrag{4}[c,B]{$F$}
      \psfrag{6}[c,c]{$\check{u}~$}
      \psfrag{7}[l,B]{$\hat{u}$}
      \psfrag{5}[c,b]{$u_\ell\,$}
      \psfrag{8}[l,c]{$u_r$}
      \includegraphics[width=0.88\textwidth]{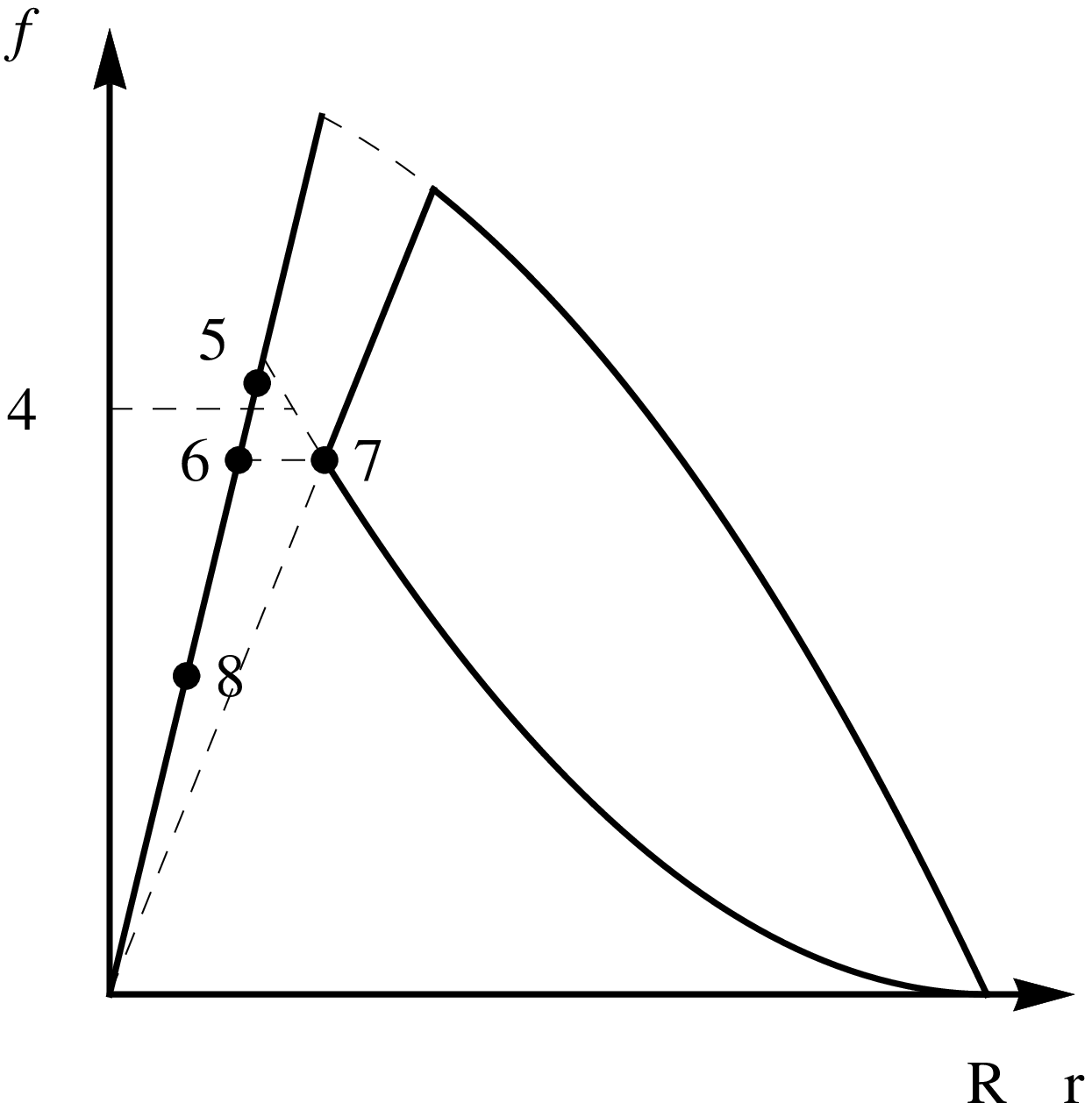}
    \end{psfrags}
        \caption{Case \eqref{eq:special1}.}
    \end{subfigure}
    \begin{subfigure}[b]{0.27\textwidth}
    \begin{psfrags}
      \psfrag{f}[c,c]{$f$}
      \psfrag{R}[c,B]{$R$}
      \psfrag{r}[l,B]{$\rho$}
      \psfrag{4}[c,B]{$F$}
      \psfrag{6}[c,c]{$\check{u}~$}
      \psfrag{7}[c,c]{$u_\ell~$}
      \psfrag{5}[l,B]{$\hat{u}$}
      \psfrag{8}[l,c]{$u_r$}
      \includegraphics[width=0.88\textwidth]{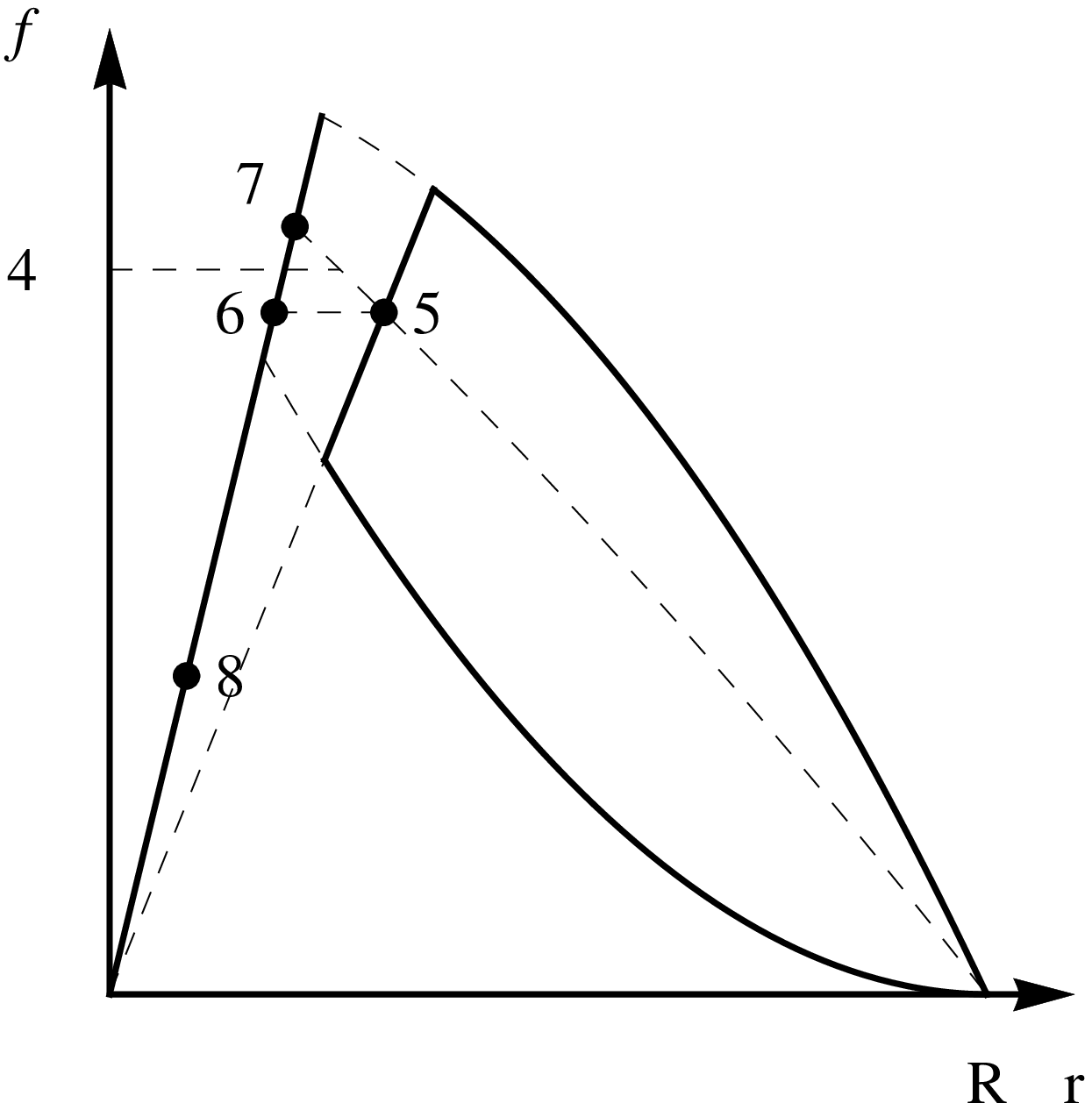}
    \end{psfrags}
        \caption{Case \eqref{eq:special2}.}
    \end{subfigure}
    \begin{subfigure}[b]{0.27\textwidth}
    \begin{psfrags}
      \psfrag{f}[c,c]{$f$}
      \psfrag{R}[c,B]{$R$}
      \psfrag{r}[l,B]{$\rho$}
      \psfrag{4}[c,B]{$F$}
      \psfrag{6}[c,c]{$\check{u}~$}
      \psfrag{9}[c,c]{$u_\ell$}
      \psfrag{7}[c,B]{$\psi_1^\ell~$}
      \psfrag{5}[l,B]{$\hat{u}$}
      \psfrag{8}[l,c]{$u_r$}
      \includegraphics[width=0.88\textwidth]{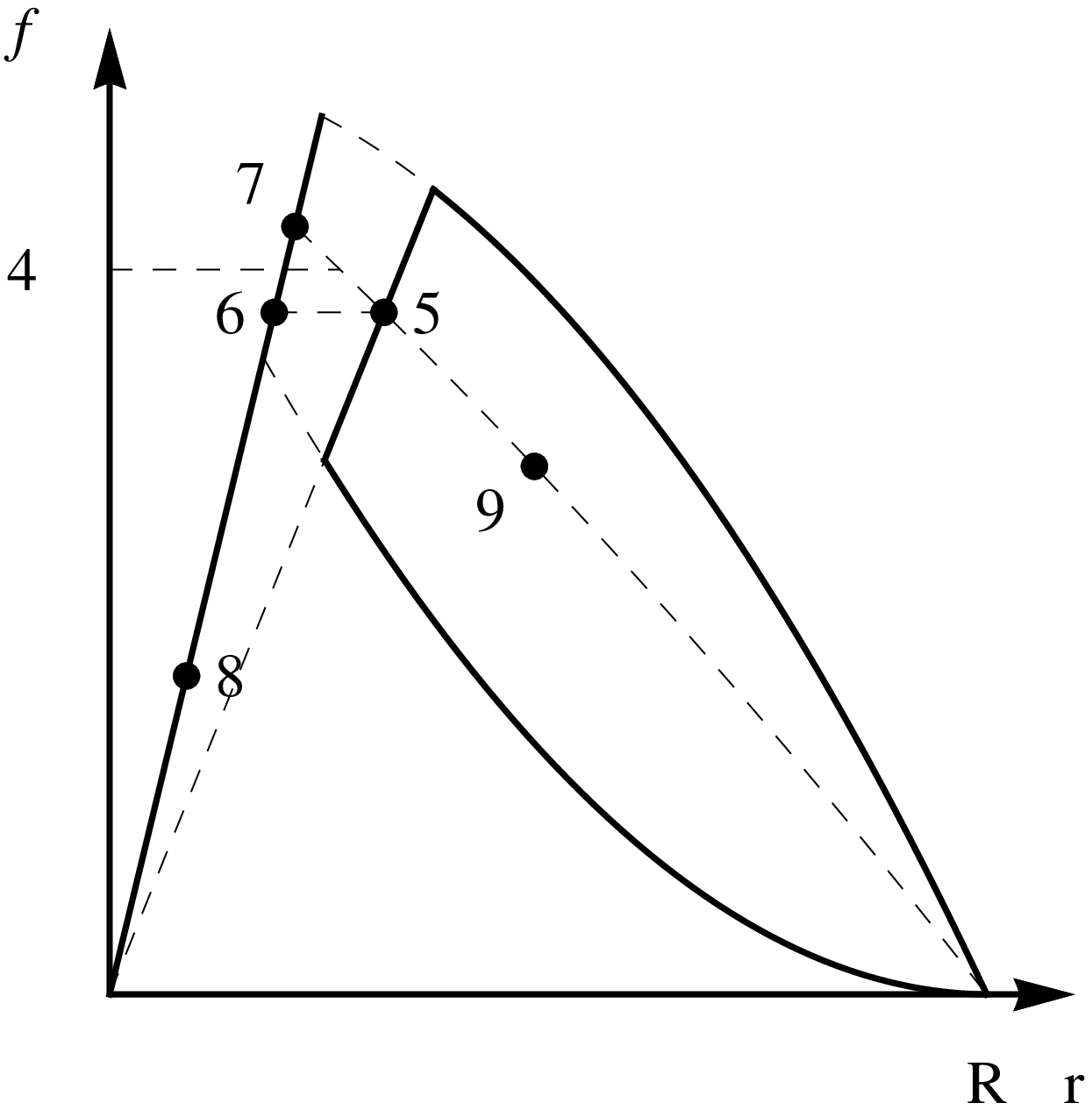}
    \end{psfrags}
        \caption{Case \eqref{eq:special3}, $\psi_1^\ell \doteq \psi_1^f(u_\ell)$.}
    \end{subfigure}
\caption{The selection criterion for $\hat{u}$ and $\check{u}$ given in Definition~\ref{def:04}.}
\label{fig:uhatucheck11}
\end{figure}

\begin{definition}\label{def:04}
The constrained Riemann solver $\mathcal{S}_F \colon \Omega^2 \to \L\infty(\R;\Omega)$  associated to \eqref{eq:system},\eqref{eq:Rdata},\eqref{eq:const} is defined as
\[
\mathcal{S}_F[u_\ell,u_r](x)\doteq \begin{cases}
\mathcal{S}[u_\ell,u_r](x) &\hbox{if }(u_\ell,u_r) \in \mathcal{D}_1,\\[5pt]
\begin{cases}
\mathcal{S}[u_\ell,\hat{u}](x) &\hbox{if }x<0,\\
\mathcal{S}[\check{u},u_r](x) &\hbox{if }x>0,
\end{cases}
&\hbox{if }(u_\ell,u_r) \in \mathcal{D}_2,
\end{cases}
\]
where $\hat{u}  = \hat{u}(u_\ell,F) \in \Oc$ and $\check{u}=\check{u}(u_\ell,u_r,F)  \in \Omega$ are uniquely selected by the conditions 
\begin{gather*}
f(\hat{u})=f(\check{u})=\max \bigl\{f(u)\le F \,:\, u\in\Oc, \, w(u)=\max\{w(u_\ell),w_-\} \bigr\},
\\
w(\hat{u})=\max\{w(u_\ell),w_-\},
\qquad
v(\check{u})=\begin{cases}
V_f&\hbox{if }f(\psi_2^-(u_r))> F,\\
v_r &\hbox{if }f(\psi_2^-(u_r))\le F.
\end{cases}
\end{gather*}
\end{definition}

\noindent
\begin{remark}\label{rem:special}
Notice that, if $(u_\ell,u_r) \in \mathcal{D}_2$, then the selection criterion for $\hat{u}$ and $\check{u}$ given above does not coincide with the one given in Definition~\ref{def:01} with $V \doteq V_f$ if and only if $(u_\ell,u_r)$ satisfies one of the following conditions
\begin{align}\label{eq:special1}
&u_\ell \in \Of^-,&
&u_r \in \Of,&
&F \in \bigl(f(u_-^c), f(u_\ell)\bigr),
\\\label{eq:special2}
&u_\ell \in \Of^+,&
&u_r \in \Of,&
&F \in \bigl(f(\psi_1^c(u_\ell)), f(u_\ell)\bigr),
\\\label{eq:special3}
&u_\ell \in \Oc,&
&u_r \in \Of,&
&F \in \bigl(f(\psi_1^c(u_\ell)), f(\psi_1^f(u_\ell))\bigr),
\end{align}
and in this case $f(\hat{u}) = f(\check{u}) < F$ and $v(\hat{u})=V_c$.
For this reason, in \figurename~\ref{fig:uhatucheck11} we specify the selection criterion for $\hat{u}$ and $\check{u}$ given above only in these cases.

As a consequence, we have that $\mathcal{R}_F[u_\ell,u_r] \ne \mathcal{S}_F[u_\ell,u_r]$ if and only if $(u_\ell,u_r) \in \mathcal{D}_1$ and satisfies one of the conditions \eqref{eq:specialRS1},\eqref{eq:specialRS2},\eqref{eq:specialRS3}, or $(u_\ell,u_r) \in \mathcal{D}_2$ and satisfies \eqref{eq:special1},\eqref{eq:special2},\eqref{eq:special3}.
\end{remark}
\noindent
Clearly, the general properties of $\hat{u}$ and $\check{u}$ listed in \eqref{gen.properties} are satisfied also in the present case.

In the next proposition we state the main properties of $\mathcal{S}_F$; the proof is a case by case study and is deferred to Section~\ref{sec:tec3}.

\begin{proposition}\label{prop:cc3}
The Riemann solver $\mathcal{S}_F$ is neither $\Lloc1$-continuous nor consistent, because it satisfies \eqref{P2} but not \eqref{P1} of Definition~\ref{def:cons}.
\end{proposition}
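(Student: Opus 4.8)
\textbf{Proof plan for Proposition~\ref{prop:cc3}.}
The plan is to argue separately that $\mathcal{S}_F$ fails to be $\Lloc1$-continuous, fails \eqref{P1}, and satisfies \eqref{P2}. The key observation throughout is Remark~\ref{rem:specialRS} together with Remark~\ref{rem:special}: $\mathcal{S}_F$ agrees with $\mathcal{R}_F$ (for the choice $V\doteq V_f$) outside the explicitly listed exceptional configurations \eqref{eq:specialRS1}--\eqref{eq:specialRS3} and \eqref{eq:special1}--\eqref{eq:special3}. Hence for the failure of \eqref{P1} I would simply reuse the counterexample already producing the failure of \eqref{P1} for $\mathcal{R}_F$ in Proposition~\ref{prop:cc}, checking that the relevant triple $(u_\ell,u_m,u_r)$ can be chosen to avoid all the exceptional sets, so that $\mathcal{S}_F$ and $\mathcal{R}_F$ coincide on it; this is the standard mechanism for point-constraint solvers, where $\mathcal{T}[u_\ell,u_r](\bar x)=u_m$ with $u_m$ on the ``downstream'' side of the constraint does not propagate back to $\mathcal{T}[u_\ell,u_m]$ because the constraint at $x=0$ is no longer active once $u_m$ is the left state.

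For the failure of $\Lloc1$-continuity — which is the genuinely new point compared with Proposition~\ref{prop:cc}, since $\mathcal{R}_F$ \emph{was} $\Lloc1$-continuous — I would exhibit a one-parameter family $(u_\ell^n,u_r^n)\to(u_\ell,u_r)$ crossing one of the thresholds in \eqref{eq:special1}--\eqref{eq:special3}, say the boundary $F=f(\psi_1^c(u_\ell))$ in \eqref{eq:special3} with $u_\ell\in\Oc$, $u_r\in\Of$. On one side of the threshold $\mathcal{S}_F=\mathcal{S}$ and the trace $\check u$ at $x=0^+$ lies on $\{v=V_f\}$ with flow $f(\psi_1^f(u_\ell))\le F$; on the other side $\mathcal{S}_F$ invokes the constrained construction of Definition~\ref{def:04}, forcing $v(\hat u)=V_c<V_f$ and $f(\hat u)=f(\check u)<F$ strictly, so the trace jumps by a fixed amount bounded away from zero as $n\to\infty$. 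Because the solution is self-similar, a fixed jump in the trace at $x=0$ forces a fixed $\Lloc1$-distance between the solution profiles on any interval containing $0$, contradicting continuity. I would make the computation quantitative by tracking $v(\check u)$ and $f(\check u)$ as functions of the data and of $F$ and showing they are discontinuous across the threshold.

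For the validity of \eqref{P2}, the idea is that \eqref{P2} only constrains triples for which $\mathcal{S}_F[u_\ell,u_m](\bar x)=u_m$ \emph{and} $\mathcal{S}_F[u_m,u_r](\bar x)=u_m$; in a self-similar setting this pins $\bar x$ to lie between the fastest wave of the left solution and the slowest wave of the right solution, and the assertion reduces to saying that gluing the two half-solutions along $x=\bar x\,t$ reproduces $\mathcal{S}_F[u_\ell,u_r]$. I would split into the two branches of Definition~\ref{def:04}. If $(u_\ell,u_r)\in\mathcal D_1$ the statement follows from the consistency of the unconstrained solver $\mathcal{S}$ (Proposition~\ref{prop:cc2}), after noting that the hypotheses $\mathcal{S}_F[u_\ell,u_m](\bar x)=\mathcal{S}_F[u_m,u_r](\bar x)=u_m$ force $f(u_m)\le F$ so neither sub-problem activates the constraint and $\mathcal{S}_F=\mathcal{S}$ on all three pairs. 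If $(u_\ell,u_r)\in\mathcal D_2$, then the constrained trace structure $\hat u,\check u$ of Definition~\ref{def:04} depends only on $u_\ell$ (through $\hat u$) and on $u_\ell,u_r$ (through $\check u$), and one checks that the two conditions in \eqref{P2} localize $u_m$ either to the left part $\mathcal{S}[u_\ell,\hat u]$ or to the right part $\mathcal{S}[\check u,u_r]$, whence reassembling gives exactly $\mathcal{S}_F[u_\ell,u_r]$; this again uses Proposition~\ref{prop:cc2} applied to each half. The main obstacle I anticipate is the bookkeeping in this last case: one must verify that the selection rules for $\hat u$ and $\check u$ are \emph{unchanged} when $u_\ell$ is replaced by $u_m$ (for the pair $(u_m,u_r)$) or $u_r$ is replaced by $u_m$ (for the pair $(u_\ell,u_m)$), i.e.\ that $\max\{w(u_m),w_-\}=\max\{w(u_\ell),w_-\}$ and the alternative determining $v(\check u)$ is preserved — these follow from the sign information in \eqref{gen.properties} and \eqref{H1}, but they have to be spelled out configuration by configuration, exactly as in the deferred proof in Section~\ref{sec:tec3}.
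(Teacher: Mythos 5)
Your plan has the same architecture as the paper's proof in Section~\ref{sec:tec3}: the failure of \eqref{P1} is inherited from the $\mathcal{R}_F$ counterexample of Lemma~\ref{lem:05}; the failure of $\Lloc1$-continuity comes from the fact that at the $\mathcal{D}_1$/$\mathcal{D}_2$ boundary the constrained trace flow drops from $F$ to $\max\bigl\{f(u)\le F \,:\, u\in\Oc,\ w(u)=\max\{w(u_\ell),w_-\}\bigr\}<F$; and \eqref{P2} is reduced, via the consistency of $\mathcal{S}$ and property \eqref{P2} for $\mathcal{R}_F$ together with Remark~\ref{rem:special}, to a case check on the configurations \eqref{eq:special1}--\eqref{eq:special3}. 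The paper's continuity counterexample sits in $\Of^2$ rather than $\Oc\times\Of$, but the mechanism is identical.

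Two points need repair. First, in your continuity argument the threshold being crossed is not $F=f(\psi_1^c(u_\ell))$ but $F=f(\psi_1^f(u_\ell))$, i.e.\ the $\mathcal{D}_1$/$\mathcal{D}_2$ boundary for $(u_\ell,u_r)\in\Oc\times\Of$: crossing $F=f(\psi_1^c(u_\ell))$ while staying in $\mathcal{D}_2$ produces no discontinuity, since there $f(\hat u)=\min\{F,f(\psi_1^c(u_\ell))\}$ and $\hat u$ depend continuously on the data; the jump you correctly describe (trace flow falling from $F$ on the unconstrained side to $f(\psi_1^c(u_\ell))<F$ on the constrained side) occurs at the other endpoint of the interval in \eqref{eq:special3}. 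Second, in the $\mathcal{D}_1$ branch of your \eqref{P2} argument, the inference ``$f(u_m)\le F$, hence neither sub-problem activates the constraint'' is not valid: membership in $\mathcal{D}_1$ is a condition on the trace of the solution at $x=0$, not on $f(u_m)$, and for instance $(u_m,u_r)\in\Oc^2$ can lie in $\mathcal{D}_2$ even when $f(u_m)\le F$. This is why the paper's reduction keeps all triples with \emph{at least one} pair in $\mathcal{D}_2$ and disposes of them through the no-contact-discontinuity observation and the explicit list of admissible $u_m$; your plan acknowledges that this bookkeeping must be done, so the gap is in the stated shortcut rather than in the overall strategy.
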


The next proposition is devoted to the minimal invariant domains for $\mathcal{S}_F$.

\begin{proposition}
Let $\mathcal{S}_F$ be the solver introduced in Definition~\ref{def:04}.

\begin{enumerate}[label={(IS.\arabic*)},leftmargin=*]\setlength{\itemsep}{0cm}%

\item 
The minimal invariant domain containing $\Of$ is $\mathcal{I}_f$ defined in \ref{I11} of Proposition~\ref{prop:IDR}.

\item 
The minimal invariant domain containing $\Oc$ is
\[
\mathcal{I}_c \doteq 
\begin{cases}
\Oc \cup \left\{ \bigl( F/V_f, Q(F/V_f) \bigr)\right\} &\text{if } F<f(u^c_-),\\
\Oc &\text{if } F\ge f(u^c_-).
\end{cases}
\]
\end{enumerate} 
\end{proposition}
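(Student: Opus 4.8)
The plan is to prove the two claims about minimal invariant domains for $\mathcal{S}_F$ separately, in each case showing first that the proposed set $\mathcal{I}$ is invariant (i.e.\ $\mathcal{S}_F[\mathcal{I},\mathcal{I}](\R)\subseteq\mathcal{I}$) and then that no proper subset containing $\Of$ (resp.\ $\Oc$) is invariant. The whole argument is a case analysis driven by the definition of $\mathcal{S}_F$ in Definition~\ref{def:04}, together with the explicit description of the wave structure of $\mathcal{S}$ in (S.1)--(S.4) and the selection rules for $\hat u,\check u$; the crucial auxiliary facts are the general properties \eqref{gen.properties} of $\hat u,\check u$, assumption \eqref{H1} (first Lax curves strictly decreasing, so the values taken by a $1$-wave between $u$ and $\psi_1^{f,c}(u)$ stay on the arc of the Lax curve), and the invariance of $\Of$ and $\Oc$ under the unconstrained solvers $\mathcal{R},\mathcal{S}$.

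For (IS.1): since $\mathcal{I}_f$ is already shown to be the minimal invariant domain for $\mathcal{R}_F$ containing $\Of$ in \ref{I11} of Proposition~\ref{prop:IDR}, the work is to check that the extra waves that $\mathcal{S}_F$ produces relative to $\mathcal{R}_F$ do not enlarge the needed domain. By Remark~\ref{rem:special}, $\mathcal{S}_F$ and $\mathcal{R}_F$ differ only on the pairs satisfying \eqref{eq:specialRS1}--\eqref{eq:specialRS3} or \eqref{eq:special1}--\eqref{eq:special3}; in all these the left state has an $\Of$-component or an $\Oc$-component whose Lax curve values have already been accounted for in $\mathcal{I}_1\cup\mathcal{I}_2$, and the new states ($\psi_1^f,\psi_1^c,u_-^c$, the intermediate states of rarefactions) all lie in $\Of\cup\mathcal{I}_1\cup\mathcal{I}_2$ because they sit on first Lax curves between a free state and a state with $f\le F$, or on the $v=V_c$ level set which is inside $\Oc$. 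Hence $\mathcal{I}_f$ is invariant for $\mathcal{S}_F$; minimality is inherited from the $\mathcal{R}_F$ case since $\mathcal{S}_F$ restricted to $\Of^2$ agrees with $\mathcal{R}$ and already forces $\mathcal{I}_1$ (take $u_r\in\Of$ with $f(u_\ell)>F$, producing $\hat u\in\mathcal{I}_1$) and $\mathcal{I}_2$ (the $1$-rarefaction case).

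For (IS.2): if $u_\ell,u_r\in\Oc$ then $\mathcal{S}[u_\ell,u_r]=\mathcal{R}[u_\ell,u_r]$ takes values in $\Oc$, and when $(u_\ell,u_r)\in\mathcal{D}_2$ the states $\hat u,\check u$ satisfy $f(\hat u)=f(\check u)\le F$ with $w(\hat u)=w(u_\ell)$ and $v(\check u)=v_r$ (since $u_r\in\Oc$ forces $f(\psi_2^-(u_r))\le F$ in the relevant range, or one checks directly), so both lie in $\Oc$; thus $\mathcal{S}_F[\Oc,\Oc](\R)\subseteq\Oc$, and the only way to leave $\Oc$ is via a vacuum-type state on $v=V_f$, which arises exactly when $u_r\in\Of$ (impossible here) — so for $F\ge f(u_-^c)$ no new point is needed and $\Oc$ itself is invariant, whereas for $F<f(u_-^c)$ one must check that the single point $(F/V_f,Q(F/V_f))$ can be reached and that adding it yields an invariant set: reachability is by taking $u_\ell\in\Oc$, $u_r\in\Of$ with $\mathcal{D}_2$, giving $\check u$ with $v(\check u)=V_f$, $f(\check u)=F$, i.e.\ $\check u=(F/V_f,Q(F/V_f))$; invariance of $\Oc\cup\{\check u\}$ follows because from $\check u\in\Of$ the solver behaves as in the free phase and any further constrained resolution reproduces states in $\Oc\cup\{\check u\}$ by the same computation; minimality is immediate as the point must belong to any invariant superset of $\Oc$ by the reachability argument. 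The main obstacle, and the step deserving the most care, is verifying that in the borderline configurations \eqref{eq:special1}--\eqref{eq:special3} the newly selected $\hat u$ (which has $v(\hat u)=V_c$ and $f(\hat u)<F$) stays inside $\Oc$ rather than escaping it — this uses that $\hat u$ lies on the first Lax curve through $u_\ell$ at the level $v=V_c$, i.e.\ $\hat u=\psi_1^c(u_\ell)\in\Oc$ when $u_\ell\in\Oc\cup\Of^+$, and the definition $u_-^c\in\Oc$ in the $\Of^-$ case — and that no configuration produces a free state other than the single vacuum point tabulated above.
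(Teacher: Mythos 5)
Your overall strategy is the one the paper intends: the authors omit this proof, stating only that it is analogous to that of Proposition~\ref{prop:IDR}, and your plan of re-running that argument while isolating the configurations where $\mathcal{S}_F$ differs from $\mathcal{R}_F$ (via Remark~\ref{rem:special}) is the right adaptation. However, two steps as written would not survive being made precise.

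First, in (IS.2) your minimality argument reaches the point $\bigl(F/V_f,Q(F/V_f)\bigr)$ ``by taking $u_\ell\in\Oc$, $u_r\in\Of$''. That is not admissible: to show that every invariant domain containing $\Oc$ must contain this point you may only use Riemann data in $\Oc^2$. The correct generator is a pair $u_\ell,u_r\in\Oc$ with $f(u_*(u_\ell,u_r))>F$ and $f(\psi_2^-(u_r))>F$; since $f(\psi_2^-(u_r))\le f(u_-^c)$ for all $u_r\in\Oc$, such a pair exists precisely when $F<f(u_-^c)$, and then $v(\check u)=V_f$, $f(\check u)=F$ produces the exit point, which lies in $\Of^-$ because $F<f(u_-^c)<V_f\,\sigma_-^f$ by \eqref{H1}. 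This also corrects your earlier assertion that the only way to leave $\Oc$ ``arises exactly when $u_r\in\Of$ (impossible here)'': the exit is triggered by $u_r\in\Oc$ with $f(\psi_2^-(u_r))>F$, and it is exactly this mechanism that yields the dichotomy $F<f(u_-^c)$ versus $F\ge f(u_-^c)$ in the statement.

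Second, in (IS.1) you justify that the states produced in the cases \eqref{eq:special1}--\eqref{eq:special3} (where $\hat u=\psi_1^c(\cdot)$, $v(\hat u)=V_c$ and $f(\hat u)<F$) remain in $\mathcal{I}_f$ ``because they sit \dots\ on the $v=V_c$ level set which is inside $\Oc$''. Being in $\Oc$ is not enough: $\mathcal{I}_f\cap\Oc=\mathcal{I}_1\cup\mathcal{I}_2$, a state with $f<F$ cannot be in $\mathcal{I}_2$, and membership in $\mathcal{I}_1$ requires $F\le f(\psi_2^+(\hat u))=f(u_+^c)=V_c\,\sigma_+^c$, which is a genuine restriction since \eqref{H1} gives $V_c\,\sigma_+^c<V_f\,\sigma_+^f$ while $F$ is only assumed to lie in $(0,V_f\,\sigma_+^f)$. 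You must either verify this inequality in each of those cases or enlarge the candidate domain accordingly; as written, the invariance of $\mathcal{I}_f$ under $\mathcal{S}_F$ is asserted rather than proved precisely where $\mathcal{S}_F$ departs from $\mathcal{R}_F$, which is the only content this proposition adds over \ref{I11} of Proposition~\ref{prop:IDR}.
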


\noindent The proof is omitted since it is analogous to that of Proposition \ref{prop:IDR}.



\section{Total variation estimates}\label{sec:tv}


Consider any of the PT models previously introduced. 
Fix a couple $(u_\ell,u_r)$ of initial states in $\Omega^2$ and consider $u_1 \doteq \mathcal{R}_F[u_\ell,u_r]$ and $u_2\doteq \mathcal{S}_F[u_\ell,u_r]$.
In this section, we study the total variation of $u_1$ and $u_2$ in the $v$ and $w$ coordinates.
More precisely, for $i=1,2$ we study the sign of
\begin{align*}
&\Delta\tv_v^i \doteq \tv(v(u_i)) - |v(u_\ell)-v(u_r)|,
&\Delta\tv_w^i \doteq \tv(w(u_i)) - |w(u_\ell)-w(u_r)|.
\end{align*}
This study may be useful to compare the difficulty of applying the two Riemann solvers in a wave-front tracking scheme; see \cite{HoldenRisebroWFT} and the references therein.
The choice of the Riemann invariant coordinates (rather than the conserved variables) stems from the fact that in these coordinates the total variation of both $\mathcal{R}[u_\ell,u_r]$ and $\mathcal{S}[u_\ell,u_r]$ does not increase; see \cite{AndreianovDonadelloRazafisonRollandRosiniNHM2016, AndreianovDonadelloRosiniM3ASS2016} where this property is exploited to prove existence results for the ARZ model and \cite{BenyahiaRosini01} for the PT$^p$ model.

\begin{proposition}\label{prop:tvestimate}
If $(u_\ell, u_r)$ is a couple of initial states in $\Omega^2$, then for $i=1,2$ we have that $\Delta\tv_w^i = 0 = \Delta \tv_v^i$ if and only if $(u_\ell, u_r)$ belongs to
\begin{align*}
\mathcal{D}_1 &\cup
\{(u_\ell, u_r) \in \Oc^2 \,:\, f(\psi_2^-(u_r)) \le F,\ v(u_\ell) \le v(\hat{u}),\ w(u_r) \le w(\check{u})\}
\\&\cup
\{(u_\ell, u_r) \in \Oc^- \times \Of^- \,:\, v(u_\ell) \le v(\hat{u}),\ w(u_r) \le w(\check{u})\}.
\end{align*}
In all the other cases, $\Delta\tv_w^i$ and $\Delta \tv_v^i$ are non-negative and $\Delta\tv_w^i + \Delta \tv_v^i$ is strictly positive.
\end{proposition}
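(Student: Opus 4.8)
The plan is to analyze $\Delta\tv_v^i$ and $\Delta\tv_w^i$ separately via a case-by-case inspection that mirrors the structure of Definitions~\ref{def:01} and \ref{def:04}. First I would dispose of the trivial case $(u_\ell,u_r)\in\mathcal{D}_1$: there $u_i=\mathcal{R}[u_\ell,u_r]$ (resp.\ $\mathcal{S}[u_\ell,u_r]$), and the remark already invoked in the section—that $\mathcal{R}$ and $\mathcal{S}$ do not increase total variation in the $(v,w)$ coordinates, together with the fact that each of $v$ and $w$ is \emph{monotone} along the wave fan produced by the unconstrained solvers (a $1$-wave moves only $w$, a $2$-contact moves only $v$, and a phase transition changes both monotonically from $u_\ell$ toward $u_r$)—forces $\tv(v(u_i))=|v(u_\ell)-v(u_r)|$ and $\tv(w(u_i))=|w(u_\ell)-w(u_r)|$, hence $\Delta\tv_v^i=\Delta\tv_w^i=0$. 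So the content is entirely in $\mathcal{D}_2$.

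For $(u_\ell,u_r)\in\mathcal{D}_2$, the solution has the structure: a left fan from $u_\ell$ to $\hat u$, a (zero-speed) jump from $\hat u$ to $\check u$ sitting at $x=0$, and a right fan from $\check u$ to $u_r$. Since $f(\hat u)=f(\check u)\le F<f(u_\ell^{\,\mathrm{admissible}})$ forces both $\hat u,\check u\in\Oc$, and since $\hat u$ lies on the first Lax curve through $\max\{w(u_\ell),w_-\}$ while $\check u$ lies on the second Lax curve through $v(\check u)\in\{V_{f},v_r\}$, I would compute each of the four TV pieces:
\begin{gather*}
\tv(w(u_i)) = \bigl(\text{left-fan $w$-variation from }u_\ell\text{ to }\hat u\bigr) + |w(\hat u)-w(\check u)| + \bigl(\text{right-fan from }\check u\text{ to }u_r\bigr),
\end{gather*}
and similarly for $v$. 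The left fan from $u_\ell$ to $\hat u$ is (a phase transition and then) a $1$-wave, so its $w$-variation is $|w(u_\ell)-w(\hat u)|$ and its $v$-variation accounts for the drop $v(u_\ell)\to V$ (or $\to v(\hat u)$) through the transition; the $\hat u\to\check u$ jump contributes $|w(\hat u)-w(\check u)|$ to the $w$-TV and $|v(\hat u)-v(\check u)|$ to the $v$-TV; the right fan is a $1$-wave followed by a $2$-contact, so it contributes $|w(\check u)-w(u_r)|$ in $w$ and $|v(\check u)-v(u_r)|$ in $v$. Using the sign relations from \eqref{gen.properties}—namely $w(u_\ell)>w(\check u)$, $v(u_r)>v(\hat u)$, $w(\hat u)=\max\{w(u_\ell),w_-\}\ge w(u_\ell)$, and $v(\hat u)=v(\check u)$ is false in general but $v(\check u)\in\{V,v_r\}$ is explicit—I would collapse these sums by the triangle inequality and check exactly when equality holds. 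Equality in the $w$-estimate requires that the excursion $w(u_\ell)\le w(\hat u)$ be "undone" monotonically, i.e.\ $w_-\le w(u_\ell)$ (so $w(\hat u)=w(u_\ell)$, killing the left-fan $w$-variation) and $w(u_r)\le w(\check u)$ (so the chain $w(u_\ell)>w(\check u)\ge w(u_r)$ is monotone); equality in the $v$-estimate requires $v(u_\ell)\le v(\hat u)$ and $v(\check u)=v_r$, i.e.\ $f(\psi_2^-(u_r))\le F$. Intersecting these conditions with the three admissible sub-cases of $\mathcal{D}_2$ ($\Oc^2$, $\Oc^-\times\Of^-$, $\Of^-\times\Oc^-$, plus the $\Of^2$ and $\Of^+$-type cases for $\mathcal{S}_F$) yields exactly the two extra families listed in the statement: in $\Of^2$ and in $\Of^-\times\Oc$ one has $w(u_\ell)<w_-$ hence $w(\hat u)=w_->w(u_\ell)$ and the left-fan $w$-variation is strictly positive, excluding them; in $\Oc^-\times\Of^-$ and in the displayed subset of $\Oc^2$ all four equality conditions can simultaneously hold.

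The main obstacle I expect is bookkeeping the $v$-variation through the phase-transition part of the left fan and through cases \ref{R4b}/\ref{S3}, where an extra $1$-rarefaction appears between $u_p$ (or $u_-^c$) and the target: there the $v$-coordinate is not monotone along the whole left fan a priori, so I must verify that $v$ decreases monotonically from $u_\ell$ across the transition down to $v(\hat u)$ and that the inserted rarefaction does not reverse it—this uses \eqref{H1} (first Lax curves strictly decreasing, so $v$ is monotone along them) and the explicit geometry of $\psi_2^-$, $\psi_1$. A secondary subtlety is that for $\mathcal{S}_F$ the selected $\hat u,\check u$ may satisfy $f(\hat u)<F$ strictly and $v(\hat u)=V_c$ (Remark~\ref{rem:special}); but since $V_c\le V_f$ and the $\Of^2$/$\Of^+$ cases already fail the $w$-equality test (as $u_\ell\in\Of$ gives $w(\hat u)=w_-<w(u_\ell)$ when $u_\ell\notin\Of$... more precisely one checks $w(u_\ell)$ vs.\ $w_-$ directly), these cases fall outside the equality set and only need the non-negativity and strict-positivity-of-the-sum conclusion, which follows from the same triangle-inequality decomposition since at least one of the two excursions is strict. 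Once the equality characterization is pinned down, non-negativity of each $\Delta\tv$ and strict positivity of the sum in all remaining cases is immediate from the decomposition, because every term is an absolute value and, outside the equality set, at least one triangle inequality is strict.
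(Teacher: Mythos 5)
Your plan coincides with the paper's proof, which is exactly this case-by-case computation of $\Delta\tv_v$ and $\Delta\tv_w$ in the Riemann-invariant coordinates across the left fan $u_\ell\to\hat u$, the stationary nonclassical shock at $x=0$, and the right fan $\check u\to u_r$, with the triangle inequality pinning down the equality cases listed in the statement. One small imprecision: for $u_\ell\in\Of^+$ one has $w(u_\ell)\ge w_-$, so the exclusion of $\Of^+\times\Of$ from the equality set does not come from the $w$-side as you suggest but from $\Delta\tv_v=2(V-v(\hat u))>0$ — which your general criterion $v(u_\ell)\le v(\hat u)$ already captures, since $v(u_\ell)=V>v(\hat u)$ whenever $u_\ell\in\Of$ and $(u_\ell,u_r)\in\mathcal{D}_2$.
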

\begin{proof}
It is easy to see that $\Delta\tv_w = 0 = \Delta \tv_v$ for all $(u_\ell,u_r)\in\mathcal{D}_1$.
Fix therefore $(u_\ell, u_r) \in \mathcal{D}_2$.
We consider only the case $i=1$, being the case $i=2$ analogous.
For notational simplicity we drop the superscript and let $(v_\ell, w_\ell) \doteq (v(u_\ell), w(u_\ell))$, $(v_r, w_r) \doteq (v(u_r), w(u_r))$, $\hat{v} \doteq v(\hat{u})$ and $\check{w} \doteq w(\check{u})$.
According to Definition~\ref{def:01} we have to distinguish the following cases.

\begin{itemize}[itemindent=*,leftmargin=0pt]\setlength{\itemsep}{0cm}%

\item 
If $u_\ell \in \Of^-$ and $u_r \in \Of$, then
\begin{align*}
\Delta\tv_v &= 2(V-\hat{v}) > 0,&
\Delta\tv_w &= 2 w_--w_\ell-\check{w}+|w_r-\check{w}| - |w_r-w_\ell|
\ge
2(w_--w_\ell)>0.
\end{align*}

\item 
If $u_\ell \in \Of^+$ and $u_r \in \Of$, then
\begin{align*}
\Delta\tv_v &= 2(V-\hat{v}) > 0,&
\Delta\tv_w &= w_\ell-\check{w}+|w_r-\check{w}| - |w_r-w_\ell|
\ge0.
\end{align*}

\item 
If $u_\ell,u_r\in\Oc$ and $f(\psi_2^-(u_r)) > F$, then
\begin{align*}
\Delta\tv_v &= |v_\ell-\hat{v}|+ 2V-\hat{v}-v_r - |v_\ell-v_r| \ge 2(V-v_r) \ge 0,\\
\Delta\tv_w &= w_\ell + w_r - 2\check{w} - |w_r-w_\ell|
=2\left(\min\{w_\ell,w_r\} - \check{w}\right) >0.
\end{align*}

\item 
If $u_\ell,u_r\in\Oc$ and $f(\psi_2^-(u_r)) \le F$, then
\begin{align*}
\Delta\tv_v &= |v_\ell-\hat{v}| + (v_r-\hat{v}) - |v_r-v_\ell| \ge 0,&
\Delta\tv_w &= (w_\ell-\check{w})+|w_r-\check{w}| - |w_r-w_\ell| \ge 0.
\end{align*}

\item 
If $u_\ell\in \Oc^-$ and $u_r\in \Of^-$, then  
\begin{align*}
\Delta\tv_v&=|v_\ell-\hat{v}|+ v_\ell-\hat{v}\ge0,&
\Delta\tv_w&=(w_\ell-\check{w})+|\check{w}-w_r|- |w_r-w_\ell| \ge 0.
\end{align*}

\item 
If $u_\ell\in \Of^-$ and $u_r\in \Oc^-$, then 
\begin{align*}
\Delta\tv_v&=2(V-\hat{v})>0,& 
\Delta\tv_w=2(w_--\check{w})>0.&\qedhere
\end{align*}
\end{itemize}
\end{proof}

In the next proposition, we compare the total variation of $u_1$ with that of $u_2$.
To do so we have to fix $(u_\ell, u_r)$ in the intersection of the domains of definition for both $\mathcal{R}_F$ and $\mathcal{S}_F$.
However, under the natural assumption that $V = V_f > V_c$, the domain of definition for $\mathcal{R}_F$ strictly contains that for $\mathcal{S}_F$.

\begin{proposition}
Let $V = V_f > V_c$.
If $(u_\ell,u_r)$ belongs to the domain of definition of $\mathcal{S}_F$, then $\Delta \tv_{v,w}^1 \le \Delta \tv_{v,w}^2$.
Moreover, we have:
\begin{itemize}[itemindent=*,leftmargin=0pt]\setlength{\itemsep}{0cm}%

\item 
$\Delta \tv_{v}^1 < \Delta \tv_{v}^2$ if and only if $(u_\ell, u_r)$  satisfies one of the conditions \eqref{eq:special1},\eqref{eq:special2};

\item
$\Delta \tv_{w}^1 < \Delta \tv_{w}^2$ if and only if $(u_\ell, u_r)$ is such that $w_r > w(\check{u}_2)$ and it satisfies one of the conditions \eqref{eq:special1},\eqref{eq:special2},\eqref{eq:special3}.
\end{itemize}
\end{proposition}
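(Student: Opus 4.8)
The plan is to leverage Remark~\ref{rem:special}, which already pins down exactly when $\mathcal{S}_F[u_\ell,u_r]$ differs from $\mathcal{R}_F[u_\ell,u_r]$ on the domain of definition of $\mathcal{S}_F$: either $(u_\ell,u_r)\in\mathcal{D}_1$ satisfies one of \eqref{eq:specialRS1},\eqref{eq:specialRS2},\eqref{eq:specialRS3}, or $(u_\ell,u_r)\in\mathcal{D}_2$ satisfies one of \eqref{eq:special1},\eqref{eq:special2},\eqref{eq:special3}. Outside these cases $u_1=u_2$ and there is nothing to prove, so the inequality $\Delta\tv_{v,w}^1\le\Delta\tv_{v,w}^2$ is trivially an equality. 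Thus the proof reduces to a finite case analysis over these special configurations. First I would dispose of the $\mathcal{D}_1$ cases \eqref{eq:specialRS1}--\eqref{eq:specialRS3}: there $u_1=\mathcal{R}[u_\ell,u_r]$ and $u_2=\mathcal{S}[u_\ell,u_r]$, and by Proposition~\ref{prop:tvestimate} (applied with the unconstrained solvers, i.e.\ the observation recalled at the start of Section~\ref{sec:tv} that $\tv$ does not increase in the $(v,w)$ coordinates for both $\mathcal{R}$ and $\mathcal{S}$) one gets $\Delta\tv_v^i=\Delta\tv_w^i=0$ for $i=1,2$, so equality holds and none of the strict inequalities occur — consistent with the claim, since conditions \eqref{eq:special1}--\eqref{eq:special3} force $(u_\ell,u_r)\in\mathcal{D}_2$.

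The substance is therefore the three cases \eqref{eq:special1}, \eqref{eq:special2}, \eqref{eq:special3}, all with $u_r\in\Of$ and $(u_\ell,u_r)\in\mathcal{D}_2$. In each, $\mathcal{R}_F$ selects $\hat{u}_1,\check{u}_1$ with $f(\hat{u}_1)=f(\check{u}_1)=F$ and $v(\check{u}_1)=V$ (using \eqref{gen.properties}, since $u_r\in\Of$), whereas $\mathcal{S}_F$ selects $\hat{u}_2,\check{u}_2$ with $f(\hat{u}_2)=f(\check{u}_2)<F$ and $v(\hat{u}_2)=V_c$ (by Remark~\ref{rem:special}), hence $v(\check{u}_2)=V_f=V$ as well. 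I would then write out $\Delta\tv_v^i$ and $\Delta\tv_w^i$ explicitly via the wave-structure formulas already used in the proof of Proposition~\ref{prop:tvestimate} — with the relevant difference being the $v$-jump at the constraint, $V-v(\hat{u}_i)$, and the $w$-jump $w(\check{u}_i)-w_r$. For the $v$-estimates: in cases \eqref{eq:special1},\eqref{eq:special2} one has $u_\ell\in\Of$, so $w(\hat{u}_1)=\max\{w(u_\ell),w_-\}=w(\hat{u}_2)$ (both selection rules agree on $w$ of $\hat u$ when $u_\ell\in\Of^+$, and both give $w_-$ when $u_\ell\in\Of^-$), while $f(\hat u_1)=F>f(\hat u_2)$ forces $v(\hat{u}_1)>v(\hat{u}_2)$ along the first Lax curve where $v$ is increasing in $f$; since $\Delta\tv_v^i=2(V-v(\hat{u}_i))+(\text{terms equal for }i=1,2)$, this yields $\Delta\tv_v^1<\Delta\tv_v^2$. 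In case \eqref{eq:special3}, $u_\ell\in\Oc$ and one checks $v(\hat u_1)=v(\hat u_2)=V_c$ (both constraint points lie on the $v=V_c$ level since $f(\psi_1^c(u_\ell))<F\le f(\psi_1^f(u_\ell))$ forces the selected point to be $\psi_1^c$ up to the flux cap), so the $v$-variations coincide — matching the claim that strict inequality in $v$ holds only in \eqref{eq:special1},\eqref{eq:special2}.

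For the $w$-estimates the comparison hinges on $w(\check{u}_1)$ vs.\ $w(\check{u}_2)$: since $f(\check{u}_1)=F>f(\check{u}_2)$ and both lie on the curve $v=V$ (namely in $\Of$, where $q=Q(\rho)$), one has $\rho(\check u_1)>\rho(\check u_2)$ hence $w(\check u_1)>w(\check u_2)$ (recall $w=q/\rho$ and $w\mapsto\rho_1^f(w)$ is monotone). The quantity $\Delta\tv_w^i$ contains a term $|w_r-w(\check{u}_i)|$; when $w_r\le w(\check u_2)<w(\check u_1)$ both reduce to $w(\check u_i)-w_r$ with matching remaining terms, giving equality, whereas when $w_r>w(\check u_2)$ the two split differently and one gets the strict inequality $\Delta\tv_w^1<\Delta\tv_w^2$ — precisely the stated criterion. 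I would carry this out case by case, each time substituting the explicit $\hat u_i,\check u_i$ and simplifying the absolute values using the sign of $w_r-w(\check u_i)$ and $v_\ell-v(\hat u_i)$. The main obstacle I anticipate is bookkeeping rather than depth: one must be careful that the wave patterns of $\mathcal{R}_F$ and $\mathcal{S}_F$ genuinely produce the claimed jump lists in each special case (in particular that $\mathcal{S}_F$ inserts an extra $\Of^+$-to-$\Oc$ phase transition or the $\psi_1^c$-to-$\psi_1^f$ step absent from $\mathcal{R}_F$), and that the strict-inequality thresholds $w_r>w(\check u_2)$ and membership in \eqref{eq:special1}--\eqref{eq:special3} are exactly the complement of the equality cases — i.e.\ verifying no borderline configuration is mis-sorted. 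Once the jump structures are correctly tabulated, each inequality follows from the monotonicity of $f\mapsto v$ and $f\mapsto w$ along the relevant Lax curves together with the triangle inequality for the absolute-value terms.
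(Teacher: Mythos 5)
Your strategy is the paper's: Remark~\ref{rem:special} reduces the comparison to the three $\mathcal{D}_2$-configurations \eqref{eq:special1}--\eqref{eq:special3} (on $\mathcal{D}_1$ Proposition~\ref{prop:tvestimate} already gives $\Delta\tv_v^i=\Delta\tv_w^i=0$ for both solvers, so the specialRS cases contribute nothing), and then one compares the explicit jump lists; your formulas for $\Delta\tv_v^1-\Delta\tv_v^2=2(\hat{v}_2-\hat{v}_1)$ in \eqref{eq:special1},\eqref{eq:special2}, your ordering $w(\check{u}_1)>w(\check{u}_2)$, and your case split on the sign of $w_r-w(\check{u}_i)$ all reproduce the paper's computation and yield the stated characterizations of strictness.

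One intermediate claim is wrong, although the conclusion it supports survives. In case \eqref{eq:special3} you assert $v(\hat{u}_1)=v(\hat{u}_2)=V_c$, arguing that the flux cap forces both selected points onto $\psi_1^c(u_\ell)$. That is false for $\mathcal{R}_F$: on $\mathcal{D}_2$ its selection rule imposes $f(\hat{u}_1)=F$ \emph{exactly}, on the Lax curve $w=w(u_\ell)$ extended up to $v=V_f$; since $F>f(\psi_1^c(u_\ell))$ and the flux is strictly decreasing in $\rho$ along that curve by \eqref{H1}, the point with flux $F$ lies strictly above the level $v=V_c$, i.e.\ $\hat{v}_1>\hat{v}_2=V_c$ (as the paper records). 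The equality $\Delta\tv_v^1=\Delta\tv_v^2$ nevertheless holds, but for a different reason: for $u_\ell\in\Oc$ and $u_r\in\Of$ one finds $\Delta\tv_v^i=|v_\ell-\hat{v}_i|+(v_\ell-\hat{v}_i)$, which vanishes whenever $\hat{v}_i\ge v_\ell$, and this is satisfied for both $i$ because $\hat{v}_1>\hat{v}_2=V_c\ge v_\ell$. Replace the claim ``$\hat{v}_1=V_c$'' by ``$\hat{v}_i\ge v_\ell$ for $i=1,2$'' and the argument closes exactly as in the paper.
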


\begin{proof}
By Proposition~\ref{prop:tvestimate} and Remark~\ref{rem:special}, it is sufficient to consider the following cases.
For brevity, we let $(v_\ell, w_\ell) \doteq (v(u_\ell), w(u_\ell))$, $(v_r, w_r) \doteq (v(u_r), w(u_r))$, $\hat{v}_i \doteq v(\hat{u}_i)$ and $\check{w}_i \doteq w(\check{u}_i)$ for $i=1,2$.
\begin{itemize}[itemindent=*,leftmargin=0pt]\setlength{\itemsep}{0cm}%

\item 
If $(u_\ell, u_r)$ satisfies \eqref{eq:special1} or \eqref{eq:special2}, then $\hat{v}_1 > \hat{v}_2 = V_c$, $\check{w}_1 > \check{w}_2$ and
\begin{align*}
\Delta\tv_v^1 - \Delta\tv_v^2 &= 2(V_c-\hat{v}_1) < 0,&
\Delta\tv_w^1 - \Delta\tv_w^2 &= \check{w}_2 - \check{w}_1 + |\check{w}_1 - w_r| - |\check{w}_2 - w_r| \le 0.
\end{align*}

\item 
If $(u_\ell, u_r)$ satisfies \eqref{eq:special3}, then $\hat{v}_1 > \hat{v}_2 = V_c$, $\check{w}_1 > \check{w}_2$ and
\begin{align*}
\Delta\tv_v^1 = \Delta\tv_v^2 & = 0,&
\Delta\tv_w^1 - \Delta\tv_w^2 &= \check{w}_2 - \check{w}_1 + |\check{w}_1 - w_r| - |\check{w}_2 - w_r| \le 0.
\end{align*}

\end{itemize}
It is easy to see that in both cases we have $\Delta\tv_w^1 - \Delta\tv_w^2 = 0$ if and only if $w_r \le \check{w}_2$.
\end{proof}

\section{Numerical example}\label{sec:simu01}


In this section, we apply the Riemann solvers $\mathcal{R}$ and $\mathcal{R}_F$ introduced in Section~\ref{sec:inter} to simulate the traffic across a toll gate.
More specifically, we consider a toll gate placed in $x=0$ and two types of vehicles: the $1$-vehicles characterized by the Lagrangian marker $w_1$ and the $2$-vehicles characterized by the Lagrangian marker $w_2$.
Assume that the $1$-vehicles and the $2$-vehicles are initially stopped and uniformly distributed respectively in $(x_1,x_2)$ and $(x_2,0)$, with $x_1<x_2<0$.
If $F \in (0,V_f \, \sigma_+^{f})$ is the capacity of the toll gate, then the resulting model is given by the Cauchy problem for \eqref{eq:system},\eqref{eq:const}, with piecewise constant initial datum
\[
u(0,x) =
\begin{cases}
u_1&\text{if }x \in (x_1,x_2),
\\
u_2&\text{if }x \in (x_2,0),
\\
(0,0)&\text{otherwise},
\end{cases}
\]
where $u_i \in \Omega_c$ is such that $v(u_i) = 0$ and $w(u_i) = w_i$ for $i=1,2$.

While the overall picture of the corresponding solution is rather stable, a detailed analytical study needs to consider many slightly different cases. 
Below, we restrict the construction of the solution corresponding to the PT$^0$ model with $V_f = V_c \doteq V$ and consider the situation where $w_{2}<0<w_{1}$ and $F \in (0,V\,\sigma_-^f)$. 
\begin{figure}
\centering{
\begin{psfrags}
      \psfrag{a}[l,c]{$f$}
      \psfrag{f}[l,B]{$F$}
      \psfrag{b}[l,B]{$\rho$}
      \psfrag{r}[c,B]{$R$}
      \psfrag{c}[c,c]{$\hat{u}_{1}~$}
      \psfrag{d}[l,c]{$\,\check{u}$}
      \psfrag{e}[l,B]{$\hat{u}_{2}$}
      \psfrag{g}[l,B]{$u_*$}
\includegraphics[width=.27\textwidth]{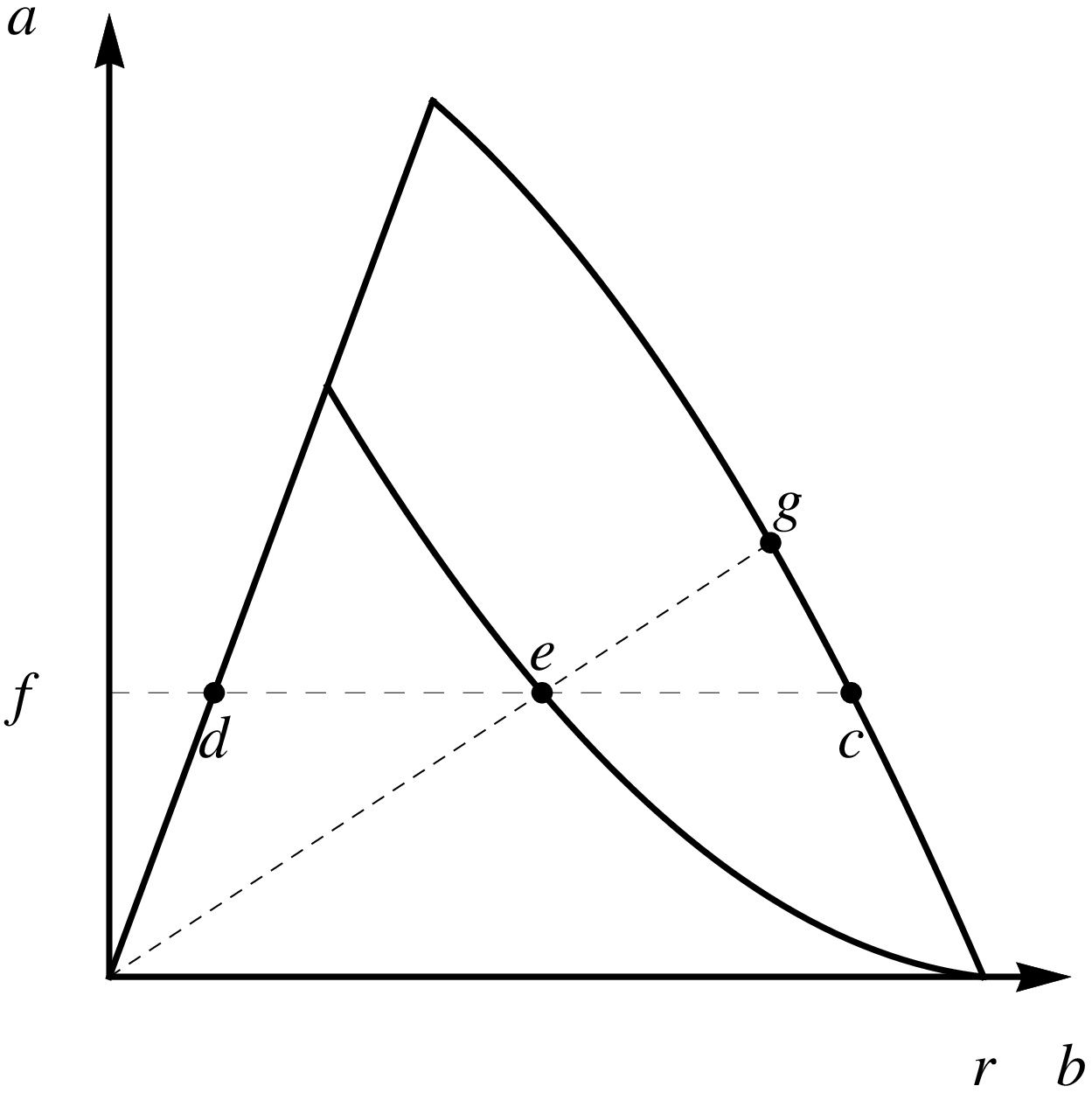}\qquad
      \psfrag{a}[c,B]{$x$}
      \psfrag{b}[l,c]{$t$}
      \psfrag{c}[c,B]{$x_{2}$}
      \psfrag{d}[c,B]{$x_{1}$}
      \psfrag{A}[c,b]{$a_{1}$}
      \psfrag{B}[c,B]{$a_{2}$}
      \psfrag{C}[r,b]{$a_{3}$}
      \psfrag{D}[l,c]{$a_{4}$}
      \psfrag{F}[r,b]{$a_{5}$}
      \psfrag{E}[l,c]{$a_{6}$}
\includegraphics[width=.27\textwidth]{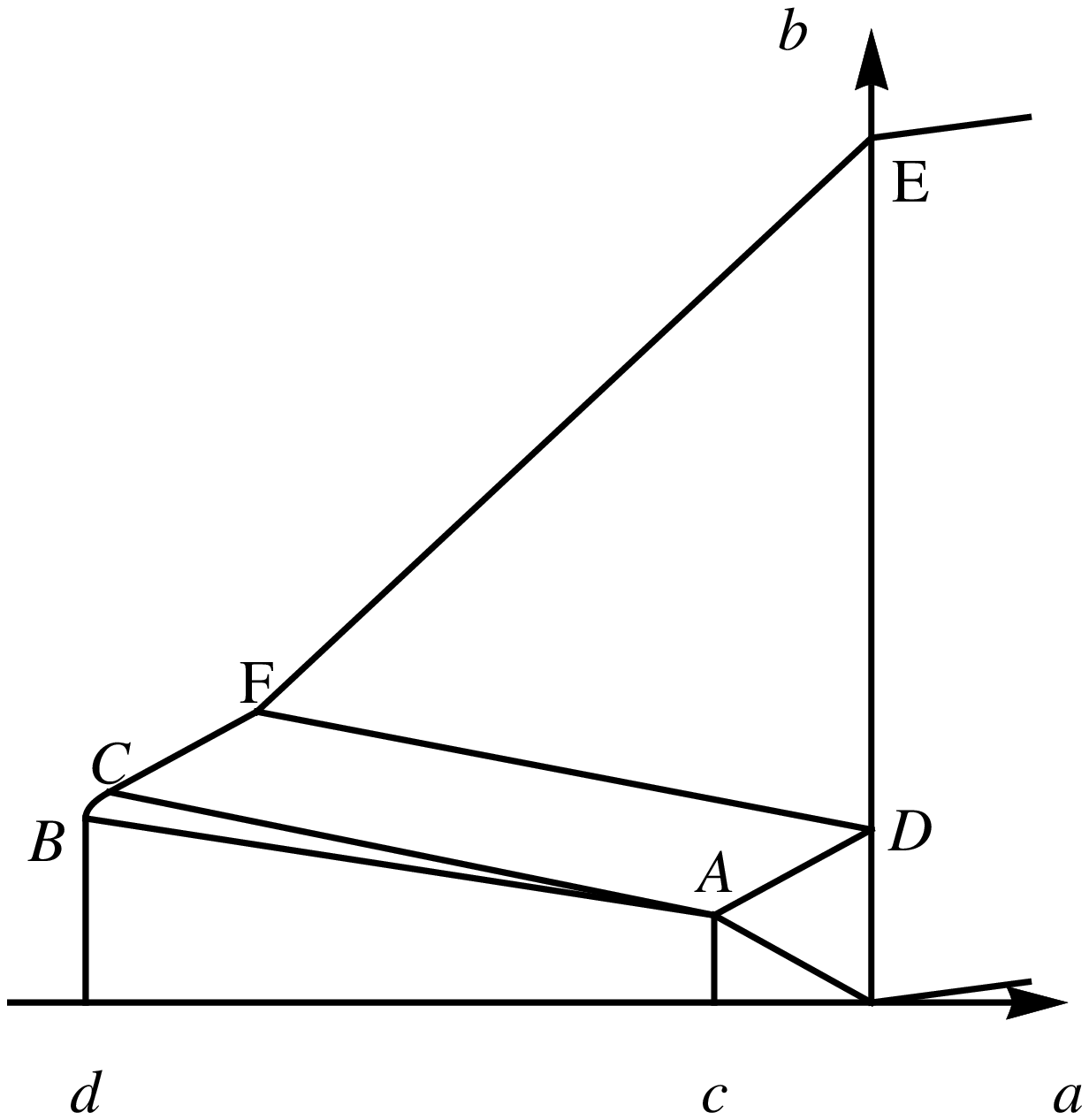}\qquad
      \psfrag{a}[c,c]{$x$}
      \psfrag{b}[c,c]{$t$}
      \psfrag{c}[c,B]{$x_{2}$}
      \psfrag{d}[c,B]{$x_{1}$}
      \psfrag{C}[c,B]{\tiny$\mathsf{PT}_{1}\quad$}
      \psfrag{D}[c,B]{\tiny$\mathsf{PT}_{2}\quad\,$}
      \psfrag{O}[c,B]{\tiny$\mathsf{PT}_{3}\quad$}
      \psfrag{G}[c,B]{\tiny$\mathsf{PT}_{4}\quad$}
      \psfrag{A}[c,c]{\tiny$\mathsf{C}_{1}$}
      \psfrag{J}[l,c]{\tiny$\mathsf{C}_{2}$}
      \psfrag{N}[c,B]{\tiny$\mathsf{C}_{3}$}
      \psfrag{Z}[c,b]{\tiny$\quad\mathsf{C}_{4}$}
      \psfrag{B}[l,B]{\tiny$\mathsf{S}_{1}$}
      \psfrag{F}[c,B]{\tiny$\mathsf{S}_{2}$}
      \psfrag{L}[l,c]{\tiny$\mathsf{U}_{1}$}
      \psfrag{M}[l,c]{\tiny$\mathsf{U}_{2}$}
      \psfrag{R}[c,c]{\tiny$\mathsf{R}$}
\includegraphics[width=.27\textwidth]{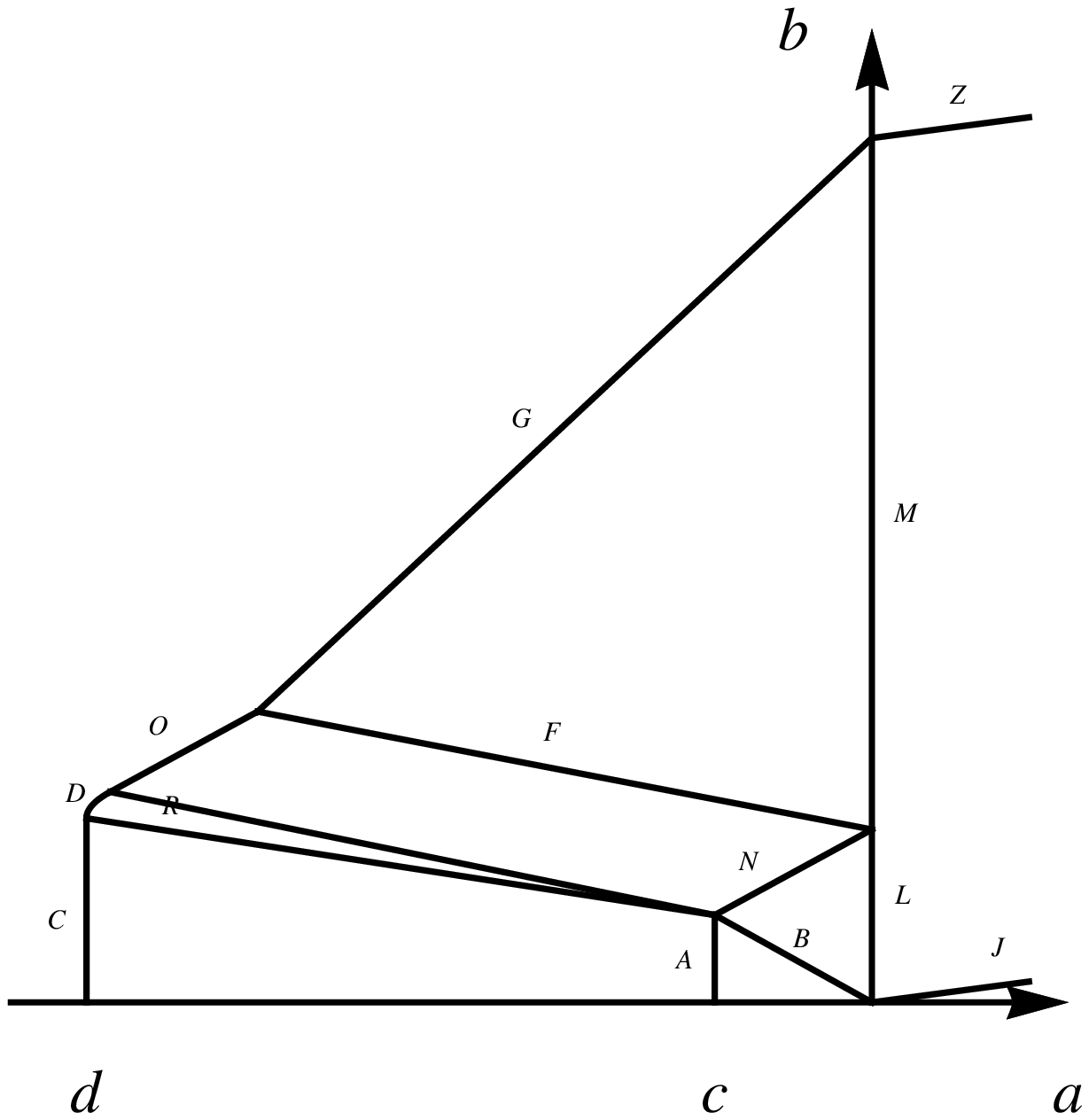}
\end{psfrags}}
\caption{The solution constructed in Section~\ref{sec:simu01} and corresponding to the numerical data~\eqref{nume.data}.}
\label{fig:simu01}
\end{figure}
In particular, this implies that $\rho \mapsto L_{w_{1}}(\rho)$ is concave and $\rho \mapsto L_{w_{2}}(\rho)$ is convex, see \figurename~\ref{fig:simu01}, left.
Moreover, for simplicity we normalize the maximal density and take $R=1$.

The solution is constructed by applying the wave-front tracking method \cite{HoldenRisebroWFT} based on the Riemann solver $\mathcal{R}$ away from $x = 0$ and on the constrained Riemann solver $\mathcal{R}_F$ at $x = 0$.
We use the following notation
\begin{align*}
&\hat{u}_1 \doteq \hat{u}(u_1,F),&
&\hat{u}_2 \doteq \hat{u}(u_2,F),&
&\check{u} \doteq \check{u}((0,0),F),&
&u_* \doteq u_*(u_1,\hat{u}_2).
\end{align*}
The first step in the construction of the solution is solving the Riemann problems at the points $(x,t) \in \{ (x_{1},0), (x_{2},0),(0,0)\}$. 
\begin{itemize}[leftmargin=*]\setlength{\itemsep}{0cm}%

\item The Riemann problem at $(x_{1},0)$ is solved by a stationary phase transition $\mathsf{PT}_{1}$ from $(0,0)$ to $u_1$.

\item The Riemann problem at $(x_{2},0)$ is solved by a stationary contact discontinuity $\mathsf{C}_{1}$ from $u_1$ to $u_2$.

\item The Riemann problem at $(0,0)$ is solved by a shock $\mathsf{S}_{1}$ from $u_2$ to $\hat{u}_{2}$ travelling with negative speed $\Lambda(u_2,\hat{u}_{2})$, a stationary undercompressive shock $\mathsf{U}_1$ from $\hat{u}_{2}$ to $\check{u}$, and a contact discontinuity $\mathsf{C}_2$ from $\check{u}$ to $(0,0)$ travelling with speed $V$.
\end{itemize}
To prolong the solution, we have to consider the Riemann problems arising at each interaction as follows.

\begin{itemize}[leftmargin=*]\setlength{\itemsep}{0cm}%

\item First,  $\mathsf{C}_{1}$ interacts with $\mathsf{S}_{1}$ at $a_{1} \doteq (x_2, t_{a_{1}})$, where $t_{a_{1}} \doteq x_{2}/\Lambda(u_2,\hat{u}_{2})$. The Riemann problem at $a_{1}$ is solved by a rarefaction $\mathsf{R}$ from $u_1$ to $u_*$ and a contact discontinuity $\mathsf{C}_{3}$ from $u_*$ to $\hat{u}_{2}$ travelling with speed $v(\hat{u}_2)$. 
The rarefaction $\mathsf{R}$ has support in the cone
\[
\mathcal{C} \doteq
\{ (x,t) \in \mathbb{R} \times \mathbb{R_{+}} \,:\, \lambda_{1}(u_1) (t-t_{a_{1}}) \leq x-x_{2} \leq \lambda_{1}(u_*) (t-t_{a_{1}}), ~ t \ge t_{a_{1}} \}.
\]

\item
$\mathsf{C}_{3}$ interacts with $\mathsf{U}_1$ at $a_4$.
The corresponding Riemann problem is solved by a shock $\mathsf{S}_2$ from $u_*$ to $\hat{u}_1$ travelling with negative speed $\Lambda(u_*,\hat{u}_1)$ and an undercompressive shock $\mathsf{U}_2$ from $\hat{u}_1$ to $\check{u}$.

\item $\mathsf{PT}_{1}$ interacts with the rarefaction $\mathsf{R}$ at $a_{2} \doteq (x_1,t_{a_{2}})$, where $t_{a_{2}} \doteq t_{a_{1}} + \frac{x_{1}-x_{2}}{\lambda_{1}(u_1}$.
As a result, a phase transition $\mathsf{PT}_{2}$ starts from $a_2$ and accelerates during its interaction with $\mathsf{R}$ according to the following ordinary differential equation
\begin{align*}
&\dot{x}(t)=v\bigl(\mathsf{R}\bigl(t,x(t)\bigr)\bigr),& x(t_{a_{2}})=x_{1},
\end{align*}
where, with an abuse of notation, we denoted by $\mathsf{R}(t,x)$ the value attained by the rarefaction $\mathsf{R}$ in $(t,x) \in \mathcal{C}$.

\item $\mathsf{PT}_{2}$ stops to interact with the rarefaction $\mathsf{R}$ once it reaches $a_{3} \doteq (x_{a_{3}},t_{a_{3}})$. 
Then, a phase transition $\mathsf{PT}_{3}$ from $(0,0)$ to $u_*$ and travelling with speed $v(u_*)$ starts from $a_3$.

\item $\mathsf{PT}_{3}$ interacts with $\mathsf{S}_{2}$ at $a_{5} \doteq (x_{a_{5}},t_{a_{5}})$.
The result of this interaction is a phase transition $\mathsf{PT}_{4}$ from $(0,0)$ to $\hat{u}_{1}$ travelling with speed $v(\hat{u}_{1})$.

\item $\mathsf{PT}_{4}$ interacts with $\mathsf{U}_2$ at $a_{6} \doteq (0,t_{a_{6}}) = (0, -x_1/F)$. Clearly, $t_{a_{6}}$ gives the time at which the last vehicle passes through $x=0$. Then, a contact discontinuity $\mathsf{C}_4$ from $(0,0)$ to $\check{u}$ and travelling with speed $V$ arises at $a_6$.
\end{itemize}

The simulation presented in \figurename~\ref{fig:simu01} is obtained by the explicit analysis of the wave-fronts interactions with computer-assisted computation of the interaction times and front slopes, and it corresponds to the following choice of the parameters
\begin{equation}\label{nume.data}
\begin{gathered}
a=0,\quad
R=1,\quad
\sigma=\frac{3}{10},\quad
w_{1}=-\frac{2}{5},\quad
w_{2}=\frac{3}{10},\quad
V_f = 1,\quad
x_{1}=-5,\quad
x_{2}=-1,\quad
F=\frac{3}{25}.
\end{gathered}
\end{equation}
Let us finally underline that this choice of the parameters ensures \eqref{H1} and \eqref{H2}.


\section{Technical details}\label{sec:tech}


In this section we collect the proofs concerning the properties of the Riemann solvers.

\subsection{Proofs of the main properties of \texorpdfstring{$\mathcal{R}$}{}}\label{sec:tec0}

In the following two lemmas we prove Proposition~\ref{prop:cc0}.

\begin{lemma}\label{lem:01}
The Riemann solver $\mathcal{R}$ is $\Lloc1$-continuous.
\end{lemma}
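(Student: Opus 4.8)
The plan is to prove $\mathbf{L^1_{loc}}$-continuity of $\mathcal{R}$ by showing that whenever $(u_\ell^n, u_r^n) \to (u_\ell, u_r)$ in $\Omega^2$, the corresponding solutions converge in $\mathbf{L^1_{loc}}(\R;\Omega)$. Since $\mathcal{R}$ is defined case by case (R.1)--(R.4b) according to the position of $(u_\ell,u_r)$ in $\Of^2$, $\Oc^2$, $\Oc^-\times\Of^-$ or $\Of^-\times\Oc^-$, the first step is to observe that the auxiliary maps $\psi_1$, $\psi_2^\pm$, $u_*$, $\Lambda$ and $\lambda_1$ are continuous on their domains (this follows from the smoothness of $v$ and the implicit function theorem applied to the defining relations, using \eqref{H1}--\eqref{H2}). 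Hence, away from the boundaries between the regions listed above, continuity of the wave speeds and of the intermediate states is immediate, and a self-similar solution made of finitely many waves depends continuously in $\mathbf{L^1_{loc}}$ on those data; I would dispatch each of the interior cases with this remark.

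The substantive part is the behaviour across the region boundaries, i.e.\ when the limit datum $(u_\ell, u_r)$ sits on an interface where the structure of $\mathcal{R}$ changes. The relevant transitions are: $u_\ell$ crossing $\sigma_-^f$ (boundary between $\Of^-$ and $\Of^+\subset\Of\cap\Oc$); $u_r$ crossing $\sigma_-^f$ similarly; $u_r$ approaching the boundary of $\Oc^-$; and the threshold $\Lambda(u_\ell,\psi_2^-(u_r)) = \lambda_1(\psi_2^-(u_r))$ separating (R.4) from (R.4b). For each such interface I would check that the two neighbouring definitions of $\mathcal{R}$ agree in the limit: e.g.\ at the (R.4)/(R.4b) threshold the point $u_p(u_\ell)$ coincides with $\psi_2^-(u_r)$ and the $1$-rarefaction in (R.4b) degenerates to a point, so (R.4b) limits onto (R.4) continuously; when $u_\ell \in \Of^-$ tends to $\Of^+$, the phase transition wave in (R.4) shrinks appropriately and matches the pure Lax solution in $\Oc$ given by (R.2) via $\psi_1$, using $\psi_1(u_\ell)=u_\ell$ on $\Of^+=\Of\cap\Oc$. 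This gluing, together with continuity of all speeds, gives sequential continuity at boundary data. The main obstacle is precisely this boundary bookkeeping: one must verify that no wave speed crosses $0$ discontinuously and that intermediate states match across every interface, which requires invoking \eqref{H1} (first-family speeds strictly negative, hence phase transitions and $1$-waves stay on the left) so that the self-similar profiles line up.

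Concretely, I would organize the proof as follows. First, record the continuity of $\psi_1,\psi_2^\pm,u_*,\Lambda,\lambda_1,\rho_1^{f,c},\rho_2^\pm$ and of the map sending a finite list of adjacent states with assigned wave types to the corresponding self-similar profile in $\mathbf{L^1_{loc}}$; this reduces everything to convergence of finitely many states and speeds. Second, treat the interior of each of the cases (R.1)--(R.4b) by direct substitution. Third, handle the finitely many interface configurations by the limiting/degeneration arguments sketched above, checking in each that the left piece and right piece of the solution (as in Definition~\ref{def:Colombo}) converge separately, so that $\mathcal{R}[u_\ell^n,u_r^n]\to\mathcal{R}[u_\ell,u_r]$. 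A minor point worth care is that self-similar functions are compared on compact subsets of $\R$ after dividing by $t$, so $\mathbf{L^1_{loc}}$-continuity at fixed $t>0$ is equivalent to $\mathbf{L^1}$-convergence of the profiles $x\mapsto\mathcal{R}[u_\ell,u_r](x)$; small changes in wave speeds then contribute $\mathbf{L^1}$-errors proportional to the speed perturbation times the jump size, which vanish in the limit. I expect the interface analysis, and in particular the (R.4)/(R.4b) degeneration, to be the only place where anything beyond routine continuity is needed.
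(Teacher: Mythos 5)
Your proposal is correct and follows essentially the same route as the paper: reduce everything to the continuity of the intermediate states and wave speeds (via $\psi_1,\psi_2^\pm,u_*,\Lambda,\lambda_1$), dispose of the cases where the wave structure is locally stable, and concentrate on the configurations where the structure of $\mathcal{R}$ changes, the crucial one being the threshold between \ref{R4} and \ref{R4b}, where $u_p(u_\ell)=\psi_2^-(u_r)$ and the $1$-rarefaction degenerates. The paper's proof is exactly this argument, specialized to the single-phase-transition case (including the vacuum subcase $\rho_\ell=0$, which your general framework also covers).
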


\begin{proof}
Let $(u^\varepsilon_\ell,u^\varepsilon_r) \to (u_\ell,u_r)$ in $\Omega$.
We have to prove that $\mathcal{R}[u_\ell^\varepsilon,u_r^\varepsilon] \to \mathcal{R}[u_\ell,u_r]$ in $\Lloc1$.
To do so, it is sufficient to consider the case where $\mathcal{R}[u_\ell,u_r]$ consists of just one phase transition, namely the case described in \ref{R4} of Definition~\ref{def:R}. 
Since $\mathcal{R}[u_\ell^\varepsilon,u_r^\varepsilon]$ is the juxtaposition of $\mathcal{R}[u_\ell^\varepsilon,\psi_2^-(u_r^\varepsilon)]$ and the $2$-contact discontinuity $\mathcal{R}[\psi_2^-(u_r^\varepsilon),u_r^\varepsilon]$, we are left to consider the following three cases.

\begin{itemize}[leftmargin=*]\setlength{\itemsep}{0cm}%

\item 
If $\rho_\ell = 0$, then it is sufficient to exploit the fact that $\psi_2^-(u_r^\varepsilon) \to \psi_2^-(u_r)$ to obtain that both $\Lambda(u_\ell^\varepsilon, \psi_2^-(u_r^\varepsilon))$ and $v(u_r^\varepsilon)$ converge to $v(u_r)$ and to be able to conclude.

\item 
If $\rho_\ell \ne 0$ and $u_r \ne u_p(u_\ell)$, then $w(u_r) = w_-$ and we can assume that $\mathcal{R}[u_\ell^\varepsilon,u_r^\varepsilon]$ is described by \ref{R4}.
Hence, it is sufficient to exploit the fact that $\psi_2^-(u_r^\varepsilon)\to u_r$ to obtain that $\Lambda(u_\ell^\varepsilon, \psi_2^-(u_r^\varepsilon)) \to \Lambda(u_\ell,u_r)$, $v(u_r^\varepsilon)\to v(u_r)$ and to be able to conclude.

\item 
If $\rho_\ell \ne 0$ and $u_r = u_p(u_\ell)$, then $w(u_r) = w_-$ and $\mathcal{R}[u_\ell^\varepsilon,u_r^\varepsilon]$ is described by either \ref{R4} or \ref{R4b}.
In the first case we can argue as in the previous case.
In the latter case, we exploit the fact that $u_p(u_\ell^\varepsilon) \to u_r$ and $\psi_2^-(u_r^\varepsilon)\to u_r$ to obtain that both $\Lambda(u_\ell^\varepsilon, u_p(u_\ell^\varepsilon))$ and $\lambda_1(\psi_2^-(u_r^\varepsilon))$ converge to $\Lambda(u_\ell,u_r) = \lambda_1(u_r)$ and to be able to conclude.
\qedhere
\end{itemize}
\end{proof}
%
%
%

\begin{lemma}\label{lem:02}
The Riemann solver $\mathcal{R}$ is consistent. 
\end{lemma}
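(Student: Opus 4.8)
The plan is to verify the two implications \eqref{P1} and \eqref{P2} of Definition~\ref{def:cons} directly, by going through the cases in Definition~\ref{def:R} according to which phases the states $u_\ell, u_m, u_r$ belong to. The key structural observation is that $\mathcal{R}$ is built by concatenating standard Lax solutions (for the scalar equation in $\Of$ and for the $2\times2$ Temple-type system in $\Oc$) together with a single phase transition wave, and that these building blocks are themselves consistent; so the work is mostly bookkeeping about wave speeds and about which case of Definition~\ref{def:R} applies to each pair. I would first record that, by \eqref{H1}, in the congested phase every $1$-wave has strictly negative speed and every $2$-wave has non-negative speed, while in the free phase the only wave is a contact discontinuity of speed $V>0$; hence the spatial ordering of waves in any $\mathcal{R}[u_\ell, u_r]$ is completely determined, and a phase transition separating $\Of$ from $\Oc$ always sits to the left of the congested $2$-wave.

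For \eqref{P1}, suppose $\mathcal{R}[u_\ell, u_r](\bar x) = u_m$. I would split on whether $\bar x$ falls in the region of $\Of$-values or of $\Oc$-values of the solution. If $u_m \in \Of$, then the portion of $\mathcal{R}[u_\ell, u_r]$ to the left of $\bar x$ is a piece of a scalar Lax solution (possibly preceded by a phase transition), and $\mathcal{R}[u_\ell, u_m]$ is obtained by truncating it, which is exactly consistency of the scalar Riemann solver; similarly $\mathcal{R}[u_m, u_r]$ reproduces the right portion since $u_m$ lies on the relevant Lax curve through $u_r$ (here one uses that $u_m \in \Of$ forces $v(u_m)=V$, so the intermediate state $u_*$ and the functions $\psi_2^-$, $\psi_1$ are unchanged). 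If $u_m \in \Oc$, the analogous argument uses consistency of the Lax solver for the congested system together with the fact that $w$ and $v$ are Riemann invariants: the $1$-wave fan from $u_\ell$ and the $2$-contact from $u_*(u_\ell,u_r)$ can each be truncated at $\bar x$, and one checks case by case (distinguishing whether $\bar x$ lands on the $1$-wave, on the intermediate constant state $u_*$, or on the $2$-contact) that the two halves reassemble correctly; the cases \ref{R4} and \ref{R4b} where a phase transition is present are handled by noting that cutting inside the $1$-rarefaction in \ref{R4b} again produces a state on the same first Lax curve.

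For \eqref{P2}, I would assume $\mathcal{R}[u_\ell, u_m](\bar x) = u_m$ and $\mathcal{R}[u_m, u_r](\bar x) = u_m$, i.e.\ $u_m$ appears as the rightmost value of the first solver at $\bar x$ and as the leftmost value of the second at $\bar x$; this means all waves of $\mathcal{R}[u_\ell,u_m]$ have speed $\le \bar x$ and all waves of $\mathcal{R}[u_m,u_r]$ have speed $\ge \bar x$. Concatenating, one gets a self-similar function with a jump structure, and I must check it is the admissible $\mathcal{R}[u_\ell, u_r]$. The only subtlety is when $u_\ell$ and $u_r$ lie in different phases while $u_m$ is in one of them: e.g.\ $u_\ell \in \Of^-$, $u_m \in \Oc^-$, $u_r \in \Oc$, where I need that the phase transition produced from $(u_\ell,u_m)$ together with the congested waves from $(u_m,u_r)$ is exactly the output of \ref{R4}/\ref{R4b} applied to $(u_\ell,u_r)$ — this follows because $\psi_2^-(u_r)$ depends only on $v(u_r)$ and the first Lax curve through $u_\ell$ is fixed, so the attachment point of the phase transition is the same. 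The main obstacle is precisely organizing this mixed-phase bookkeeping cleanly: one has to ensure that in every configuration the candidate solution one writes down actually satisfies Definition~\ref{def:Colombo} (correct Rankine--Hugoniot speed of the phase transition, correct side conditions $v=V_f$ and the two sub-Riemann problems), rather than doing any hard estimate. I would present it as a finite case check, invoking consistency of the unconstrained scalar and congested Lax solvers as known facts and reducing every mixed case to the location of the phase transition relative to $\bar x$, which by \eqref{H1} is always to the left of all congested $2$-waves and to the right of all congested $1$-waves.
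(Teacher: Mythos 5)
Your proposal is correct and follows essentially the same route as the paper: both arguments rest on the consistency of the underlying scalar and congested Lax solvers, the wave-speed ordering forced by \eqref{H1}, and a finite case check of the configurations involving a phase transition from $\Of^-$ to $\Oc^-$. The paper merely compresses the bookkeeping further by noting that $\mathcal{R}[u_\ell,u_m]$ cannot end with a contact discontinuity (no wave can follow one), which immediately reduces \eqref{P1} and \eqref{P2} to the cases \ref{R4}--\ref{R4b} with $w(u_m)=w_-$.
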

\begin{proof}
Since $\mathcal{R}[u,u] = u$ for all $u \in \Omega$, it is not restrictive to assume that $u_\ell$, $u_m$ and $u_r$ are all distinct.
Furthermore, since the usual Lax Riemann solver is consistent, it is sufficient to consider the cases for which at least one phase transition is involved. 
We observe that any admissible solution performs at most one phase transition and that all the phase transitions are from $\Of^-$ to $\Oc^-$.
Moreover, $\mathcal{R}[u_\ell,u_m]$ cannot contain any contact discontinuity, since no wave can follow a contact discontinuity.
Hence, to prove \eqref{P1} or \eqref{P2} we are left to consider the cases for which $(u_\ell,u_r)$ satisfies \ref{R4} or \ref{R4b}, $\rho_\ell \ne 0$, $w(u_m) = w_-$ and the result easily follows.
\end{proof}

\subsection{Proofs of the main properties of \texorpdfstring{$\mathcal{R}_F$}{}}\label{sec:tec1}

In the next lemmas we prove Proposition~\ref{prop:cc}. 

\begin{lemma}
The Riemann solver $\mathcal{R}_F$ is $\Lloc1$-continuous.
\end{lemma}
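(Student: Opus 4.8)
The plan is to show that whenever $(u_\ell^\varepsilon,u_r^\varepsilon)\to(u_\ell,u_r)$ in $\Omega^2$, we have $\mathcal{R}_F[u_\ell^\varepsilon,u_r^\varepsilon]\to\mathcal{R}_F[u_\ell,u_r]$ in $\Lloc1$. Since $\mathcal{R}$ is already $\Lloc1$-continuous by Lemma~\ref{lem:01}, the only delicate issue is the behaviour of $\mathcal{R}_F$ near the boundary $\partial\mathcal{D}_1$ between the unconstrained regime $\mathcal{D}_1$ and the constrained regime $\mathcal{D}_2$. The first step is to observe that if $(u_\ell,u_r)$ lies in the interior of $\mathcal{D}_1$ or in the interior of $\mathcal{D}_2$, continuity follows directly: in the interior of $\mathcal{D}_1$ one invokes $\Lloc1$-continuity of $\mathcal{R}$, and in the interior of $\mathcal{D}_2$ one invokes the continuity of the maps $u_\ell\mapsto\hat{u}(u_\ell,F)$ and $u_r\mapsto\check{u}(u_r,F)$ — which follows from the explicit selection criterion in Definition~\ref{def:01}, together with monotonicity of $f$ along the relevant Lax curves and the strict inequality $f>F$ characterizing the open set $\mathcal{D}_2$ — and then applies $\Lloc1$-continuity of $\mathcal{R}$ to each of the two Riemann problems $[u_\ell,\hat{u}]$ and $[\check{u},u_r]$.

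The main obstacle is the transitional case $(u_\ell,u_r)\in\partial\mathcal{D}_1$, where $f(\mathcal{R}[u_\ell,u_r](\cdot,0^\pm))=F$. Here the approaching sequence may alternate between $\mathcal{D}_1$ and $\mathcal{D}_2$. The key point to establish is that the constrained solution degenerates continuously to the unconstrained one: when $(u_\ell,u_r)\in\partial\mathcal{D}_1$, the states $\hat{u}(u_\ell,F)$ and $\check{u}(u_r,F)$ coincide with the trace values $\mathcal{R}[u_\ell,u_r](0^-)$ and $\mathcal{R}[u_\ell,u_r](0^+)$, respectively, so that the "glued" solution $\mathcal{R}[u_\ell,\hat{u}]$ for $x<0$ juxtaposed with $\mathcal{R}[\check{u},u_r]$ for $x>0$ reduces exactly to $\mathcal{R}[u_\ell,u_r]$ (the undercompressive shock $\mathsf{U}$ between $\hat{u}$ and $\check{u}$ becoming trivial). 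I would verify this equality case-by-case along the four sub-cases of $\mathcal{D}_1$ listed before Definition~\ref{def:01} (namely $(u_\ell,u_r)\in\Of^2$, $\Oc^2$, $\Oc^-\times\Of^-$, $\Of^-\times\Oc^-$), checking in each that the selection conditions $f(\hat u)=f(\check u)=F$, $w(\hat u)=\max\{w(u_\ell),w_-\}$ and the prescription for $v(\check u)$ force $\hat u$ and $\check u$ onto the traces of $\mathcal{R}[u_\ell,u_r]$.

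Once this boundary identification is in hand, the conclusion follows by a standard argument: given any sequence $(u_\ell^\varepsilon,u_r^\varepsilon)\to(u_\ell,u_r)\in\partial\mathcal{D}_1$, split it into the subsequence in $\mathcal{D}_1$ (handled by continuity of $\mathcal{R}$) and the subsequence in $\mathcal{D}_2$ (handled by continuity of $\hat u$, $\check u$ and of $\mathcal{R}$, using that $\hat{u}(u_\ell^\varepsilon,F)\to\hat{u}(u_\ell,F)=$ trace of $\mathcal{R}[u_\ell,u_r]$ and similarly for $\check u$); both subsequences converge to the same limit $\mathcal{R}_F[u_\ell,u_r]=\mathcal{R}[u_\ell,u_r]$, whence the whole sequence does. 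The only arithmetic one must not skip is checking, for each of the four sub-cases, that the trace of $\mathcal{R}[u_\ell^\varepsilon,u_r^\varepsilon]$ depends continuously on the data up to $\partial\mathcal{D}_1$, which again reduces to continuity and monotonicity of $f$ along the first and second Lax curves; I expect this to be routine but somewhat lengthy, so it is where the bulk of the written proof will go.
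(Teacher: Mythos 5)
Your architecture (interior of $\mathcal{D}_1$ via continuity of $\mathcal{R}$, interior of $\mathcal{D}_2$ via continuity of $u_\ell\mapsto\hat u$ and $u_r\mapsto\check u$, and a subsequence-splitting argument on the boundary) matches the paper's, and the first two parts are fine. The problem is the ``key point'' you rely on for the boundary case: it is \emph{not} true that for $(u_\ell,u_r)$ with $f(\mathcal{R}[u_\ell,u_r](0^\pm))=F$ the states $\hat u(u_\ell,F)$ and $\check u(u_r,F)$ coincide with the traces $\mathcal{R}[u_\ell,u_r](0^\mp)$. Take $u_\ell,u_r\in\Of$ with $u_\ell\in\Of^-$ and $f(u_\ell)=F$: then $\mathcal{R}[u_\ell,u_r]$ is a contact discontinuity of speed $V>0$, so both traces at $x=0$ equal $u_\ell\in\Of^-$, whereas by Definition~\ref{def:01} $\hat u\in\Oc$ with $w(\hat u)=w_-$, hence $\hat u\ne u_\ell$. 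The same failure occurs for $u_\ell\in\Of^-$, $u_r\in\Oc^-$ with $f(\psi_2^-(u_r))\ge f(u_\ell)=F$, and, on the other side of the constraint, for $u_\ell,u_r\in\Oc$ with $f(u_*(u_\ell,u_r))=F$ whenever $\check u$ lands in $\Of^-$ while the trace $u_*(u_\ell,u_r)$ lies in $\Oc$. So the case-by-case verification you propose would not close in these cases, and the splitting argument built on the identity $\hat u(u_\ell,F)=\mathcal{R}[u_\ell,u_r](0^-)$ breaks down exactly where you locate the main difficulty.

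The mechanism that actually makes the boundary case work is different, and your proof never invokes it: in the cases above the spurious intermediate state $\hat u^\varepsilon$ does not converge to the trace, but it becomes invisible in $\Lloc1$ because the wave connecting $u_\ell^\varepsilon$ to $\hat u^\varepsilon$ has Rankine--Hugoniot speed
\[
\Lambda(u_\ell^\varepsilon,\hat u^\varepsilon)=\frac{F-f(u_\ell^\varepsilon)}{\hat\rho^{\,\varepsilon}-\rho_\ell^\varepsilon}\longrightarrow 0^-,
\]
the denominator being bounded away from zero since $\hat u^\varepsilon\in\Oc$ while $u_\ell\in\Of^-$; hence the sector $\{\Lambda^\varepsilon t<x<0\}$ on which $\mathcal{R}[u_\ell^\varepsilon,\hat u^\varepsilon]$ takes the value $\hat u^\varepsilon$ has vanishing measure and the restriction to $\R_-$ still converges to $u_\ell$. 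A symmetric vanishing-speed argument is needed on $\R_+$ (for $\Lambda(\check u^\varepsilon,\psi_2^-(u_r^\varepsilon))\to 0^+$) when $\check u^\varepsilon\in\Of^-$. Until the false identity is replaced by this argument, the boundary case is not proved; with it, the rest of your plan goes through and coincides with the paper's proof.
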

\begin{proof}
By the $\Lloc1$-continuity of $\mathcal{R}$, it suffices to consider $(u_\ell^\varepsilon,u_r^\varepsilon) \to (u_\ell,u_r)$ with $(u_\ell^\varepsilon,u_r^\varepsilon) \in \mathcal{D}_2$ and to prove that $\mathcal{R}[u_\ell^\varepsilon,\hat{u}^\varepsilon] \to \mathcal{R}[u_\ell,u_r]$ in $\R_-$ and $\mathcal{R}[\check{u}^\varepsilon,u_r^\varepsilon] \to \mathcal{R}[u_\ell,u_r]$ in $\R_+$, where $\hat{u}^\varepsilon \doteq \hat{u}(u_\ell^\varepsilon,F)$ and $\check{u}^\varepsilon\doteq\check{u}(u_r^\varepsilon,F)$.
For notational simplicity, below we denote  $\hat{u} \doteq \hat{u}(u_\ell,F)$, $\check{u} \doteq \check{u}(u_r,F)$ and $u_* \doteq u_*(u_\ell,u_r)$.\\
We first consider the cases with $(u_\ell,u_r) \in \mathcal{D}_1$.

\begin{itemize}[itemindent=*,leftmargin=0pt]\setlength{\itemsep}{0cm}%

\item 
Assume $u_\ell, u_r\in \Of$ and $f(u_\ell) = F$.
Then, either $u_\ell \in \Of^-$ or $u_\ell \in \Of^+$.
In the first case, $\Lambda(u_\ell^\varepsilon, \hat{u}^\varepsilon) \to 0$ and $\mathcal{R}[u_\ell^\varepsilon,  \hat{u}^\varepsilon] \to u_\ell$ in $\R_-$, while in the latter $\hat{u}^\varepsilon \to u_\ell$ and $\mathcal{R}[u_\ell^\varepsilon,  \hat{u}^\varepsilon] \to \mathcal{R}[u_\ell,u_\ell] = u_\ell$.
Moreover, in both cases $ \check{u}^\varepsilon \to u_\ell$ and $\mathcal{R}[ \check{u}^\varepsilon, u_r^\varepsilon] \to \mathcal{R}[u_\ell,u_r]$.

\item 
Assume $u_\ell,u_r\in\Oc$ and $f(u_*) = F$.
Then, $\hat{u}^\varepsilon \to u_*$ and $\mathcal{R}[u_\ell^\varepsilon, \hat{u}^\varepsilon] \to \mathcal{R}[u_\ell,u_*]$.
Moreover, it is sufficient to consider the cases $\check{u}^\varepsilon \in \Of^-$ and $\check{u}^\varepsilon \in \Oc$.
In the first case $\psi_2^-(u_r^\varepsilon) \to u_*$, $\Lambda(\check{u}^\varepsilon, \psi_2^-(u_r^\varepsilon)) \to 0$ and $\mathcal{R}[\check{u}^\varepsilon, u_r^\varepsilon] \to \mathcal{R}[u_*,u_r]$ in $\R_+$, while in the latter $\check{u}^\varepsilon \to u_*$ and $\mathcal{R}[\check{u}^\varepsilon, u_r^\varepsilon] \to \mathcal{R}[u_*,u_r]$.

\item 
Assume $u_\ell\in \Oc^-$, $u_r\in \Of^-$ and $f(\psi_1(u_\ell)) = F$.
Then, $\hat{u}^\varepsilon \to \psi_1(u_\ell)$ and $\check{u}^\varepsilon = \psi_1(u_\ell)$.
As a consequence, $\mathcal{R}[u_\ell^\varepsilon, \hat{u}^\varepsilon] \to \mathcal{R}[u_\ell,\psi_1(u_\ell)]$ and $\mathcal{R}[\check{u}^\varepsilon, u_r^\varepsilon] \to \mathcal{R}[\psi_1(u_\ell),u_r]$.

\item 
Assume $u_\ell\in \Of^-$, $u_r\in \Oc^-$ and $f(u_\ell) > f(\psi_2^-(u_r)) = F$.
Then, $\hat{u}^\varepsilon = \psi_2^-(u_r)$ and therefore $\mathcal{R}[u_\ell^\varepsilon, \hat{u}^\varepsilon] \to \mathcal{R}[u_\ell,\psi_2^-(u_r)]$.
Moreover, $\Lambda(\check{u}^\varepsilon, \psi_2^-(u_r^\varepsilon)) \to 0$ and therefore $\mathcal{R}[\check{u}^\varepsilon, u_r^\varepsilon] \to \mathcal{R}[\psi_2^-(u_r),u_r]$ in $\R_+$.

\item 
Assume $u_\ell\in \Of^-$, $u_r\in \Oc^-$ and $f(\psi_2^-(u_r)) \ge f(u_\ell) = F$.
In this case, $\Lambda(u_\ell^\varepsilon, \hat{u}^\varepsilon) \to 0$ and $\mathcal{R}[u_\ell^\varepsilon, \hat{u}^\varepsilon] \to u_\ell$ in $\R_-$.
Moreover, $\check{u}^\varepsilon = u_\ell$ and $\mathcal{R}[\check{u}^\varepsilon, u_r^\varepsilon] \to \mathcal{R}[u_\ell,u_r]$.

\end{itemize}
We finally consider the cases with $(u_\ell,u_r) \in \mathcal{D}_2$.

\begin{itemize}[itemindent=*,leftmargin=0pt]\setlength{\itemsep}{0cm}%

\item 
Assume $u_\ell, u_r\in \Of$ and $f(u_\ell) > F$.
Then, $\hat{u}^\varepsilon \to \hat{u}$ and $\check{u}^\varepsilon \to \check{u}$.
As a consequence, $\mathcal{R}[u_\ell^\varepsilon, \hat{u}^\varepsilon] \to \mathcal{R}[u_\ell,\hat{u}]$ and $\mathcal{R}[\check{u}^\varepsilon, u_r^\varepsilon] \to \mathcal{R}[\check{u},u_r]$.

\item 
Assume $u_\ell,u_r\in\Oc$ and $f(u_*(u_\ell,u_r)) > F$.
Then, $\hat{u}^\varepsilon \to \hat{u}$ and therefore $\mathcal{R}[u_\ell^\varepsilon, \hat{u}^\varepsilon] \to \mathcal{R}[u_\ell,\hat{u}]$.
Moreover, it is sufficient to consider the cases $\check{u}^\varepsilon \in \Oc$ and $\check{u}^\varepsilon \in \Of^-$.
In the first case $\check{u}^\varepsilon \to \check{u}$ and $\mathcal{R}[\check{u}^\varepsilon, u_r^\varepsilon] \to \mathcal{R}[\check{u},u_r]$, while in the latter (whether $\check{u} = \psi_2^-(u_r)$ or $\check{u} = \check{u}^\varepsilon$) we have $\psi_2^-(u_r^\varepsilon) \to \psi_2^-(u_r)$ and $\mathcal{R}[\check{u}^\varepsilon, u_r^\varepsilon] \to \mathcal{R}[\check{u},u_r]$ in $\R_+$.

\item 
Assume $u_\ell\in \Oc^-$, $u_r\in \Of^-$ and $f(\psi_1(u_\ell)) > F$.
Then, $\hat{u}^\varepsilon \to \hat{u}$ and $\check{u}^\varepsilon = \check{u}$.
As a consequence, we have $\mathcal{R}[u_\ell^\varepsilon, \hat{u}^\varepsilon] \to \mathcal{R}[u_\ell,\hat{u}]$ and $\mathcal{R}[\check{u}^\varepsilon, u_r^\varepsilon] \to \mathcal{R}[\check{u},u_r]$.

\item 
Assume $u_\ell\in \Of^-$, $u_r\in \Oc^-$ and $\min\{f(u_\ell),f(\psi_2^-(u_r))\} > F$.
Then, $\hat{u}^\varepsilon = \hat{u}$ and $\check{u}^\varepsilon = \check{u}$.
As a consequence, $\mathcal{R}[u_\ell^\varepsilon, \hat{u}^\varepsilon] \to \mathcal{R}[u_\ell,\hat{u}]$ and $\mathcal{R}[\check{u}^\varepsilon, u_r^\varepsilon] \to \mathcal{R}[\check{u},u_r]$.\qedhere
\end{itemize}
\end{proof} 

\begin{lemma}\label{lem:05}
The Riemann solver $\mathcal{R}_F$ satisfies \eqref{P2} but not \eqref{P1}.
\end{lemma}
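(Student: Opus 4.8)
\textbf{Proof strategy for Lemma~\ref{lem:05}.}
The plan is to verify the two implications of Definition~\ref{def:cons} separately, treating \eqref{P2} by a direct case analysis and exhibiting a single counterexample to disprove \eqref{P1}. For \eqref{P2}, I would start from the hypothesis that $\mathcal{R}_F[u_\ell,u_m](\bar x) = u_m$ and $\mathcal{R}_F[u_m,u_r](\bar x) = u_m$ for some $\bar x \in \R$. The key observation is that, because of the structure of $\mathcal{R}_F$ in Definition~\ref{def:01}, the constrained solver always consists of (at most) a left-going part, a stationary non-classical jump at $x=0$ of flux value $F$, and a right-going part; moreover the intermediate state $u_m$ can be reached only at a value $\bar x$ that is either $\le 0$ or $\ge 0$. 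If $\bar x < 0$, then $u_m$ lies on the left-going portion of both $\mathcal{R}_F[u_\ell,u_m]$ and $\mathcal{R}_F[u_m,u_r]$; the second condition then forces $\mathcal{R}_F[u_m,u_r]$ to have no wave to the left of $\bar x$, which pins down $\check u(u_r,F)$ and $\hat u$ in a way compatible with the splitting; the case $\bar x > 0$ is symmetric, using that $u_m$ sits on the right-going portion; the case $\bar x = 0$ forces $u_m \in \{\hat u, \check u\}$. In each sub-case one checks, using the consistency of the unconstrained $\mathcal{R}$ (Lemma~\ref{lem:02}) on the two half-lines and the fact that the selection of $\hat u,\check u$ depends only on $u_\ell$ (resp.\ $u_r$) and $F$, that $\mathcal{R}_F[u_\ell,u_r]$ restricted to $x<\bar x$ equals $\mathcal{R}_F[u_\ell,u_m]$ and restricted to $x\ge \bar x$ equals $\mathcal{R}_F[u_m,u_r]$. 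I expect this to be routine but lengthy, with the main care needed in the sub-cases where the constraint is active on one side but not the other (so that $\hat u,\check u$ genuinely enter).

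For the failure of \eqref{P1}, I would produce an explicit triple. Take $(u_\ell,u_r)\in\mathcal{D}_2$ with $u_\ell,u_r\in\Of$ and $f(u_\ell)>F$, so that $\mathcal{R}_F[u_\ell,u_r]$ consists of a shock/contact from $u_\ell$ to $\hat u$, the stationary undercompressive jump from $\hat u$ to $\check u$, and a contact from $\check u$ to $u_r$ with speed $V$. Pick $\bar x>0$ in the region where the solution equals $\check u$, so that $u_m=\check u$. Then \eqref{P1} would require $\mathcal{R}_F[u_\ell,u_m](x)=\mathcal{R}_F[u_\ell,\check u](x)$ to coincide with $\mathcal{R}_F[u_\ell,u_r](x)$ for $x<\bar x$. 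But $\check u\in\Oc$, hence $(u_\ell,\check u)\in\Of^-\times\Oc^-$ (or $\Of^+\times\Oc$), and since $f(\check u)=F\le F$ we have $(u_\ell,\check u)\in\mathcal{D}_1$, so $\mathcal{R}_F[u_\ell,\check u]=\mathcal{R}[u_\ell,\check u]$ is an \emph{admissible} (classical) solution — in particular it contains no stationary undercompressive jump of flux $F$, whereas $\mathcal{R}_F[u_\ell,u_r]$ does. Thus the two disagree for $x<\bar x$, contradicting \eqref{P1}. (One must check the chosen numerical values make $f(u_\ell)>F$ and keep everything inside $\Of$ and $\Oc$; this is immediate for, e.g., $u_\ell$ near $\sigma_+^f$ and $F$ slightly below $V\sigma_+^f$.)

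The main obstacle is the bookkeeping in the proof of \eqref{P2}: one has to confirm that the two-sided splitting hypothesis is actually compatible with the rigid way $\mathcal{R}_F$ selects $\hat u$ and $\check u$ — in other words, that there is no configuration where $u_m$ can be an intermediate state of \emph{both} $\mathcal{R}_F[u_\ell,u_m]$ and $\mathcal{R}_F[u_m,u_r]$ at a common $\bar x$ yet $\mathcal{R}_F[u_\ell,u_r]$ routes through a different pair $(\hat u,\check u)$. The resolution is that the selection of $\hat u$ depends only on $(w(u_\ell),F)$ and that of $\check u$ only on $(u_r,F)$ (via $v(\check u)$ and the dichotomy $f(\psi_2^-(u_r))\gtrless F$); since the hypotheses of \eqref{P2} force $u_m$ to carry exactly the marker/velocity data that $u_\ell$ passes to the right and $u_r$ receives from the left, these selections are unchanged, and the splitting goes through. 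I would organize the write-up as a short list of cases indexed by the position of $\bar x$ and by whether $(u_\ell,u_m)$, $(u_m,u_r)$ lie in $\mathcal{D}_1$ or $\mathcal{D}_2$, invoking Lemma~\ref{lem:02} and the explicit formulas of Definition~\ref{def:01} in each.
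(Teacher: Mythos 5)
Your treatment of \eqref{P2} is a workable sketch of essentially the same case analysis the paper performs (the paper organizes the cases by the phases of $u_\ell$ and $u_m$, using the observation that $\mathcal{R}_F[u_\ell,u_m]$ cannot contain a contact discontinuity; your organization by the sign of $\bar x$, together with the remark that $\hat{u}$ depends only on $(w(u_\ell),F)$ and $\check{u}$ only on $(u_r,F)$, would lead to the same verifications). The genuine problem is your counterexample to \eqref{P1}: it is not a counterexample. With $u_\ell,u_r\in \Of$ and $f(u_\ell)>F$, the selection criterion gives $v(\check{u})=V$ and $f(\check{u})=F$, so $\check{u}=\bigl(F/V,Q(F/V)\bigr)\in\Of$; it can meet $\Oc$ only through $\Of^+$ and is never in $\Oc^-$, so the cases \ref{R4}, \ref{R4b} you invoke do not apply and $(u_\ell,\check{u})$ is treated as a pair in $\Of^2$. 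Membership of such a pair in $\mathcal{D}_1$ is decided by the trace of $\mathcal{R}[u_\ell,\check{u}]$ at $x=0$, which is $u_\ell$ (the contact discontinuity travels with speed $V>0$), not by the value $f(\check{u})$. Since $f(u_\ell)>F$, one has $(u_\ell,\check{u})\in\mathcal{D}_2$, so $\mathcal{R}_F[u_\ell,\check{u}]$ \emph{does} contain the stationary undercompressive jump and in fact coincides with $\mathcal{R}_F[u_\ell,u_r]$ on $x<\bar x$: both equal $\mathcal{R}[u_\ell,\hat{u}(u_\ell,F)]$ on $\R_-$ and equal $\check{u}$ on $(0,\bar x)$, because $\hat{u}(u_\ell,F)$ is unchanged and $\check{u}(\check{u},F)=\check{u}$. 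Likewise $(\check{u},u_r)\in\mathcal{D}_1$ and $\mathcal{R}_F[\check{u},u_r]=\mathcal{R}[\check{u},u_r]$ equals $\check{u}$ on $x<\bar x<V$. Hence both implications of \eqref{P1} hold for your triple.

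The failure of \eqref{P1} must be produced by the opposite mechanism: start from a pair in $\mathcal{D}_1$ whose solution crosses $x=0$ with flux strictly below $F$, and choose the intermediate state so that one of the split pairs lands in $\mathcal{D}_2$. This is what the paper does: take $u_\ell\in\Of^-$ and $u_r\in\Oc^-$ with $f(u_\ell)<F<f(\psi_2^-(u_r))$, so that $(u_\ell,u_r)\in\mathcal{D}_1$ and $\mathcal{R}_F[u_\ell,u_r]=\mathcal{R}[u_\ell,u_r]$ attains the intermediate state $u_m\doteq\psi_2^-(u_r)$ at some $\bar x>0$. Then $u_*(u_m,u_r)=\psi_2^-(u_r)$ has flux exceeding $F$, so $(u_m,u_r)\in\mathcal{D}_2$ and $\mathcal{R}_F[u_m,u_r]$ develops a nontrivial $1$-wave on $\R_-$ bringing the flux down to $F$; in particular it is not constantly equal to $u_m$ for $x<\bar x$, which violates the second line of \eqref{P1}. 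I recommend replacing your example with this one and keeping your \eqref{P2} analysis, fleshed out case by case.
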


\begin{proof}
We start by proving \eqref{P2} and assume
\begin{align}\label{eq:Ziltoid01}\tag{Z1}
&\mathcal{R}_F[u_\ell,u_m](\bar x)=u_m=\mathcal{R}_F[u_m,u_r](\bar x), &\text{ for }\bar x\in \R.
\end{align}
Since by Lemma~\ref{lem:02} the Riemann solver $\mathcal{R}$ satisfies \eqref{P2}, it is not restrictive to assume that
\begin{equation}\label{eq:Ziltoid02}\tag{Z2}
\{(u_\ell,u_r), (u_\ell,u_m), (u_m,u_r)\} \cap \mathcal{D}_2 \ne \emptyset.
\end{equation}
We also observe that \eqref{eq:Ziltoid01} implies
\begin{align}\label{eq:Ziltoid03}\tag{Z3}
\mathcal{R}_F[u_\ell,u_m] \text{ does not contain any contact discontinuity},
\end{align}
because otherwise it would be not possible to juxtapose $\mathcal{R}_F[u_\ell,u_m]$ and $\mathcal{R}_F[u_m,u_r]$.
We are then left to consider the following cases.

\begin{itemize}[itemindent=*,leftmargin=0pt]\setlength{\itemsep}{0cm}%

\item
Assume $u_\ell, u_m \in \Of$.
In this case, by \eqref{eq:Ziltoid03} we have $(u_\ell,u_m) \in \mathcal{D}_2$ and $u_m = \check{u}(u_m,F)$, namely $f(u_\ell) > F = f(u_m)$.
Then, by \eqref{eq:Ziltoid01} we have that either $u_m \in \Of^-$ and $f(\psi_2^-(u_r)) > F$ or $u_m \in \Of^+$ and $u_r \in \Of$.
In both cases it is easy to conclude.

\item
Assume $u_\ell, u_m \in \Oc$.
In this case, by \eqref{eq:Ziltoid03} we have either $(u_\ell,u_m) \in \mathcal{D}_2$ or $(u_\ell,u_m) \in \mathcal{D}_1$ and $w(u_\ell) = w(u_m)$.
In the first case, whether $\check{u}(u_m,F) \in \Oc$, $u_m = \check{u}(u_m,F)$ and $w(u_m) < w(u_\ell)$ or $\check{u}(u_m,F) \in \Of^-$, $w(u_m) = w_- \le w(u_\ell)$ and $f(u_m) > F$, by \eqref{eq:Ziltoid01} we have that $v(u_r) = v(u_m)$.
In the latter case, by \eqref{eq:Ziltoid01} and \eqref{eq:Ziltoid02} we have that $f(u_m) = F$, $v(u_r) > v(u_m)$ and $(u_m,u_r) \in \mathcal{D}_2$.
In both cases it is easy to conclude.

\item Assume $u_\ell\in \Oc^-$ and $u_m \in \Of^-$. 
In this case, by \eqref{eq:Ziltoid03} we have $(u_\ell,u_m)\in\mathcal{D}_2$ and $u_m=\check{u}(u_m,F)$.
Then, by \eqref{eq:Ziltoid01} we have that $u_r$ has to satisfy $f(\psi_2^-(u_r))>F$.
Hence, it is easy to conclude.

\item Assume $u_\ell\in \Of^-$ and $u_m \in \Oc^-$.
In this case, by \eqref{eq:Ziltoid03} we have $w(u_m) = w_-$.
Moreover, by \eqref{eq:Ziltoid02} and \eqref{eq:Ziltoid03} we have either $f(u_m) = F < f(u_\ell)$  and $v(u_r) > v(u_m)$ or $f(u_m) > F$ and $v(u_r) = v(u_m)$.
In both cases it is easy to conclude. 

\end{itemize}
Finally, it remains to show that $\mathcal{R}_F$ does not satisfy \eqref{P1}. For example, take $u_\ell\in \Of^-$ and $u_r\in \Oc^-$ such that $f(u_\ell) < F < f(\psi_2^-(u_r))$.
Moreover, fix $\bar{x}>0$ so that $\mathcal{R}_F[u_\ell,u_r](\bar x) = \psi_2^-(u_r)$ and take $u_m \doteq \psi_2^-(u_r)$.
Observe that $(u_m, u_r)\in \mathcal{D}_2$, $(u_\ell, u_r),\,(u_\ell,u_m)\in\mathcal{D}_1$ and \eqref{P1} does not hold true.
\end{proof}

Finally, we accomplish the proof of Proposition ~\ref{prop:IDR} on the minimal invariant domains for $\mathcal{R}_F$.
We remark that $(F/V,Q(F/V))$ is the point of intersection between the lines $\{u\in\Omega \,:\, f(u)=F\}$ and $\Of$: if $F \ge V\sigma_-$, this point belongs to the region $\Oc$, otherwise it is in $\Of^-$. 

\begin{proof}[Proof of Proposition~\ref{prop:IDR}]
%
Let us first prove \ref{I11}.
The invariance of $\mathcal{I}_f$ is an easy consequence of Definition~\ref{def:01}, hence we are left to prove the minimality of $\mathcal{I}_f$.
Let $\mathcal{I}$ be an invariant domain for $\mathcal{R}_F$ containing $\Of$.
Then, $\mathcal{I}$ has to contain
\[
\mathcal{R}_F[ \{ (u_\ell,u_r) \in \Of^2 \,:\, f(u_\ell) > F\} ](\R)
= \Of \cup \{u\in\Oc \,:\, f(u)= F\} \cup \mathcal{I}_2.
\]
As a consequence, $\mathcal{I}$ has to contain also
\begin{align*}
&\mathcal{R}_F[ \{ (u_\ell,u_r) \in \Oc^2 \,:\, f(u_\ell) = f(u_r) = F,\ v(u_\ell) > v(u_r)\} ](\R)
=
\begin{cases}
\mathcal{I}_1 &\text{if }F \le V\,\sigma_-,
\\
\{u \in \mathcal{I}_1 \,:\, f(\psi_1(u)) \ge F\} &\text{if }F > V\,\sigma_-.
\end{cases}
\end{align*}
Finally, if $F > V\,\sigma_-$, then $\mathcal{I}$ has to contain also
\[
\mathcal{R}_F[ \{ (u_\ell,u_r) \in \Of^+ \times \Oc \,:\, f(u_\ell) \le F = f(u_r)\} ](\R)
=
\{u \in \mathcal{I}_1 \,:\, f(\psi_1(u)) \le F\}.
\]
In conclusion we proved that $\mathcal{I} \supseteq \mathcal{I}_f$.
%
%
%

Now, to prove \ref{I12} it is sufficient to observe that by Definition~\ref{def:01} we have that there exist $u_\ell,u_r\in \Oc$ such that the values attained by $\mathcal{R}_F[u_\ell,u_r]$ exit $\Oc$ if and only if $F<V\sigma_-$, and in this case
\[
\mathcal{R}_F[\Oc^2](\R)
\setminus \Oc
=
\mathcal{R}_F[\{ (u_\ell,u_r) \in \Oc^2 \,:\, f(\psi_2^-(u_r)) > F\} ](\R)
\setminus \Oc
=\{(F/V,Q(F/V))\}
\subset \Of^-.\qedhere
\]
\end{proof}

\subsection{Proofs of the main properties of \texorpdfstring{$\mathcal{S}$}{}}\label{sec:tec2}

The following lemmas contain the proof of Proposition~\ref{prop:cc2}. We recall that the Riemann solver for the PT$^p$ model has already been studied in \cite{BenyahiaRosini01}.

\begin{lemma}
The Riemann solver $\mathcal{S}$ is $\Lloc1$-continuous.
\end{lemma}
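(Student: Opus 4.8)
The plan is to argue as in the proof of Lemma~\ref{lem:01}, using Remark~\ref{rem:specialRS} to localize the analysis. I would take a sequence $(u_\ell^\varepsilon,u_r^\varepsilon)\to(u_\ell,u_r)$ in $\Omega$ and show that $\mathcal{S}[u_\ell^\varepsilon,u_r^\varepsilon]\to\mathcal{S}[u_\ell,u_r]$ in $\Lloc1$. Since by Remark~\ref{rem:specialRS} the solver $\mathcal{S}$ coincides with $\mathcal{R}$ outside the three families of data described by \eqref{eq:specialRS1}, \eqref{eq:specialRS2} and \eqref{eq:specialRS3}, and $\mathcal{R}$ is $\Lloc1$-continuous by Lemma~\ref{lem:01}, up to passing to a subsequence it is enough to treat the case in which all the $(u_\ell^\varepsilon,u_r^\varepsilon)$ lie in a fixed one of those three regions, the limit $(u_\ell,u_r)$ possibly lying only on its boundary. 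In each of these regions $\mathcal{S}[u_\ell^\varepsilon,u_r^\varepsilon]$ is the juxtaposition of at most two solutions of the form $\mathcal{R}[\,\cdot\,,\,\cdot\,]$ — together with one intermediate phase transition in the case \ref{S2} — glued along rays whose endpoints and speeds are built from $\psi_1^f$, $\psi_1^c$, $u_-^c$, $\psi_2^-$, $u_*$, $\lambda_1$ and $\Lambda$.

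The first step is to record that all these auxiliary maps are continuous on their domains, so that $\psi_1^{f,c}(u_\ell^\varepsilon)\to\psi_1^{f,c}(u_\ell)$, and similarly for $\psi_2^-$, $u_*$, $\lambda_1$ and $\Lambda$ (at each of the relevant pairs the two states have different velocities and thus, lying on a first Lax curve, distinct $\rho$-coordinates, so the Rankine--Hugoniot speed is well defined and continuous there). Combined with the $\Lloc1$-continuity of $\mathcal{R}$, this settles every configuration in which the strict inequality defining \ref{S2}, \ref{S4} or \ref{S3} — namely $L''_{w(u_\ell)}(\rho_\ell)<0$, $L''_{w(u_\ell)}(\rho_\ell)>0$, or $\Lambda(u_\ell,u_-^c)<\lambda_1(u_-^c)$, respectively — still holds in the limit, as well as the symmetric configurations in which the limit already lies in the region where $\mathcal{S}=\mathcal{R}$ while infinitely many $(u_\ell^\varepsilon,u_r^\varepsilon)$ obey the special formula.

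What remains are the boundary configurations, where the defining inequality degenerates to an equality in the limit; here one must check that the extra structure of the special formula disappears continuously, so that it matches $\mathcal{R}[u_\ell,u_r]$. In the case \ref{S2}, the equality $L''_{w(u_\ell)}(\rho_\ell)=0$ forces, by \eqref{H2}, the PT$^0$ model with $w(u_\ell)=0$, hence an affine first Lax curve through $u_\ell$; then $u_\ell$, $\psi_1^c(u_\ell)$ and $\psi_1^f(u_\ell)$ are collinear in the $(\rho,f)$-plane, all the relevant chord speeds coincide with the slope of that segment, and the juxtaposition prescribed by \ref{S2} collapses exactly onto the $1$-wave from $u_\ell$ to $\psi_1^f(u_\ell)$ followed by the contact discontinuity of speed $V_f$ that constitutes $\mathcal{R}[u_\ell,u_r]$. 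In the cases \ref{S3} and \ref{S4}, the equality — $\Lambda(u_\ell,u_-^c)=\lambda_1(u_-^c)$ for \ref{S3}, and again $w(u_\ell)=0$ with an affine first Lax curve for \ref{S4} — makes the intermediate front carried by the special formula have vanishing strength or merge with the adjacent wave, so that $\mathcal{R}[u_-^c,u_r]$, respectively $\mathcal{R}[\psi_1^c(u_\ell),u_r]$, prolongs the solution exactly as $\mathcal{R}[u_\ell,u_r]$ does in the first item of the definition of $\mathcal{S}$. Finally, I would check the passage between the regimes $u_\ell\in\Of^-$ and $u_\ell\in\Of^+$, i.e. $\rho_\ell=\sigma_-^f$ and $w(u_\ell)=w_-$, where \ref{S3} turns into \ref{S4}: there $u_-^c=\psi_1^c(u_\ell)$ and the two prescriptions agree. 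Combining these boundary cases with the $\Lloc1$-continuity of $\mathcal{R}$ and the continuity of the auxiliary maps gives the claim. The main obstacle is precisely this last point, namely verifying that the special formulas degenerate continuously to $\mathcal{R}$ along the boundaries between the regimes.
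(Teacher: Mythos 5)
Your proof is correct, but it is organized around a different set of delicate configurations than the paper's own argument. The paper reduces, via the juxtaposition structure and Remark~\ref{rem:specialRS}, to the three configurations in which $\mathcal{S}[u_\ell,u_r]$ degenerates to a single phase transition while $(u_\ell,u_r)$ still satisfies \eqref{eq:specialRS1}, \eqref{eq:specialRS2} or \eqref{eq:specialRS3} \emph{strictly} (namely $u_r=\psi_1^f(u_\ell)$ with $v(u_\ell)=V_c$; $u_\ell=\psi_1^f(u_r)$ with $v(u_r)=V_c$; $u_r=u_-^c$), and verifies there that all the intermediate wave speeds collapse onto the speed of the limiting front. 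In your scheme these three configurations fall inside your first case, where the strict inequality persists along the sequence (the phases being at positive distance when $V_c<V_f$, the sequence cannot switch phase), and they are dispatched wholesale by the observation that each of \ref{S2}, \ref{S3}, \ref{S4} is a fixed composition of $\Lloc1$-continuous pieces glued along a continuously varying ray; since $\Omega$ is bounded, this gluing is indeed $\Lloc1$-continuous, so the argument is valid, if more abstract than the paper's explicit wave-speed checks. What you scrutinize instead are the boundaries between the structure regions --- $L''_{w(u_\ell)}(\rho_\ell)=0$, the tangency $\Lambda(u_\ell,u_-^c)=\lambda_1(u_-^c)$, and $\rho_\ell=\sigma_-^f$ --- which the paper's reduction passes over in silence; your degeneration arguments there (the affine $w=0$ Lax curve of the PT$^0$ model forced by \eqref{H2}, so that the two fronts of \ref{S2} or \ref{S4} travel with the same speed and the intermediate state $\psi_1^c(u_\ell)$ occupies a vanishing fan; the tangency point $u_p$ of \ref{R4b} coinciding with $u_-^c$ so that \ref{S3} reproduces $\mathcal{R}[u_\ell,u_r]$; and $\psi_1^c(u_\ell)=u_-^c$ at $\rho_\ell=\sigma_-^f$) are correct and do need to be checked, even though by \eqref{H2} the first one only arises for the PT$^0$ model. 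In short, the two proofs examine essentially disjoint families of borderline data: yours buys a systematic treatment of the cross-regime limits, while the paper's makes the collapse of the intermediate states explicit at the level of the speeds $\lambda_1$ and $\Lambda$.
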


\begin{proof}
Given the analysis of Lemma~\ref{lem:01} and Remark~\ref{rem:specialRS}, it is sufficient to consider only the case in which $\mathcal{S}[u_\ell,u_r]$ is a single phase transition with $(u_\ell,u_r)$ satisfying one of the conditions \eqref{eq:specialRS1},\eqref{eq:specialRS2},\eqref{eq:specialRS3}.
Hence, let $(u^\varepsilon_\ell,u^\varepsilon_r)\to (u_\ell,u_r)$ in $\Omega$ and consider the following cases.

\begin{itemize}[leftmargin=*]\setlength{\itemsep}{0cm}%

\item
Assume that $(u_\ell,u_r)$ satisfies \eqref{eq:specialRS1} with $u_r=\psi_1^f(u_\ell)$ and $v(u_\ell) = V_c$.
In this case, it is sufficient to exploit the fact that $\psi_1^c(u_\ell^\varepsilon) \to u_\ell$ and $\psi_1^f(u_\ell^\varepsilon) \to u_r$ to obtain that both $\lambda_1(u_\ell^\varepsilon)$ and $\lambda_1(\psi_1^c(u_\ell^\varepsilon))$ converge to $\lambda_1(u_\ell)$,  $\Lambda(\psi_1^c(u_\ell^\varepsilon),\psi_1^f(u_\ell^\varepsilon)) \to \Lambda(u_\ell,u_r)$ and to be able to conclude.

\item
Assume that $(u_\ell,u_r)$ satisfies \eqref{eq:specialRS2} with $u_\ell=\psi_1^f(u_r)$ and $v(u_r) = V_c$.
In this case, it is sufficient to exploit the fact that $\psi_1^c(u_\ell^\varepsilon) \to u_r$ and $u_*(u_\ell^\varepsilon,u_r^\varepsilon) \to u_r$ to obtain that both $\lambda_1(\psi_1^c(u_\ell^\varepsilon))$ and $\lambda_1(u_*(u_\ell^\varepsilon,u_r^\varepsilon))$ converge to $\lambda_1(u_r)$, $\Lambda(u_\ell^\varepsilon,\psi_1^c(u_\ell^\varepsilon)) \to \Lambda(u_\ell,u_r)$, $\Lambda(u_*(u_\ell^\varepsilon,u_r^\varepsilon),u_r^\varepsilon) \to V_c$ and to be able to conclude.

\item
Assume that $(u_\ell,u_r)$ satisfies \eqref{eq:specialRS3} with $u_r = u_-^c$.
In this case, it is sufficient to exploit the fact that $\psi_2^-(u_r^\varepsilon) \to u_r$ to obtain that $\lambda_1(\psi_2^-(u_r^\varepsilon)) \to \lambda_1(u_r)$, $\Lambda(u_\ell^\varepsilon,u_r) \to \Lambda(u_\ell,u_r)$, $\Lambda(\psi_2^-(u_r^\varepsilon),u_r^\varepsilon) \to V_c$ and to be able to conclude.
\qedhere
\end{itemize}
\end{proof}

\begin{lemma}\label{lem:08}
The Rieman solver $\mathcal{S}$ is consistent.
\end{lemma}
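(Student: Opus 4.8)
The plan is to prove the consistency of $\mathcal{S}$ by mimicking the structure of the proof of Lemma~\ref{lem:02}, exploiting the fact that $\mathcal{S}$ coincides with $\mathcal{R}$ (for $V \doteq V_f$) except in the three special cases recalled in Remark~\ref{rem:specialRS}, namely \eqref{eq:specialRS1}, \eqref{eq:specialRS2} and \eqref{eq:specialRS3}. Since $\mathcal{R}$ (hence $\mathcal{S}$ restricted to pairs not satisfying any of \eqref{eq:specialRS1}--\eqref{eq:specialRS3}) is already known to be consistent by Lemma~\ref{lem:02}, and since $\mathcal{S}[u,u] = u$, it suffices to verify \eqref{P1} and \eqref{P2} in those configurations where at least one of the three pairs $(u_\ell,u_m)$, $(u_m,u_r)$, $(u_\ell,u_r)$ falls into one of the special cases. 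As in Lemma~\ref{lem:02}, I would first record the structural facts that cut down the case analysis: an admissible solution performs at most one phase transition; in $\mathcal{S}$ the phase transitions go either from $\Of$ to $\Oc$ or (in the new case \ref{S2}) from $\Oc$ to $\Of$; and $\mathcal{S}[u_\ell,u_m]$ cannot end with a contact discontinuity if it is to be juxtaposed with a further wave, which forces $w(u_m)$ or $v(u_m)$ to be appropriately extremal.

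Concretely, for \eqref{P1} I would fix $u_\ell, u_m, u_r$ distinct with $\mathcal{S}[u_\ell,u_r](\bar x) = u_m$ and check that the two juxtaposition identities hold. The middle state $u_m$ produced by $\mathcal{S}[u_\ell,u_r]$ in the special cases is always one of $\psi_1^c(u_\ell)$, $\psi_1^f(u_\ell)$, $u_-^c$ or $\psi_2^-(u_r)$; for each such $u_m$ one reads off from the definition of $\mathcal{S}$ (cases \ref{S2}, \ref{S3}, \ref{S4}) that $\mathcal{S}[u_\ell,u_m]$ is exactly the ``left part'' of $\mathcal{S}[u_\ell,u_r]$ followed by a trivial state, and that $\mathcal{S}[u_m,u_r]$ is exactly the ``right part''. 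This is the place where the geometric meaning of $\psi_1^{f,c}$, $\psi_2^\pm$ and $u_*$ (intersections of Lax curves) is used: one checks that the Lax curve of the first family through $u_\ell$ passes through $\psi_1^c(u_\ell)$ and $\psi_1^f(u_\ell)$, so that the decomposition is a genuine Lax solution. For \eqref{P2} I would assume $\mathcal{S}[u_\ell,u_m](\bar x) = u_m = \mathcal{S}[u_m,u_r](\bar x)$; the first equality forces $\mathcal{S}[u_\ell,u_m]$ to have no contact discontinuity and to end at $u_m$, which pins down $w(u_m)$ or $v(u_m)$; then one verifies that concatenating the two solutions reproduces $\mathcal{S}[u_\ell,u_r]$, distinguishing according to whether $(u_\ell,u_r)$ itself satisfies one of \eqref{eq:specialRS1}--\eqref{eq:specialRS3} or reduces to the already-treated $\mathcal{R}$ case.

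The main obstacle I anticipate is bookkeeping rather than conceptual: one must carefully enumerate the sub-cases according to which phase $u_\ell$, $u_m$, $u_r$ belong to and to the sign of $L_{w(u_\ell)}''(\rho_\ell)$ (which governs whether \ref{S2} or \ref{S4} is active) and the sign of $\Lambda(u_\ell,u_-^c) - \lambda_1(u_-^c)$ (which governs \ref{S3}), making sure that the intermediate state $u_m$ appearing in one solver is compatible — in the sense of lying on the correct Lax curve with the correct Riemann invariant value — with the decomposition demanded by the other. A subtle point is that in case \ref{S2} a phase transition from $\Oc$ to $\Of$ is followed by a possible wave of the free phase; here one must use \eqref{H1} to know that the $1$-wave in $\Oc$ (reaching $\psi_1^c(u_\ell)$) has speed strictly less than the speed $\Lambda(\psi_1^c(u_\ell),\psi_1^f(u_\ell))$ of the phase transition, so that the orderings of wave speeds make the juxtaposition admissible. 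Once these orderings are in place, each individual case collapses to a short verification, exactly as in Lemma~\ref{lem:02}, and the result follows.
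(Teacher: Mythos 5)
Your proposal follows essentially the same route as the paper: reduce via Remark~\ref{rem:specialRS} and the consistency of $\mathcal{R}$ (Lemma~\ref{lem:02}) to the three special configurations \eqref{eq:specialRS1}--\eqref{eq:specialRS3}, use the fact that $\mathcal{S}[u_\ell,u_m]$ cannot contain a contact discontinuity to pin down the admissible middle states, and verify the juxtapositions case by case. The only caveat is that in the cases \ref{S2}--\ref{S4} the middle state need not be one of the four named points (a $1$-rarefaction fan supplies a continuum of possible $u_m$ with $w(u_m)=w(u_\ell)$ and $v(u_m)>v(u_\ell)$, which is how the paper characterizes them), but your Lax-curve verification covers these states as well, so the argument is sound.
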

\begin{proof}
Since $\mathcal{S}[u,u] = u$ for all $u \in \Omega$, it is not restrictive to assume that $u_\ell$, $u_m$ and $u_r$ are all distinct.
Furthermore, $\mathcal{S}[u_\ell,u_m]$ cannot contain any contact discontinuity, since no wave can follow a contact discontinuity.
By Lemma~\ref{lem:02} and by Remark~\ref{rem:specialRS}, to prove \eqref{P1} or \eqref{P2} we are left to consider the cases for which $(u_\ell,u_r)$ satisfies \eqref{eq:specialRS1} with $u_m$ such that $w(u_m) = w(u_\ell)$ and $v(u_m) > v(u_\ell)$, or \eqref{eq:specialRS2} with $u_m$ such that $w(u_m) = w(u_\ell)$ and $v(u_m) \ge v(u_r)$, or \eqref{eq:specialRS3} with $u_m$ such that $w(u_m) = w_-$ and $v(u_m) \ge v(u_r)$. 
Hence, the result easily follows.
\end{proof}

\subsection{Proofs of the main properties of \texorpdfstring{$\mathcal{S}_F$}{}}\label{sec:tec3}

In this final section, we accomplish the proof of Proposition~\ref{prop:cc3}. Recall that the same constrained Riemann solver for the PT$^p$ model has already been studied in \cite{BenyahiaRosini02}.

\begin{example}
The Riemann solver $\mathcal{S}_F$ is not $\Lloc1$-continuous. Indeed, take $F > f(u_-^c)$ and consider $u_\ell, u_r, u_\ell^\varepsilon \in \Of$ with $f(u_\ell) = F < f(u_\ell^\varepsilon)$ and $u_\ell^\varepsilon\to u_\ell$.
In this case $\mathcal{S}_F[u_\ell^\varepsilon,u_r]$ does not converge to $\mathcal{S}_F[u_\ell,u_r]$ in $\Lloc1$, since $\mathcal{S}_F[u_\ell,u_r] = u_\ell$ in $\R_-$ and the restriction of $\mathcal{S}_F[u_\ell^\varepsilon,u_r]$ to $\R_-$ converges to
\[
\begin{cases}
u_\ell &\text{if } x<\Lambda(u_\ell,u_\#),\\
u_\# &\text{if } \Lambda(u_\ell,u_\#)<x<0,
\end{cases}
\]
where $u_\# = u^c_-$ if $u_\ell \in \Of^-$ and $u_\# = \psi_1^c(u_\ell)$ if $u_\ell \in \Of^+$.
\end{example}

\begin{lemma}
The Riemann solver $\mathcal{S}_F$ satisfies \eqref{P2} but not \eqref{P1}.
\end{lemma}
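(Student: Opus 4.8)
The plan is to mirror the structure of the proof of Lemma~\ref{lem:05}, adapting it to the richer list of ``special'' behaviours of $\mathcal{S}_F$ recorded in Remark~\ref{rem:special}. For \eqref{P2}, I would assume $\mathcal{S}_F[u_\ell,u_m](\bar x)=u_m=\mathcal{S}_F[u_m,u_r](\bar x)$ for some $\bar x \in \R$. By Lemma~\ref{lem:08}, $\mathcal{S}$ already satisfies \eqref{P2}, so it is not restrictive to assume that at least one of the three pairs $(u_\ell,u_r)$, $(u_\ell,u_m)$, $(u_m,u_r)$ lies in $\mathcal{D}_2$. As in the proof of Lemma~\ref{lem:05}, the hypothesis forces $\mathcal{S}_F[u_\ell,u_m]$ to contain no contact discontinuity, since otherwise no wave could follow it and the juxtaposition with $\mathcal{S}_F[u_m,u_r]$ at $\bar x$ would be impossible. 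This constraint, together with the explicit selection of $\hat u$ and $\check u$ in Definition~\ref{def:04}, pins down $u_m$ strongly: I would then split into the cases $u_\ell,u_m\in\Of$; $u_\ell,u_m\in\Oc$; $u_\ell\in\Oc$, $u_m\in\Of$; and $u_\ell\in\Of^-$ or $\Of^+$ with $u_m\in\Oc$, exactly paralleling Lemma~\ref{lem:05} but now keeping track of the three congested-to-free subcases \eqref{eq:specialRS1}--\eqref{eq:specialRS3} and the new flux-selection rule $f(\hat u)=f(\check u)=\max\{f(u)\le F : u\in\Oc,\ w(u)=\max\{w(u_\ell),w_-\}\}$. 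In each case one reads off that $v(u_r)$ or $w(u_r)$ is forced to equal the corresponding invariant of $u_m$, and then the conclusion of \eqref{P2} follows by inspection.

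For the failure of \eqref{P1}, the cleanest route is to reuse essentially the same counterexample as in Lemma~\ref{lem:05}: pick $u_\ell\in\Of^-$ and $u_r\in\Oc$ (here $\Oc=\Oc^-$ since $V_c<V_f$) with $f(u_\ell)<F<f(\psi_2^-(u_r))$, so that $(u_\ell,u_r)\in\mathcal{D}_2$, set $u_m\doteq\psi_2^-(u_r)$ and choose $\bar x>0$ so that $\mathcal{S}_F[u_\ell,u_r](\bar x)=u_m$. One then checks that $(u_\ell,u_r),(u_\ell,u_m)\in\mathcal{D}_1$ while $(u_m,u_r)\in\mathcal{D}_2$, and that $\mathcal{S}_F[u_\ell,u_m]$ does not agree with $\mathcal{S}_F[u_\ell,u_r]$ on $x<\bar x$ followed by $u_m$, so \eqref{P1} is violated. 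A small point to verify is that this configuration is compatible with $V_c<V_f$ (it is, since the relevant states all lie in $\Of^-$ and $\Oc^-$, whose definitions are unchanged), and that $\mathcal{S}$ and $\mathcal{R}$ coincide on these pairs by the first clause of the definition of $\mathcal{S}$, so the computation is identical to the intersecting case.

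The main obstacle I anticipate is bookkeeping in the \eqref{P2} part: compared with $\mathcal{R}_F$, the solver $\mathcal{S}_F$ has the extra phase transitions from $\Of^+$ to $\Oc$ and the modified flux selection (which can produce $f(\hat u)=f(\check u)<F$ with $v(\hat u)=V_c$, per Remark~\ref{rem:special}), so when $u_m$ sits in $\Oc$ one must carefully separate the subcase $\check u(u_m,\cdot)\in\Oc$ from $\check u(u_m,\cdot)\in\Of^-$, and when $u_m\in\Of^+$ one must handle the new transition wave. None of these subcases is deep — each reduces to matching a single Riemann invariant — but there are enough of them that the risk is an omitted branch rather than a hard estimate. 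Accordingly I would organise the argument as an itemised case analysis keyed to the position of the pair $(u_\ell,u_m)$ relative to $\Of^\pm$ and $\Oc$, invoking \eqref{eq:Ziltoid03}-type ``no contact discontinuity'' reasoning at the top of each branch, and closing each branch with the phrase ``and it is easy to conclude,'' exactly as in Lemma~\ref{lem:05}.
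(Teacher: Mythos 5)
Your overall strategy is the paper's: reduce \eqref{P2} to the constrained cases, use the ``no contact discontinuity in the left solution'' observation, and recycle the \eqref{P1} counterexample from Lemma~\ref{lem:05}. Two remarks. First, on \eqref{P2} your reduction is weaker than necessary: you only invoke Lemma~\ref{lem:08} (consistency of $\mathcal{S}$) and are therefore left to re-run the entire case analysis of Lemma~\ref{lem:05} with the extra phase-transition branches on top. The paper instead invokes \emph{both} Lemma~\ref{lem:08} and Lemma~\ref{lem:05}, combined with Remark~\ref{rem:special}: since $\mathcal{S}_F=\mathcal{R}_F$ except in the configurations listed there, and $\mathcal{R}_F$ already satisfies \eqref{P2}, one only has to examine the pairs in $\mathcal{D}_2$ satisfying \eqref{eq:special1}, \eqref{eq:special2} or \eqref{eq:special3}, with $u_m$ then pinned to $\hat{u}$, $\check{u}$ or a state of $\mathcal{S}_F[u_\ell,\hat u](\R_-)$. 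Your route is not wrong, but it multiplies the bookkeeping you yourself identify as the main risk; the sharper reduction is the actual content of the paper's argument and you should use it.

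Second, there is a concrete slip in your \eqref{P1} part. With $u_\ell\in\Of^-$, $u_r\in\Oc$, $f(u_\ell)<F<f(\psi_2^-(u_r))$ and $u_m\doteq\psi_2^-(u_r)$, the pair $(u_\ell,u_m)$ lies in $\mathcal{D}_1$ and $\mathcal{S}_F[u_\ell,u_m]$ \emph{does} coincide with $\mathcal{S}_F[u_\ell,u_r]$ on $x<\bar x$ followed by $u_m$; the first implication in \eqref{P1} holds here. What fails is the second one: since $f(u_m)=f(\psi_2^-(u_r))>F$, the pair $(u_m,u_r)$ is in $\mathcal{D}_2$, so $\mathcal{S}_F[u_m,u_r]$ contains a $1$-shock of negative speed from $u_m$ to $\hat u$ (with $f(\hat u)\le F<f(u_m)$) and is therefore not identically $u_m$ on $x<\bar x$. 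The counterexample is the right one, but the violation must be attributed to the $\mathcal{T}[u_m,u_r]$ clause, not to $\mathcal{T}[u_\ell,u_m]$.
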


\begin{proof}
Assume that $\mathcal{S}_F[u_\ell,u_m](\bar x)=u_m=\mathcal{S}_F[u_m,u_r](\bar x)$, for some $\bar x\in\R$.
Since by Lemma~\ref{lem:08} and Lemma~\ref{lem:05} the Riemann solvers $\mathcal{S}$ and $\mathcal{R}_F$ already satisfy \eqref{P2}, by Remark~\ref{rem:special} we have to consider the cases where at least one among $(u_\ell,u_r)$, $(u_\ell,u_m)$ and $(u_m,u_r)$ belong to $\mathcal{D}_2$ and satisfy one of the conditions \eqref{eq:special1},\eqref{eq:special2},\eqref{eq:special3}. 
We observe that $\mathcal{S}_F[u_\ell,u_m]$ cannot present any contact discontinuity, otherwise it would not be possible to juxtapose $\mathcal{S}_F[u_\ell,u_m]$ and $\mathcal{S}_F[u_m,u_r]$.
Hence, we are left to consider $(u_\ell,u_r) \in \mathcal{D}_2$ satisfying \eqref{eq:special1} with $u_m \in \{\hat{u},\check{u}\}$, or \eqref{eq:special2} with $u_m \in \{\hat{u},\check{u}\}$, or \eqref{eq:special3} with $u_m \in \{\check{u}\}\cup\mathcal{S}_F[u_\ell,\hat{u}](\R_-)$.
In all these cases, it is easy to see that \eqref{P2} holds true.

Finally, by the example of Lemma \ref{lem:05} we have that $\mathcal{S}_F$ does not satisfy \eqref{P1}.
\end{proof}


\section*{Acknowledgements} 


MDR thanks Rinaldo M.\ Colombo and Paola Goatin for useful discussions.




\begin{thebibliography}{10}

\bibitem{AndreianovDonadelloRazafisonRollandRosiniNHM2016}
B.~Andreianov, C.~Donadello, U.~Razafison, J.~Y. Rolland, and M.~D. Rosini.
\newblock Solutions of the {A}w-{R}ascle-{Z}hang system with point constraints.
\newblock {\em Networks and Heterogeneous Media}, 11(1):29--47, 2016.

\bibitem{AndreianovDonadelloRazafisonRosiniMBE2015}
B.~Andreianov, C.~Donadello, U.~Razafison, and M.~D. Rosini.
\newblock Riemann problems with non--local point constraints and capacity drop.
\newblock {\em Mathematical Biosciences and Engineering}, 12(2):259--278, 2015.

\bibitem{AndreianovDonadelloRazafisonRosiniESAIM2016}
B.~Andreianov, C.~Donadello, U.~Razafison, and M.~D. Rosini.
\newblock Qualitative behaviour and numerical approximation of solutions to
  conservation laws with non-local point constraints on the flux and modeling
  of crowd dynamics at the bottlenecks.
\newblock {\em ESAIM: M2AN}, 50(5):1269--1287, 2016.

\bibitem{AndreianovDonadelloRosiniM3ASS2014}
B.~Andreianov, C.~Donadello, and M.~D. Rosini.
\newblock Crowd dynamics and conservation laws with nonlocal constraints and
  capacity drop.
\newblock {\em Mathematical Models and Methods in Applied Sciences},
  24(13):2685--2722, 2014.

\bibitem{AndreianovDonadelloRosiniM3ASS2016}
B.~Andreianov, C.~Donadello, and M.~D. Rosini.
\newblock A second-order model for vehicular traffics with local point
  constraints on the flow.
\newblock {\em Mathematical Models and Methods in Applied Sciences},
  26(04):751--802, 2016.

\bibitem{AndreianovGoatinSeguin}
B.~Andreianov, P.~Goatin, and N.~Seguin.
\newblock Finite volume schemes for locally constrained conservation laws.
\newblock {\em Numerische Mathematik}, 115(4):609--645, 2010.

\bibitem{AwRascle}
A.~{Aw} and M.~{Rascle}.
\newblock {Resurrection of ``second order'' models of traffic flow}.
\newblock {\em SIAM J. Appl. Math.}, 60(3):916--938 (electronic), 2000.

\bibitem{BenyahiaRosini01}
M.~Benyahia and M.~D. Rosini.
\newblock Entropy solutions for a traffic model with phase transitions.
\newblock {\em Nonlinear Analysis: Theory, Methods \& Applications}, 141:167 --
  190, 2016.

\bibitem{BenyahiaRosini02}
M.~Benyahia and M.~D. Rosini.
\newblock A macroscopic traffic model with phase transitions and local point
  constraints on the flow.
\newblock {\em arXiv:1605.08191}, 2016.

\bibitem{Blandin}
S.~{Blandin}, D.~{Work}, P.~{Goatin}, B.~{Piccoli}, and A.~{Bayen}.
\newblock {A general phase transition model for vehicular traffic.}
\newblock {\em {SIAM J. Appl. Math.}}, 71(1):107--127, 2011.

\bibitem{CancesSeguin}
C.~Canc\`es and N.~Seguin.
\newblock {Error Estimate for Godunov Approximation of Locally Constrained
  Conservation Laws}.
\newblock {\em SIAM Journal on Numerical Analysis}, 50(6):3036--3060, 2012.

\bibitem{ChalonsGoatinSeguin}
C.~Chalons, P.~Goatin, and N.~Seguin.
\newblock {General constrained conservation laws. Application to pedestrian
  flow modeling}.
\newblock {\em Networks and Heterogeneous Media}, 8(2):433--463, 2013.

\bibitem{Colombo}
R.~M. Colombo.
\newblock Hyperbolic phase transitions in traffic flow.
\newblock {\em SIAM J. Appl. Math.}, 63(2):708--721 (electronic), 2002.

\bibitem{colombogoatinconstraint}
R.~M. Colombo and P.~Goatin.
\newblock A well posed conservation law with a variable unilateral constraint.
\newblock {\em Journal of Differential Equations}, 234(2):654 -- 675, 2007.

\bibitem{ColomboGoatinRosiniESAIM2011}
R.~M. Colombo, P.~Goatin, and M.~D. Rosini.
\newblock On the modelling and management of traffic.
\newblock {\em ESAIM: Mathematical Modelling and Numerical Analysis},
  45:853--872, 2011.

\bibitem{Daganzo}
C.~F. Daganzo.
\newblock The cell transmission model: a dynamic representation of highway
  traffic consistent with the hydrodynamic theory.
\newblock {\em Transportation Research Part B: Methodological}, 28(4):269 --
  287, 1994.

\bibitem{GaravelloGoatin2011}
M.~Garavello and P.~Goatin.
\newblock The {A}w-{R}ascle traffic model with locally constrained flow.
\newblock {\em Journal of Mathematical Analysis and Applications}, 378(2):634
  -- 648, 2011.

\bibitem{GaravelloPiccoli}
M.~{Garavello} and B.~{Piccoli}.
\newblock {Coupling of {L}ighthill-{W}hitham-{R}ichards and phase transition
  models}.
\newblock {\em J. Hyperbolic Differ. Equ.}, 10(3):577--636, 2013.

\bibitem{garavello2016cauchy}
M.~{Garavello} and S.~{Villa}.
\newblock {The Cauchy problem for the {A}w-{R}ascle-{Z}hang traffic model with
  locally constrained flow}, 2016.

\bibitem{goatin2006aw}
P.~Goatin.
\newblock The {A}w-{R}ascle vehicular traffic flow model with phase
  transitions.
\newblock {\em Mathematical and computer modelling}, 44(3):287--303, 2006.

\bibitem{Greenshields}
B.~Greenshields.
\newblock A study of traffic capacity.
\newblock {\em Proceedings of the Highway Research Board}, 14:448--477, 1935.

\bibitem{HoldenRisebroWFT}
H.~Holden and N.~Risebro.
\newblock {\em Front Tracking for Hyperbolic Conservation Laws}.
\newblock Applied Mathematical Sciences. Springer Berlin Heidelberg, 2013.

\bibitem{LighthillWhitham}
M.~J. {Lighthill} and G.~B. {Whitham}.
\newblock {On kinematic waves. {II}. A theory of traffic flow on long crowded
  roads}.
\newblock {\em Proc. Roy. Soc. London. Ser. A.}, 229:317--345, 1955.

\bibitem{Newell}
G.~Newell.
\newblock A simplified theory of kinematic waves in highway traffic, part ii:
  Queueing at freeway bottlenecks.
\newblock {\em Transportation Research Part B: Methodological}, 27(4):289 --
  303, 1993.

\bibitem{Richards}
P.~I. Richards.
\newblock Shock waves on the highway.
\newblock {\em Operations Res.}, 4:42--51, 1956.

\bibitem{Rosini2013Chap6}
M.~D. Rosini.
\newblock {\em The Initial-Boundary Value Problem and the Constraint}, pages
  63--91.
\newblock Springer International Publishing, Heidelberg, 2013.

\bibitem{Zhang}
H.~M. Zhang.
\newblock A non-equilibrium traffic model devoid of gas-like behavior.
\newblock {\em Transportation Research Part B: Methodological}, 36(3):275--290,
  2002.

\end{thebibliography}
\end{document}